\newtheorem{thm}{Theorem}[section]
\newtheorem{prp}[thm]{Proposition}
\newtheorem{cor}[thm]{Corollary}
\newtheorem{lem}[thm]{Lemma}
\newtheorem{prop}[thm]{Proposition}
\newtheorem{obs}[thm]{Observation}
\theoremstyle{definition}
\newtheorem{defn}[thm]{Definition}
\newtheorem{notn}[thm]{Notation}
\theoremstyle{remark}
\newtheorem{rmk}[thm]{Remark}
\newtheorem{example}[thm]{Example}
\newcommand{\inv}{^{-1}}
\def\ceil[#1]{\lceil{#1}\rceil}
\def\floor[#1]{\lfloor{#1}\rfloor}
\def\bceil[#1]{\big\lceil{#1}\big\rceil}
\def\bfloor[#1]{\big\lfloor{#1}\big\rfloor}
\def\Bceil[#1]{\Big\lceil{#1}\Big\rceil}
\def\Bfloor[#1]{\Big\lfloor{#1}\Big\rfloor}
\newcommand{\One}[1]{\mathbf{1}_{#1}}
\newcommand{\NN}{\mathbb{N}}
\newcommand{\RR}{\mathbb{R}}
\newcommand{\ZZ}{\mathbb{Z}}
\newcommand{\tpi}{\wilde{\pi}}
\newcommand{\tphi}{\wilde{\varphi}}
\newcommand{\lt}{\operatorname{lt}}
\newcommand{\Aut}{\operatorname{Aut}}
\newcommand{\tgrphlim}{{
    \renewcommand{\leftarrow}{\leftharpoondown}
    \varprojlim
}}
\numberwithin{equation}{section}
\newcommand{\thmref}[1]{Theorem~\ref{#1}}
\newcommand{\propref}[1]{Proposition~\textup{\ref{#1}}}
\newcommand{\corref}[1]{Corollary~\textup{\ref{#1}}}
\newcommand{\lemref}[1]{Lemma~\textup{\ref{#1}}}
\newcommand{\obsref}[1]{Observation~\textup{\ref{#1}}}
\newcommand{\GG}{\mathcal{G}}
\newcommand{\HH}{\mathcal H}
\newcommand{\NNN}{\mathcal{N}} 
\newcommand{\CCC}{\mathcal{C}}
\newcommand{\F}{\mathbb{F}}
\newcommand{\R}{\mathbb{R}}
\newcommand{\N}{\mathbb N}
\newcommand{\minus}{\setminus} 
\renewcommand{\bar}{\overline} 
\newcommand{\wilde}{\widetilde} 
\newcommand{\midtext}[1]{\quad\text{#1}\quad}
\newcommand{\algcov}[1]{\ensuremath{\mathbf{AlgCov}(#1)}}
\newcommand{\topcov}[1]{\ensuremath{\mathbf{TopCov}(X_{#1})}}
\begin{document}
\title[Topological realizations of $k$-graphs]{Topological realizations and fundamental groups of higher-rank graphs}

\author[Kaliszewski]{S. Kaliszewski}
\address{School of Mathematical and Statistical Sciences, Arizona
State University, Tempe, AZ 85287}
\email{kaliszewski@asu.edu}
\author[Kumjian]{Alex Kumjian}
\address{Department of Mathematics (084), University of Nevada, Reno NV 89557-0084}
\email{alex@unr.edu}
\author[Quigg]{John Quigg}
\address{School of Mathematical and Statistical Sciences, Arizona
State University, Tempe, AZ 85287}
\email{quigg@asu.edu}
\author[Sims]{Aidan Sims}
\address{School of Mathematics and Applied Statistics, University of Wollongong, NSW 2522, Australia}
\email{asims@uow.edu.au}

\subjclass[2000]{Primary 05C20; Secondary 14H30, 18D99}

\keywords{$k$-graph, fundamental group, CW-complex, functor, covering, projective limit}

\thanks{This research was supported by the Australian Research Council. Part of the work was completed
while the second author was employed at the University of Wollongong on the ARC grants DP0984339
and DP0984360.}

\date{\today}

\begin{abstract}
We investigate topological realizations of higher-rank graphs. We show that the fundamental group
of a higher-rank graph coincides with the fundamental group of its topological realization. We also
show that topological realization of higher-rank graphs is a functor, and that for each higher-rank
graph $\Lambda$, this functor determines a category equivalence between the category of coverings
of $\Lambda$ and the category of coverings of its topological realization. We discuss how
topological realization relates to two standard constructions for $k$-graphs: projective
limits and crossed products by finitely generated free abelian groups.
\end{abstract}

\maketitle

\section{Introduction}\label{sec:intro}

Higher-rank graphs are higher-dimensional analogues of directed graphs introduced by Kumjian and
Pask in  \cite{KumjianPask:NYJM00}. Their motivation was the study of associated $C^*$-algebras as
common generalizations of the graph $C^*$-algebras of \cite{KumjianPaskEtAl:JFA97} and the
higher-rank Cuntz-Krieger algebras of \cite{RobertsonSteger:JRAM99}.

In \cite{KumjianPask:NYJM00}, Kumjian and Pask described \emph{skew products} of $k$-graphs by
group-valued functors $c$. They showed that if $\Lambda$ is a $k$-graph and $c : \Lambda \to G$ is
a functor into an abelian group, then the $C^*$-algebra associated to the skew-product graph
$\Lambda \times_c G$ is isomorphic to the crossed product of $C^*(\Lambda)$ by an induced action
$\wilde{c}$ of the dual group $\widehat{G}$.

Pask, Quigg and Raeburn extended this result to non-abelian groups \cite{pqr:groupoid, pqr:cover}.
Generalizing results of \cite{DPR} for directed graphs, they showed that if $c : \Lambda \to G$ is
a functor into any discrete group, and $H$ is any subgroup of $G$, then the $C^*$-algebra of the
relative skew product $\Lambda \times_c G/H$ is isomorphic to a  restricted crossed product of
$C^*(\Lambda)$ by a coaction  of $G$. They showed how to interpret relative skew products as
\emph{coverings} of $k$-graphs, and they showed that every covering arises this way by introducing
the fundamental group of a $k$-graph $\Lambda$ and showing that $G$ can be taken to be
$\pi_1(\Lambda)$ and $H$ can be taken such that $H \cong \pi_1(\Lambda \times_c G/H)$. They also
indicated \cite[Section~6]{pqr:groupoid} how one might construct a topological realization of a
$k$-graph by gluing open cells into the interiors of commuting cubes in the category, and indicated
that one would expect the fundamental group of the resulting space to coincide with the fundamental
group of the $k$-graph.

In this paper, we make this precise. We define the topological realization $X_\Lambda$ of a
$k$-graph $\Lambda$ and show by example that a number of standard surfaces arise from this
construction applied to $2$-graphs. We then show that the assignment $\Lambda \to X_\Lambda$
preserves fundamental groups. We go on to show that each $k$-graph morphism $\varphi : \Lambda \to
\Gamma$ induces a continuous map $\wilde{\varphi} : X_\Lambda \to X_\Gamma$ and that the pair
$(\Lambda \mapsto X_\lambda, \varphi \mapsto \wilde{\varphi})$ is a functor from the category of
$k$-graphs with $k$-graph morphisms to the category of topological spaces with continuous maps. The
situation is particularly nice for the coverings studied in \cite{pqr:cover}: for each $k$-graph
$\Lambda$, the assignment $\varphi \mapsto \wilde{\varphi}$ determines a category equivalence
between the category of algebraic coverings of $\Lambda$ and the category of topological coverings
of $X_\Lambda$ that takes a universal covering of $\Lambda$ to  a universal covering of
$X_\Lambda$.

We finish off by describing how our construction behaves with respect to two existing constructions
from the theory of $k$-graphs. Firstly, by analogy with our construction for discrete $k$-graphs,
we propose a notion of topological realization for a topological $k$-graph in the sense of Yeend
\cite{YeendTopGraph}. Given a sequence of finite-to-one coverings $p_n : \Lambda_n \to
\Lambda_{n-1}$ of $k$-graphs, the projective limit $\varprojlim (\Lambda_n, p_n)$ is a topological
$k$-graph \cite{PQS}. We show that the topological realization $X_{\varprojlim (\Lambda_n, p_n)}$
is homeomorphic to the projective limit $\varprojlim(X_{\Lambda_n}, \wilde{p}_n)$, and in
particular that $\pi_1(X_{\varprojlim (\Lambda_n, p_n)}) \cong \varprojlim(\pi_1(\Lambda_n),
(p_n)^*)$. Secondly, we consider the crossed products of $k$-graphs studied in
\cite{FarthingPaskEtAl:HJM09}, and demonstrate that if $\alpha$ is an action of $\ZZ^l$ on a
$k$-graph $\Lambda$, then the topological realization $X_{\Lambda \times_\alpha \ZZ^l}$ of the
crossed-product $k$-graph is homeomorphic to the mapping torus $M(\wilde{\alpha})$ for the induced
homeomorphism $\wilde{\alpha}$ of $X_\Lambda$.

\section{Background}\label{sec:background}

In this paper $\NN$ denotes the natural numbers, which we take to include 0 and regard as a monoid
under addition. For $k \ge 1$ we regard $\NN^k$, the set of $k$-tuples from $\NN$, as a monoid
under pointwise operations. When convenient, we will also regard it as a category with a single
object. We denote the identity element by $0$, and we write $\One{k}$ for the element
$(1,1,\dots,1) \in \NN^k$. We denote the canonical generators of $\NN^k$ by $e_1, \dots, e_k$, and
for $n \in \NN^k$ we write $n_1, \dots, n_k$ for the coordinates of $n$; that is $n = (n_1, n_2,
\dots, n_k) = \sum^k_{i=1} n_i e_i$. We write $|n|$ for $\sum_{i=1}^k n_i$.

For $m,n \in \NN^k$, we write $m \le n$ if $m_i \le n_i$ for all $i$, and $m < n$ if $m \le n$ and
$m \not= n$; in particular, $m < n$ does not mean that $m_i < n_i$ for all $i$. We write $m \vee n$
for the coordinatewise maximum of $m$ and $n$; we then have $m,n \le m \vee n$.

As in \cite{KumjianPask:NYJM00}, a \emph{$k$-graph} is a countable small category $\Lambda$ endowed
with a functor $d : \Lambda \to \NN^k$ satisfying the following factorization property: for all
$\lambda \in \Lambda$ and $m,n \in \NN^k$ such that $d(\lambda) = m+n$ there exist unique elements
$\mu \in d^{-1}(m)$ and $\nu \in d^{-1}(n)$ such that $\lambda = \mu\nu$. We write $\Lambda^n$ for
$d^{-1}(n)$. The map $o \mapsto \operatorname{id}_o$ is a bijection between the objects of
$\Lambda$ and the elements of $\Lambda^0$. We use this to regard the codomain and domain maps on
$\Lambda$ as maps $r,s : \Lambda \to \Lambda^0$, and observe that $\mu$ and $\nu$ are composable if
$s(\mu) = r(\nu)$. We adopt the following notational convention of \cite{pqr:groupoid} for
$k$-graphs. Given $\lambda \in \Lambda$ and $S \subseteq \Lambda$, we write $\lambda S =
\{\lambda\mu : \mu \in S, r(\mu) = s(\lambda)\}$ and $S\lambda = \{\mu\lambda : \mu \in S, s(\mu) =
r(\lambda)\}$. In particular, if $v \in \Lambda^0$ then $vS = r^{-1}(v) \cap S$ and $Sv = s^{-1}(v)
\cap S$.

If $m \le n \le l \in \NN^k$ and $\lambda \in \Lambda^l$, then two applications of the
factorization property show that there exist unique paths $\lambda' \in \Lambda^m$, $\lambda'' \in
\Lambda^{n-m}$ and $\lambda''' \in \Lambda^{l-n}$ such that $\lambda =
\lambda'\lambda''\lambda'''$. We define $\lambda(m, n) = \lambda''$. Since $\lambda =
r(\lambda)\lambda' (\lambda''\lambda''')$ we then have $\lambda(0,m) = \lambda'$ and similarly
$\lambda(n,l) = \lambda'''$.

We emphasize that, while many other papers on $k$-graphs require that $\Lambda$ be finitely-aligned
and/or have no sources, we make no such assumptions in this paper, though many of our key examples
are in fact row-finite.

\section{The topological realization of a higher-rank graph}\label{sec:topological realization}

Let $\Lambda$ be a $k$-graph. Given $t \in \RR^k$, we will write $\ceil[t]$ for the least element
of $\ZZ^k$ which is coordinatewise greater than  or equal to $t$ and $\floor[t]$ for the greatest element of
$\ZZ^k$ which is coordinatewise less than or equal to $t$. Observe that $\floor[t] \le t \le \ceil[t] \le
\floor[t] + \One{k}$ for all $t \in \RR^k$.

Given $p \le q \in \NN^k$, we denote by $[p,q]$ the \emph{closed interval} $\{t \in \RR^k : p \le t
\le q\}$, and we denote by $(p,q)$ the \emph{relatively open interval} $\{t \in [p,q] : p_i < t_i <
q_i\text{ whenever } p_i < q_i\}$. Observe that $(p,q)$ is not open in $\RR^k$ unless $p_i < q_i$
for all $i$, but it is open as a subspace of $[p,q]$. The set $(p,q)$ is never empty: in
particular, if $p = q$, then $(p,q) = [p,q] = \{p\}$. In general, as a subset of $\RR^k$, the
dimension of $(p,q)$ is $|\{i \le k : p_i < q_i\}|$. If $p_i < q_i$ then the
$i$\textsuperscript{th}-coordinate projection of $(p,q)$ is $(p_i, q_i)$, and if $p_i = q_i$ then
the $i$\textsuperscript{th}-coordinate projection of $(p,q)$ is $\{p_i\}$.
\begin{rmk}\label{rmk:box-top}
Let $m \in \NN^k$.  If $m \le \One{k}$, then for all $t \in (0, m)$, $\floor[t]=0$ and $\ceil[t] = m$.
\end{rmk}

We define a relation on the topological disjoint union $\bigsqcup_{\lambda \in \Lambda} \{\lambda\}
\times [0,d(\lambda)]$ by
\begin{equation}\label{eq:equiv rel}
(\mu, s) \sim (\nu, t) \quad\iff\quad \mu(\floor[s], \ceil[s])
    = \nu(\floor[t], \ceil[t])\text{ and } s - \floor[s] = t - \floor[t].
\end{equation}
It is straightforward to see that this is an equivalence relation.

\begin{defn}
Let $\Lambda$ be a $k$-graph. With notation as above, we define the \emph{topological realization}
$X_\Lambda$ of $\Lambda$ to be the quotient space
\[
\Big(\bigsqcup_{\lambda \in \Lambda} \{\lambda\} \times [0, d(\lambda)]\Big)\Big/\sim
\]
\end{defn}

The following alternative characterization of the equivalence relation $\sim$ will simplify
arguments later in the paper.

\begin{lem}\label{lem:sim-plified}
The  relation $\sim$ is generated as an equivalence relation by the relation
\begin{equation}\label{eq:sim generator}
    \big\{\big((\alpha\lambda\beta, t + d(\alpha)), (\lambda,t)\big) : d(\lambda) \le \One{k}\text{ and }t \in [0, d(\lambda)]\big\}.
\end{equation}
\end{lem}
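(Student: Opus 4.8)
The plan is to prove equality of two equivalence relations on $\bigsqcup_{\lambda \in \Lambda}\{\lambda\}\times[0,d(\lambda)]$ by a double inclusion. Write $\approx$ for the equivalence relation generated by the relation in~\eqref{eq:sim generator}. I would first verify $\approx\,\subseteq\,\sim$, which amounts to checking that every generating pair already lies in $\sim$, and then establish the reverse inclusion $\sim\,\subseteq\,\approx$ by a reduction-to-canonical-form argument.

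For $\approx\,\subseteq\,\sim$, fix a generating pair $\big((\alpha\lambda\beta,\,t+d(\alpha)),(\lambda,t)\big)$ with $d(\lambda)\le\One{k}$ and $t\in[0,d(\lambda)]$, and set $\mu=\alpha\lambda\beta$ and $s=t+d(\alpha)$. Since $d(\alpha)\in\NN^k$, floor and ceiling commute with this integer translation, so $\floor[s]=\floor[t]+d(\alpha)$ and $\ceil[s]=\ceil[t]+d(\alpha)$; in particular $s-\floor[s]=t-\floor[t]$, which is the second condition in~\eqref{eq:equiv rel}. For the first condition I would use the factorization property: because $t\in[0,d(\lambda)]$ forces $0\le\floor[t]\le\ceil[t]\le d(\lambda)$, the portion of $\mu=\alpha\lambda\beta$ from $\floor[t]+d(\alpha)$ to $\ceil[t]+d(\alpha)$ lies wholly inside the $\lambda$-block, so $\mu(\floor[s],\ceil[s])=\lambda(\floor[t],\ceil[t])$. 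Hence the pair lies in $\sim$, and since $\sim$ is an equivalence relation it contains the one generated by~\eqref{eq:sim generator}.

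The reverse inclusion is the crux, and the key observation is that a \emph{single} generating move already collapses each point to a canonical representative. Given any $(\mu,s)$, set $\alpha=\mu(0,\floor[s])$, $\lambda=\mu(\floor[s],\ceil[s])$ and $\beta=\mu(\ceil[s],d(\mu))$; these are legitimate since $0\le\floor[s]\le\ceil[s]\le d(\mu)$, and moreover $d(\lambda)=\ceil[s]-\floor[s]\le\One{k}$ with $s-\floor[s]\in[0,d(\lambda)]$. Taking the parameter in~\eqref{eq:sim generator} to be $s-\floor[s]$, so that $(s-\floor[s])+d(\alpha)=s$ and $\alpha\lambda\beta=\mu$, exhibits $\big((\mu,s),(\mu(\floor[s],\ceil[s]),\,s-\floor[s])\big)$ as a generating pair; hence $(\mu,s)\approx(\mu(\floor[s],\ceil[s]),\,s-\floor[s])$. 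Now suppose $(\mu,s)\sim(\mu',s')$. By~\eqref{eq:equiv rel} this says exactly that $\mu(\floor[s],\ceil[s])=\mu'(\floor[s'],\ceil[s'])$ and $s-\floor[s]=s'-\floor[s']$, i.e.\ the two canonical representatives coincide; therefore $(\mu,s)\approx(\mu(\floor[s],\ceil[s]),\,s-\floor[s])=(\mu'(\floor[s'],\ceil[s']),\,s'-\floor[s'])\approx(\mu',s')$, giving $\sim\,\subseteq\,\approx$.

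I expect the only real obstacle to be the verification, in the first inclusion, that $\mu(\floor[s],\ceil[s])=\lambda(\floor[t],\ceil[t])$: this rests on the ``segment of a segment'' identity $(\alpha\gamma\beta)(d(\alpha)+p,\,d(\alpha)+q)=\gamma(p,q)$ for $0\le p\le q\le d(\gamma)$, which follows from the uniqueness clause of the factorization property but should be recorded carefully. Everything else is routine bookkeeping with floors, ceilings, and the constraint $d(\lambda)\le\One{k}$.
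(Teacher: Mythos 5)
Your proposal is correct and takes essentially the same route as the paper's proof: the crux---reducing each point to the canonical representative $\big(\mu(\floor[s],\ceil[s]),\, s-\floor[s]\big)$ by a single generating move and then observing that $\sim$-equivalent points have identical canonical representatives---is exactly the paper's argument. The only difference is one of detail: the paper dismisses the forward inclusion as immediate from the definition of $\sim$, whereas you verify it explicitly via the floor/ceiling translation identities and the segment-of-a-segment identity $(\alpha\gamma\beta)(d(\alpha)+p,\,d(\alpha)+q)=\gamma(p,q)$, which is harmless extra care.
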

\begin{proof}
The relation~\eqref{eq:sim generator} is contained in $\sim$ by definition of the latter. Now
suppose that $(\mu,s) \sim (\nu,t)$. We must show that $\big((\mu,s), (\nu,t)\big)$ belongs to the
equivalence relation generated by~\eqref{eq:sim generator}. Let
\[
\alpha_\mu = \mu(0, \floor[s]),\quad
        \lambda_\mu = \mu(\floor[s], \ceil[s]), \quad\text{and}\quad
        \beta_\mu = \mu(\ceil[s], d(\mu)),
\]
and similarly for $\nu$.
By definition of $\sim$, we have $(\lambda_\mu, s - \floor[s]) = (\lambda_\nu, t - \floor[t])$.
Since $\big((\mu,s), (\lambda_\mu, s - \floor[s])\big)$ and $\big((\nu,t), (\lambda_\nu, t -
\floor[t])\big)$ belong to~\eqref{eq:sim generator}, $\big((\mu,s), (\nu,t)\big)$ belongs to the
equivalence relation generated by~\eqref{eq:sim generator}.
\end{proof}

\begin{notn}
Let $[\lambda, t]$ denote the equivalence class of an element  $(\lambda, t)$. If $u \in \Lambda^0$
we often write $u$ in place of $[u, 0] \in X_\Lambda$ to simplify notation.

For each $m \le \One{k}$ and each $\lambda \in \Lambda^m$, define
\[
    Q_\lambda = \{[\lambda,t] : t \in (0, m)\} \subset X_\Lambda
\]
and let $\overline{Q}_\lambda$ denote its closure in $X_\Lambda$.
We call $Q_\lambda$ the \emph{open cube} associated to $\lambda$ and $\overline{Q}_\lambda$ the \emph{closed cube} associated to $\Lambda$.
\end{notn}

\begin{lem}\label{lem:CW structure}
Let $\Lambda$ be a $k$-graph. Then $X_\Lambda = \bigcup_{m \le \One{k}} \bigcup_{\lambda \in
\Lambda^m} Q_\lambda$, and $Q_\lambda \cap Q_\mu = \emptyset$ for distinct
$\lambda,\mu{}\in \bigcup_{m \le \One{k}} \Lambda^m$. For each $m \le \One{k}$ and each
$\lambda \in \Lambda^m$, the map $[\lambda,t] \mapsto t$ is a homeomorphism of $Q_\lambda$ onto
$(0,m) \subset \RR^k$, so $Q_\lambda$ is homeomorphic to the open unit cube in $\RR^{|m|}$. Further
$\overline{Q}_\lambda = \{[\lambda,t] : 0 \le t \le m\}$.

Define $X_\Lambda^0 = \bigcup_{v \in \Lambda^0} Q_v$ and recursively define
\[
X_\Lambda^{r+1} = X_\Lambda^r \cup \Big(\bigcup_{d(\lambda) \le \One{k}, |\lambda| = {r+1}} Q_\lambda\Big).
\]
Then a subset $U$ of $X_\Lambda$ is open if and only if $U \cap X^r_\Lambda$ is relatively open for
each $r \le k$.
\end{lem}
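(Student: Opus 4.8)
The plan is to establish the five assertions in order, with Remark~\ref{rmk:box-top} and the defining relation~\eqref{eq:equiv rel} as the workhorses. For the covering statement I would begin with an arbitrary class $[\lambda,t] \in X_\Lambda$ and use the factorization property to form $\lambda(\floor[t],\ceil[t]) \in \Lambda^{\ceil[t]-\floor[t]}$, where $\ceil[t]-\floor[t] \le \One{k}$. Checking~\eqref{eq:equiv rel} directly shows $(\lambda,t) \sim \big(\lambda(\floor[t],\ceil[t]),\, t-\floor[t]\big)$, since the floor and ceiling of $t-\floor[t]$ are $0$ and $\ceil[t]-\floor[t]$ respectively; writing $m=\ceil[t]-\floor[t]$ and $s=t-\floor[t]$ one sees $s \in (0,m)$, so $[\lambda,t] \in Q_{\lambda(\floor[t],\ceil[t])}$. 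For disjointness, suppose $[\lambda,s]=[\mu,t]$ with $s \in (0,m)$, $t \in (0,n)$ and $m,n \le \One{k}$. Remark~\ref{rmk:box-top} gives $\floor[s]=\floor[t]=0$, $\ceil[s]=m$ and $\ceil[t]=n$, so~\eqref{eq:equiv rel} forces $\lambda=\lambda(0,m)=\mu(0,n)=\mu$; thus distinct cubes are disjoint.

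The cube homeomorphism is where I expect the real work to lie. The map $g\colon (0,m)\to Q_\lambda$, $t\mapsto[\lambda,t]$, is continuous as the composite of $t\mapsto(\lambda,t)$ with the quotient map, and it is a bijection because Remark~\ref{rmk:box-top} collapses~\eqref{eq:equiv rel} to $s=t$ for parameters in $(0,m)$. The obstacle is that $Q_\lambda$ carries the subspace topology and is \emph{not} open in $X_\Lambda$ whenever $|m|<k$, since its closure meets lower-dimensional cubes; hence continuity of $g^{-1}$ must be proved by hand. I would argue locally: fix $t^0\in(0,m)$ and use Lemma~\ref{lem:sim-plified} to describe the class of $(\lambda,t^0)$ explicitly as $\big\{(\alpha\lambda\beta,\,t^0+d(\alpha))\big\}$. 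Around each such representative I would place a box that is a genuine open interval in every direction $i$ with $m_i=1$ and that is shrunk to a small symmetric interval about the integer $d(\alpha)_i$ in every direction with $m_i=0$; the $q$-image $O$ of the union of these boxes should be open, and intersecting $O$ with $Q_\lambda$ kills the degenerate directions and recovers precisely $g$ of a neighborhood of $t^0$. The delicate point, and the one I would spend the most care on, is verifying that the union of boxes is \emph{saturated}, i.e.\ closed under $\sim$; this is exactly the local structure encoded by the generators~\eqref{eq:sim generator}, and the same analysis should simultaneously yield that $X_\Lambda$ is Hausdorff. An alternative I would keep in reserve is to prove directly that $\overline{Q}_\lambda$ is compact Hausdorff, so that the characteristic map $[0,m]\to\overline{Q}_\lambda$, $t\mapsto[\lambda,t]$, is automatically a quotient map; restricting it to the saturated open set $(0,m)$ then delivers the homeomorphism with no further effort.

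For the closed-cube identity I would prove both inclusions together. Since $[0,m]=\overline{(0,m)}$ and $\Phi_\lambda\colon t\mapsto[\lambda,t]$ is continuous, $\{[\lambda,t]:0\le t\le m\}=\Phi_\lambda([0,m])\subseteq\overline{\Phi_\lambda((0,m))}=\overline{Q}_\lambda$; the reverse inclusion needs only that this image is closed, which follows from compactness of $[0,m]$ together with the Hausdorffness obtained above (or, directly, by checking that the $q$-preimage of $\{[\lambda,t]:0\le t\le m\}$, namely the set of pairs $(\mu,s)$ whose local factor $\mu(\floor[s],\ceil[s])$ is a subpath of $\lambda$, is saturated with open complement).

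Finally, the topology characterization is almost immediate once the covering statement is in hand. Because every $\lambda$ with $d(\lambda)\le\One{k}$ satisfies $|\lambda|\le k$, the first assertion gives $X_\Lambda=X_\Lambda^{k}$. The ``only if'' direction is then the trivial fact that the intersection of an open set with any subspace is relatively open, while the ``if'' direction follows by taking $r=k$, since $U\cap X_\Lambda^{k}=U$. I would close by recording that each $X_\Lambda^{r}$ is closed in $X_\Lambda$ and that the cubes with $|\lambda|=r+1$ are attached to $X_\Lambda^{r}$ along the boundary faces identified in the closed-cube description, so that the filtration $X_\Lambda^0\subseteq\cdots\subseteq X_\Lambda^k=X_\Lambda$ exhibits $X_\Lambda$ as a $k$-dimensional CW-complex with the $Q_\lambda$ as its open cells.
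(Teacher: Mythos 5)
Your proposal is correct and follows essentially the same route as the paper: the covering, disjointness, and skeleton-topology arguments coincide, and your saturated union of boxes around the class $\{(\alpha\lambda\beta,\,t^0+d(\alpha))\}$ is a local variant of the paper's saturated open set $W$, which imposes the interval constraint only in coordinates with $d(\lambda)_i\neq 0$ and leaves the degenerate coordinates unconstrained, making the saturation check you flagged slightly lighter. For the closed-cube identity the paper uses your fallback argument --- computing $q^{-1}(Q_\lambda)$ component-by-component, taking closures there, and pushing forward under $q$ --- so no appeal to compactness or to Hausdorffness of $X_\Lambda$ is needed.
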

\begin{proof}
Write $Y_\Lambda = \bigsqcup_{\lambda \in \Lambda} \{\lambda\} \times [0, d(\lambda)]$, and let $q
: Y_\Lambda \to X_\Lambda$ be the quotient map.

Fix $[\mu,t] \in X_\Lambda$. Then $\ceil[t] - \floor[t] \le \One{k}$. Moreover, $0 \le t -
\floor[t] \le \ceil[t]-\floor[t]$. Let $\lambda = \mu(\floor[t], \ceil[t])$. Whenever
$\floor[t]_i < \ceil[t]_i$ we have $t_i \not\in \ZZ$ and hence $\floor[t]_i < t_i < \ceil[t]_i$.
So
\[
[\mu,t] = \big[\lambda, t - \floor[t]\big] \in Q_\lambda,
\]
whence $X_\Lambda = \bigcup_{m \le \One{k}} \bigcup_{\lambda \in \Lambda^m} Q_\lambda$.

We now show that the $Q_\lambda$ are mutually disjoint. Fix $\lambda,\mu$ with $0 \le d(\lambda),
d(\mu) \le \One{k}$, and suppose that $[\lambda, s] = [\mu, t] \in Q_\lambda \cap Q_\mu$. We must
show that $\lambda = \mu$. By Remark~\ref{rmk:box-top} $\ceil[s] = d(\lambda)$, $\ceil[t] = d(\mu)$
and $\floor[s] = \floor[t] = 0$. So $s - \floor[s] = t - \floor[t]$ forces $s = t$, and thus
\[
d(\lambda) = \ceil[s] = \ceil[t] = d(\mu).
\]
The definition of $\sim$ then forces
\[
\lambda = \lambda(\floor[s], \ceil[s]) = \mu(\floor[t], \ceil[t]) = \mu.
\]

Fix $\lambda \in \Lambda$ with $d(\lambda) \le \One{k}$.  The above argument also shows that
$[\lambda,t] \mapsto t$ is a well-defined bijection from $Q_\lambda$ onto $(0,m) \subset \RR^k$. So
it remains to check that  the map is a homeomorphism. To see this, observe that if $U$ is
relatively open in $Q_\lambda$, then in particular $\{(\lambda,t) : [\lambda,t] \in U\}$ is open in
$\{\lambda\} \times (0, d(\lambda)) \subseteq \{\lambda\} \times [0, d(\lambda)]$, and hence $\{t :
[\lambda,t] \in U\}$ is open in $(0, d(\lambda))$. So $[\lambda,t] \mapsto t$ is an open map. To
see that it is continuous, fix an open subset $V$ of $(0, d(\lambda))$. Define $W \subseteq
Y_\Lambda$ by
\[\begin{split}
    W = \bigcup\{(\alpha\lambda\beta, t) : s(\alpha) &{}= r(\lambda), r(\beta) = s(\lambda) \\
        &\text{ and }t_i - d(\alpha)_i \in V\text{ whenever } d(\lambda)_i \not= 0\}.
\end{split}\]
Then $W$ is open in $Y_\Lambda$, so $q(W)$ is open in $X_\Lambda$. By definition of $\sim$, we have
$q(W) \cap Q_\lambda = \{[\lambda,t] : t \in V\}$. So $[\lambda,t] \mapsto t$ is continuous as
required.

To see that $\overline{Q}_\lambda = \{[\lambda, t] : t \in [0, d(\lambda)]\}$, we observe that
\[\begin{split}
    q^{-1}(Q_\lambda) \cap (\{\mu\} \times [0, d(\mu)])
        = \{(\mu,t) : {}&\mu(\floor[t], \ceil[t]) = \lambda\\ &\text{ and } d(\lambda)_i \not= 0 \implies t_i \not\in \NN\}.
\end{split}\]
So the closure in $\{\mu\} \times [0, d(\mu)]$ of $q^{-1}(Q_\lambda) \cap (\{\mu\} \times [0,
d(\mu)])$ is
\[
    \{(\mu,t) : \mu(\floor[t], \ceil[t]) = \lambda\},
\]
and the image of this closure under $q$ is precisely $\{[\lambda, t] : t \in [0, d(\lambda)]\}$.

It  remains  to check that $U$ is open in $X_\Lambda$ if and only if $U \cap X^r_\Lambda$ is
relatively open for each $r \le k$. Of course if $U$ is open then each $U \cap X^r_\Lambda$ is
relatively open. On the other hand if each $U \cap X^r_\Lambda$ is relatively open, then in
particular $U = U \cap X_\Lambda = U \cap X^k_\Lambda$ is open.
\end{proof}

\begin{cor}
Let $\Lambda$ be a $k$-graph. Then $X_\Lambda$ is a $k$-dimensional CW-complex with $r$-skeleton
$X^r_\Lambda$ as defined in Lemma~\ref{lem:CW structure} for each $r \le k$. In particular, the
open cells in the $CW$-complex are the $Q_\lambda$ where $d(\lambda) \le \One{k}$, and the closed
cells are the $\overline{Q}_\lambda$.
\end{cor}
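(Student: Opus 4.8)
The plan is to verify directly that $X_\Lambda$, filtered by $X^0_\Lambda \subseteq X^1_\Lambda \subseteq \cdots \subseteq X^k_\Lambda = X_\Lambda$, satisfies the axioms of a finite-dimensional CW-complex, using the obvious candidates for characteristic maps and leaning on the three conclusions of \lemref{lem:CW structure}. For each $\lambda$ with $d(\lambda) = m \le \One{k}$, the cube $[0,m]$ is canonically homeomorphic to $[0,1]^{|m|}$ (collapse the $k - |m|$ degenerate coordinates where $m_i = 0$) and hence to the closed ball $D^{|m|}$, carrying $\partial[0,m]$ onto $S^{|m|-1}$. I take as characteristic map $\Phi_\lambda : [0,m] \to X_\Lambda$, $\Phi_\lambda(t) = [\lambda,t]$. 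This is continuous because it factors through the quotient map $q$, its image is $\overline{Q}_\lambda$, and by \lemref{lem:CW structure} it restricts to a homeomorphism of the open cube $(0,m)$ onto the open cell $Q_\lambda$. Thus the open cells are exactly the $Q_\lambda$ with $d(\lambda) \le \One{k}$, they partition $X_\Lambda$, the closed cells are the $\overline{Q}_\lambda$, and $Q_\lambda$ has dimension $|m| \le k$, the $k$-cells being precisely those $Q_\lambda$ with $d(\lambda) = \One{k}$; this yields the dimension count and the identification of cells asserted in the statement.

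Next I would check the skeletal and closure-finiteness conditions for the attaching maps $\Phi_\lambda|_{\partial[0,m]}$. If $t \in \partial[0,m]$ then some coordinate $t_i$ with $m_i = 1$ lies in $\{0,1\} \subseteq \ZZ$, so $\floor[t]_i = \ceil[t]_i$ and hence $|\ceil[t] - \floor[t]| \le |m| - 1 < |m|$. Writing $\lambda' = \lambda(\floor[t], \ceil[t])$, the computation in the proof of \lemref{lem:CW structure} gives $[\lambda, t] = [\lambda', t - \floor[t]] \in Q_{\lambda'} \subseteq X^{|m|-1}_\Lambda$, so the attaching map sends $\partial[0,m]$ into the $(|m|-1)$-skeleton. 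The same computation shows that $\overline{Q}_\lambda$ meets only the cells $Q_{\lambda(p,q)}$ indexed by the finitely many pairs $0 \le p \le q \le m$, giving closure-finiteness.

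The main obstacle is the weak-topology axiom: one must show that $X_\Lambda$ carries the weak topology determined by the closed cells $\overline{Q}_\lambda$, equivalently that each $X^r_\Lambda$ is genuinely the adjunction space obtained from $X^{r-1}_\Lambda$ by attaching the $r$-cells along the maps $\Phi_\lambda|_{\partial[0,m]}$. The final assertion of \lemref{lem:CW structure} reduces openness in $X_\Lambda$ to openness on the skeleta, but the substantive content is that the subspace topology on $X^r_\Lambda$ coincides with the pushout topology: the canonical map from the pushout into $X^r_\Lambda$ is a continuous bijection, and the work is to see it is a homeomorphism. I would establish this by showing that the restriction of $q$ to $\bigsqcup_{d(\lambda) \le \One{k},\, |\lambda| \le r} \{\lambda\} \times [0, d(\lambda)]$ is a quotient map onto $X^r_\Lambda$, which simultaneously supplies the adjunction structure and the weak topology. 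The delicate case is a point $(\mu,t)$ with an integer coordinate $t_i$, where $\floor[t]$ and $\ceil[t]$—and hence the cell containing $[\mu,t]$—jump as $t$ is perturbed across the integer hyperplane, so that boundary points are shared by several cells. Here I would use the explicit descriptions of $q^{-1}(Q_\lambda) \cap (\{\mu\} \times [0, d(\mu)])$ and its closure from the proof of \lemref{lem:CW structure} to check that the $\sim$-saturation of a set meeting every closed cube in a closed set is again closed upstairs, so that the closed sets of the two topologies agree. Granting this, the identification of open cells, closed cells, and the dimension $k$ from the first paragraph completes the proof.
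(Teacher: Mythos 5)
Your proposal is correct, and at bottom it is the same route the paper takes: the paper states the corollary with no proof at all, treating it as immediate from Lemma~\ref{lem:CW structure}, and notes only that the cubical-set realization of \cite{KPS3} gives an ``indirect alternative proof.'' What you add is precisely the content the paper suppresses, and your sharpest observation is right: the weak-topology axiom is the only point of real substance, and the final assertion of Lemma~\ref{lem:CW structure} does not supply it, since as proved there it is vacuous ($U = U \cap X^k_\Lambda$ because $X^k_\Lambda = X_\Lambda$). Your plan for that step succeeds, but the key computation should be made explicit rather than left as a ``saturation'' remark. For arbitrary $\mu \in \Lambda$, the cube $\{\mu\} \times [0,d(\mu)]$ is covered by the finitely many unit subcubes $\{\mu\} \times [a,b]$ with $a,b \in \NN^k$, $a \le b \le d(\mu)$ and $b - a \le \One{k}$, and on each of these Lemma~\ref{lem:sim-plified} gives $q(\mu,t) = [\mu(a,b), t-a]$; that is, the quotient map factors through your characteristic map $\Phi_{\mu(a,b)}$ composed with translation by $a$. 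Hence if $C \subseteq X_\Lambda$ meets every closed cell $\overline{Q}_\lambda$ in a closed set, then $q^{-1}(C) \cap \big(\{\mu\} \times [0,d(\mu)]\big)$ is a finite union of translates of the closed sets $\Phi_{\mu(a,b)}^{-1}\bigl(C \cap \overline{Q}_{\mu(a,b)}\bigr)$, hence closed; as $\mu$ is arbitrary, $C$ is closed in the quotient topology. This simultaneously shows that the restriction of $q$ to the unit cubes is a quotient map (onto $X_\Lambda$ and onto each $X^r_\Lambda$) and verifies the weak topology, which completes your argument; the remaining checks --- the partition into open cells, the attaching maps landing in lower skeleta via $[\lambda,t] = [\lambda(\floor[t],\ceil[t]), t - \floor[t]]$, and closure-finiteness --- are exactly as you wrote them and need no change.
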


In fact, Appendix~A of \cite{KPS3} shows that each $k$-graph $\Lambda$ gives rise to a cubical set
whose $r$-cubes are $\bigcup_{m \le \One{k}, |m| = r} \Lambda^m$, and Theorem~B.2 of \cite{KPS3}
shows that $X_\Lambda$ is homeomorphic to the topological realisation of that cubical set,
providing a somewhat indirect alternative proof of the preceding corollary.

\begin{rmk}
We will mainly be interested in connected $k$-graphs in this paper. However, it is not
difficult to check that $\Lambda$ is connected if and only if $X_\Lambda$ is connected, and
that the connected components of $X_\Lambda$ are precisely the topological realizations of the
connected components of $\Lambda$.
\end{rmk}

\begin{rmk}\label{cwok}
Let $X$ be a connected CW-complex.  Since every CW-complex is locally contractible (see
\cite[Proposition A.4]{Hatcher}), $X$ is path-connected, locally connected and semilocally
simply-connected. Hence, $X$ possesses a universal cover. Let $\Lambda$ be a connected $k$-graph;
then since $X_\Lambda$ is a connected CW-complex it possesses a universal cover as well.
\end{rmk}

\subsection{Examples}

In \cite{KPS3}, a number of examples of $2$-graphs are presented using ``planar diagrams." Here we
will show that the topological realizations of those $2$-graphs are homeomorphic to the spaces
whose homology they were constructed to reflect.

Recall from \cite{KPS3} that the $2$-cubes of a $2$-graph $\Lambda$ are the morphisms of degree
$(1,1)$, and that a commuting diagram (in the category $\Lambda$) that includes all $2$-cubes as
commuting squares is called a \emph{planar diagram} for $\Lambda$. To see how these diagrams relate
to the topological realizations of the corresponding $2$-graphs, we present another description of
$X_\Lambda$ in terms of cubes.

\begin{lem}
Let $\Lambda$ be a $k$-graph. Then $X_\Lambda$ is homeomorphic to the quotient of the topological
disjoint union $\bigsqcup_{d(\lambda) \le \One{k}} \{\lambda\} \times [0, d(\lambda)]$ by the
equivalence relation $R$ generated by
\begin{align*}\textstyle
\bigcup_{m \le \One{k}} \bigcup_{m_i = 1}
    \Big(\big\{\big(&\textstyle(\lambda\alpha,t), (\lambda,t)\big) :
            \lambda \in \Lambda^{m-e_i}, \alpha \in s(\lambda)\Lambda^{e_i}\big\} \\
        &\textstyle{}\cup \big\{\big((\alpha\lambda, t+d(\alpha)), (\lambda,t)\big) :
            \lambda \in \Lambda^{m-e_i}, \alpha \in \Lambda^{e_i} r(\lambda)\}\Big).
\end{align*}
\end{lem}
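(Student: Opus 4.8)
The plan is to build an explicit continuous bijection between the two quotients and then upgrade it to a homeomorphism using the CW-structure from Lemma~\ref{lem:CW structure}. Write $W = \bigsqcup_{d(\lambda)\le\One{k}}\{\lambda\}\times[0,d(\lambda)]$ for the restricted disjoint union, let $\pi\colon W\to W/R$ be the quotient map, let $\iota\colon W\hookrightarrow Y_\Lambda$ be the inclusion into the full disjoint union $Y_\Lambda=\bigsqcup_{\lambda\in\Lambda}\{\lambda\}\times[0,d(\lambda)]$, and let $q\colon Y_\Lambda\to X_\Lambda$ be the defining quotient map. First I would check that $q\circ\iota$ factors through $\pi$, that is, that $R$ is contained in the equivalence relation that $q\circ\iota$ induces on $W$. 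By Lemma~\ref{lem:sim-plified} it suffices to see that each generator of $R$ is an instance of the generating relation of $\sim$: the first family arises by taking the prefix in $\alpha\lambda\beta$ to be a vertex and the suffix to be the edge in question, and the second family by taking the prefix to be the edge and the suffix a vertex. This produces a continuous map $\overline q\colon W/R\to X_\Lambda$ with $\overline q\circ\pi = q\circ\iota$.

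Next I would show $\overline q$ is a bijection. Surjectivity is immediate from Lemma~\ref{lem:CW structure}, since the cells $Q_\lambda$ with $d(\lambda)\le\One{k}$ cover $X_\Lambda$ and each lies in the image of $q\circ\iota$. Injectivity is the crux. Given $(\lambda,s)\in W$, set $\nu=\lambda(\floor[s],\ceil[s])$, $\alpha=\lambda(0,\floor[s])$ and $\beta=\lambda(\ceil[s],d(\lambda))$, so that $\lambda=\alpha\nu\beta$ and $d(\nu)\le\One{k}$. Because $d(\lambda)\le\One{k}$, the degrees $\floor[s]$, $d(\nu)$ and $d(\lambda)-\ceil[s]$ have pairwise disjoint supports, with $s_i=1$ on the support of $\alpha$, $\floor[s]_i<s_i<\ceil[s]_i$ on the support of $\nu$, and $s_i=0$ on the support of $\beta$. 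I would then show that $(\lambda,s)$ is $R$-equivalent to its interior representative $(\nu,s-\floor[s])$ by peeling off one edge at a time: factoring $\beta$ into edges and repeatedly applying the first family of generators, with the parameter held fixed (legitimate since $s_i=0$ in each direction removed), reduces $(\alpha\nu\beta,s)$ to $(\alpha\nu,s)$; then factoring $\alpha$ into edges and repeatedly applying the second family, each step lowering the relevant parameter coordinate from $1$ to $0$, reduces $(\alpha\nu,s)$ to $(\nu,s-\floor[s])$. Granting this, if $(\lambda,s)$ and $(\mu,t)$ lie in $W$ and are identified by $q$, then by definition of $\sim$ they share the interior cube $\nu$ and satisfy $s-\floor[s]=t-\floor[t]$, so each is $R$-equivalent to the common representative $(\nu,s-\floor[s])$ and hence to the other. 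I expect this injectivity step to be the main obstacle: the work is entirely in the bookkeeping that verifies the peeling stays inside the admissible parameter ranges $[0,d(\cdot)]$ at every stage, so that the generators of $R$ actually apply, and that the edge-by-edge factorizations assemble correctly via the factorization property.

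Finally I would promote the continuous bijection $\overline q$ to a homeomorphism by showing it is a closed map, using that $X_\Lambda$ is a CW-complex: it is Hausdorff and carries the weak topology determined by its closed cells $\overline{Q}_\lambda$. For each $\lambda$ with $d(\lambda)\le\One{k}$ the set $E_\lambda=\pi(\{\lambda\}\times[0,d(\lambda)])$ is compact, $\overline q(E_\lambda)=\overline{Q}_\lambda$ by Lemma~\ref{lem:CW structure}, and bijectivity of $\overline q$ gives $\overline q^{-1}(\overline{Q}_\lambda)=E_\lambda$. Hence for any closed $C\subseteq W/R$ the intersection $C\cap E_\lambda$ is compact, so $\overline q(C)\cap\overline{Q}_\lambda=\overline q(C\cap E_\lambda)$ is compact and therefore closed in $\overline{Q}_\lambda$; by the weak topology $\overline q(C)$ is then closed in $X_\Lambda$. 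Thus $\overline q$ is a continuous closed bijection, hence a homeomorphism, which is the required identification. Everything here beyond the injectivity step is a direct unwinding of definitions together with a standard compactness-plus-weak-topology argument.
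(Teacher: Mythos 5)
Your proof is correct, but it follows a genuinely different route from the paper's. The paper's entire proof is a citation: by \cite[Theorem~B.2]{KPS3}, $X_\Lambda$ is homeomorphic to the topological realization of the cubical set associated to $\Lambda$, and since every point in the realization of a cubical set has a representative in a nondegenerate cube, that realization is precisely the quotient described in the statement. You instead give a self-contained argument inside the paper's own framework: writing $W=\bigsqcup_{d(\lambda)\le\One{k}}\{\lambda\}\times[0,d(\lambda)]$, you factor the restriction of the quotient map $q$ through $W/R$ by checking via Lemma~\ref{lem:sim-plified} that each generator of $R$ is an instance of~\eqref{eq:sim generator} (take the prefix to be a vertex and the suffix to be the edge, or vice versa); you prove injectivity by the normal-form observation that every $(\lambda,s)\in W$ is $R$-equivalent to its interior representative $\big(\lambda(\floor[s],\ceil[s]),\,s-\floor[s]\big)$, peeling suffix edges (in directions where $s_j=0$) using the first family of generators and prefix edges (in directions where $s_j=1$) using the second; and you upgrade the resulting continuous bijection to a homeomorphism by the compactness-plus-weak-topology argument, which requires only that $X_\Lambda$ be Hausdorff and carry the weak topology determined by the closed cells $\overline{Q}_\lambda$, both supplied by the Corollary asserting the CW structure. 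I verified your peeling bookkeeping: at each suffix step the removed direction $j$ satisfies $s_j=0$, so the parameter stays in $[0,d(\cdot)]$ for the shorter path, and at each prefix step $s_j=1$, so subtracting $e_j$ keeps it admissible; hence each step is literally a generator of $R$, and since $\sim$-equivalent points of $W$ have the same interior representative by the definition of $\sim$, transitivity of $R$ finishes injectivity. The trade-off: the paper's proof is two sentences but imports the cubical-set machinery of \cite{KPS3}; yours is longer but elementary, relying only on Lemmas \ref{lem:sim-plified} and~\ref{lem:CW structure} and the CW Corollary, and it makes explicit the canonical-representative structure that the cubical-set formalism encodes abstractly.
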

\begin{proof}
By \cite[Theorem~B.2]{KPS3}, $X_\Lambda$ is homeomorphic to the topological realization of the
associated cubical set. Since, in the topological realization of a cubical set, every point has a
representative in a nondegenerate cube, the topological realization of the cubical set of $\Lambda$
is precisely the quotient described above.
\end{proof}

The preceding lemma implies that if $E$ is a planar diagram for a $2$-graph $\Lambda$, then the
topological realization of $\Lambda$ is homeomorphic to the space obtained by pasting a unit square
into each commuting square in $E$, and then identifying all instances of any given edge or vertex
in an orientation-preserving way.

To describe the examples in this section, recall that the $1$-skeleton, or just skeleton, of a
$k$-graph is the directed graph $E$ with vertices $\Lambda^0$ and edges $\bigsqcup^k_{i=1}
\Lambda^{e_i}$ drawn using $k$-different colours to distinguish the different degrees. There is a
complete characterisation of which $k$-colored graphs give rise to $k$-graphs
\cite{FowlerSims:TAMS02, HazelwoodRaeburnEtAl:xx11}, but for our purposes it suffices to recall the
following special case of the construction of \cite[Section~6]{KumjianPask:NYJM00}: if $E$ is a
$2$-colored graph (with edges colored blue and red, say) and if, for every bi-colored path $ef$ in
$E^2$ there is a unique bi-colored $f'e'$ with the same range and source but with the colors
occurring in the reverse order, then there is a unique $2$-graph $\Lambda$ whose $1$-skeleton is
$E$.

The diagrams in the following examples are reproduced from \cite{KPS3}; edges of degree $(1,0)$ are
blue and solid, and edges of degree $(0,1)$ are red and dashed.

\begin{example}[{\cite[Example~5.4]{KPS3}}]\label{ex:sphere}
Let $\Lambda$ be the $2$-graph with planar diagram and skeleton below.
\[
\begin{tikzpicture}[scale=1.5]
    \node[inner sep=1pt, circle] (sw) at (-1,-1) {$w$};
    \node[inner sep=1pt, circle] (w) at (-1,0) {$z$};
    \node[inner sep=1pt, circle] (nw) at (-1,1) {$w$};
    \node[inner sep=1pt, circle] (s) at (0,-1) {$u$};
    \node[inner sep=1pt, circle] (m) at (0,0) {$x$};
    \node[inner sep=1pt, circle] (n) at (0,1) {$v$};
    \node[inner sep=1pt, circle] (se) at (1,-1) {$w$};
    \node[inner sep=1pt, circle] (e) at (1,0) {$y$};
    \node[inner sep=1pt, circle] (ne) at (1,1) {$w$};
    \draw[-latex, blue] (s)--(sw) node[pos=0.5, above] {\color{black}$a$};
    \draw[-latex, blue] (s)--(se) node[pos=0.5, above] {\color{black}$a$};
    \draw[-latex, blue] (m)--(w) node[pos=0.5, above] {\color{black}$d$};
    \draw[-latex, blue] (m)--(e) node[pos=0.5, above] {\color{black}$c$};
    \draw[-latex, blue] (n)--(nw) node[pos=0.5, above] {\color{black}$b$};
    \draw[-latex, blue] (n)--(ne) node[pos=0.5, above] {\color{black}$b$};
    \draw[-latex, red, dashed] (sw)--(w) node[pos=0.5, right] {\color{black}$h$};
    \draw[-latex, red, dashed] (nw)--(w) node[pos=0.5, right] {\color{black}$h$};
    \draw[-latex, red, dashed] (s)--(m) node[pos=0.5, right] {\color{black}$e$};
    \draw[-latex, red, dashed] (n)--(m) node[pos=0.5, right] {\color{black}$f$};
    \draw[-latex, red, dashed] (se)--(e) node[pos=0.5, right] {\color{black}$g$};
    \draw[-latex, red, dashed] (ne)--(e) node[pos=0.5, right] {\color{black}$g$};
%
%
\begin{scope}[xshift=5cm]
    \node[inner sep= 1pt] (100) at (1,0,0) {$u$};
    \node[inner sep= 1pt] (-100) at (-1,0,0) {$v$};
    \node[inner sep= 1pt] (010) at (0,1,0) {$w$};
    \node[inner sep= 1pt] (0-10) at (0,-1,0) {$x$};
    \node[inner sep= 1pt] (001) at (0,0,1) {$y$};
    \node[inner sep= 1pt] (00-1) at (0,0,-1) {$z$};
    \draw[-latex, red, dashed] (010) .. controls +(0,0,0.6) and +(0,0.6,0) .. (001) node[pos=0.5, anchor=east] {\color{black}$g$};
    \draw[-latex, red, dashed] (010) .. controls +(0,0,-0.6) and +(0,0.6,0) .. (00-1) node[pos=0.8, anchor=east] {\color{black}$h$};
    \draw[-latex, blue] (0-10) .. controls +(0,0,0.6) and +(0,-0.6,0) .. (001) node[pos=0.85, anchor=west] {\color{black}$c$};
    \draw[-latex, blue] (0-10) .. controls +(0,0,-0.6) and +(0,-0.6,0) .. (00-1) node[pos=0.5, anchor=south east] {\color{black}$d$};
    \draw[-stealth, white, very thick] (100) .. controls +(0,0.6,0) and +(0.6,0,0) .. (010) node[pos=0.5, anchor=south west] {\color{black}$a$};
    \draw[-latex, blue] (100) .. controls +(0,0.6,0) and +(0.6,0,0) .. (010) node[pos=0.5, anchor=south west] {\color{black}$a$};
    \draw[-latex, red, dashed] (100) .. controls +(0,-0.6,0) and +(0.6,0,0) .. (0-10) node[pos=0.5, anchor=north west] {\color{black}$e$};
    \draw[-latex, blue] (-100) .. controls +(0,0.6,0) and +(-0.6,0,0) .. (010) node[pos=0.5, anchor=south east] {\color{black}$b$};
    \draw[-latex, red, dashed] (-100) .. controls +(0,-0.6,0) and +(-0.6,0,0) .. (0-10) node[pos=0.5, anchor=north east] {\color{black}$f$};
\end{scope}
\end{tikzpicture}
\]
If we paste a square into each commuting square in the planar diagram on the left, and then
identify all instances of any given edge or vertex, the resulting space is that obtained by pasting
a square onto each commuting square in the skeleton on the right, so is homeomorphic to a sphere.
In particular, the fundamental group of this $2$-graph is trivial.
\end{example}

\begin{example}[{\cite[Example~5.5]{KPS3}}]
Consider the $2$-graph $\Sigma$ with planar diagram on the left and skeleton on the right in the
following diagram.
\[
\begin{tikzpicture}[scale=1.5]
    \node at (-2,0) {};
    \node[inner sep=0.5pt, circle] (sw) at (-1,-1) {$x$};
    \node[inner sep=0.5pt, circle] (w) at (-1,0) {$v$};
    \node[inner sep=0.5pt, circle] (nw) at (-1,1) {$x$};
    \node[inner sep=0.5pt, circle] (s) at (0,-1) {$w$};
    \node[inner sep=0.5pt, circle] (m) at (0,0) {$u$};
    \node[inner sep=0.5pt, circle] (n) at (0,1) {$w$};
    \node[inner sep=0.5pt, circle] (se) at (1,-1) {$x$};
    \node[inner sep=0.5pt, circle] (e) at (1,0) {$v$};
    \node[inner sep=0.5pt, circle] (ne) at (1,1) {$x$};
    \draw[-latex, blue] (sw)--(s) node[pos=0.5, above, black] {$b$};
    \draw[-latex, blue] (se)--(s) node[pos=0.5, above, black] {$a$};
    \draw[-latex, blue] (w)--(m) node[pos=0.5, above, black] {$c$};
    \draw[-latex, blue] (e)--(m) node[pos=0.5, above, black] {$d$};
    \draw[-latex, blue] (nw)--(n) node[pos=0.5, above, black] {$b$};
    \draw[-latex, blue] (ne)--(n) node[pos=0.5, above, black] {$a$};
    \draw[-latex, red, dashed] (sw)--(w) node[pos=0.5, right, black] {$f$};
    \draw[-latex, red, dashed] (nw)--(w) node[pos=0.5, right, black] {$e$};
    \draw[-latex, red, dashed] (s)--(m) node[pos=0.5, right, black] {$h$};
    \draw[-latex, red, dashed] (n)--(m) node[pos=0.5, right, black] {$g$};
    \draw[-latex, red, dashed] (se)--(e) node[pos=0.5, right, black] {$f$};
    \draw[-latex, red, dashed] (ne)--(e) node[pos=0.5, right, black] {$e$};
\begin{scope}[xshift=4.5cm]
    \node[inner sep= 1pt] (002) at (0,0,2) {$u$};
    \node[inner sep= 1pt] (001) at (0,0,1.2) {$w$};
    \node[inner sep= 1pt] (00-1) at (0,0,-1.2) {$x$};
    \node[inner sep= 1pt] (00-2) at (0,0,-2) {$v$};
    \draw[-latex, red, dashed] (001) .. controls +(0,0.8,0) and +(0,0.8,0) .. (002) node[pos=0.25, anchor=west] {\color{black}$g$};
    \draw[-latex, red, dashed] (001) .. controls +(0,-0.8,0) and +(0,-0.8,0) .. (002) node[pos=0.5, anchor=north] {\color{black}$h$};
    \draw[-latex, red, dashed] (00-1) .. controls +(0,0.8,0) and +(0,0.8,0) .. (00-2) node[pos=0.5, anchor=south] {\color{black}$e$};
    \draw[-latex, red, dashed] (00-1) .. controls +(0,-0.8,0) and +(0,-0.8,0) .. (00-2) node[pos=0.25, anchor=east] {\color{black}$f$};
    \draw[-latex, blue] (00-1) .. controls +(1,0,0) and +(1,0,0) .. (001) node[pos=0.6, anchor=west] {\color{black}$a$};
    \draw[-latex, blue] (00-1) .. controls +(-1,0,0) and +(-1,0,0) .. (001) node[pos=0.4, inner sep=1pt, anchor=south east] {\color{black}$b$};
    \draw[-latex, blue] (00-2) .. controls +(2,0,0) and +(2,0,0) .. (002) node[pos=0.6, anchor=west] {\color{black}$c$};
    \draw[-latex, blue] (00-2) .. controls +(-2,0,0) and +(-2,0,0) .. (002) node[pos=0.4, inner sep=1pt, anchor=south east] {\color{black}$d$};
\end{scope}
\end{tikzpicture}
\]
The argument of the preceding example shows that the topological realization of this $2$-graph is a
$2$-torus. In particular, its fundamental group is $\ZZ^2$.
\end{example}

\begin{example}[{\cite[Example~5.6]{KPS3}}]\label{eg:projective}
Let $\Lambda$ be the $2$-graph with planar diagram on the left and skeleton on the right in the
following diagram.
\[
\begin{tikzpicture}
\begin{scope}[scale=1.5]
    \node[inner sep=0.5pt, circle] (sw) at (-1,-1) {$x$};
    \node[inner sep=0.5pt, circle] (w) at (-1,0) {$v$};
    \node[inner sep=0.5pt, circle] (nw) at (-1,1) {$y$};
    \node[inner sep=0.5pt, circle] (s) at (0,-1) {$w$};
    \node[inner sep=0.5pt, circle] (m) at (0,0) {$u$};
    \node[inner sep=0.5pt, circle] (n) at (0,1) {$w$};
    \node[inner sep=0.5pt, circle] (se) at (1,-1) {$y$};
    \node[inner sep=0.5pt, circle] (e) at (1,0) {$v$};
    \node[inner sep=0.5pt, circle] (ne) at (1,1) {$x$};
    \draw[-latex, blue] (sw)--(s) node[pos=0.5, above, black] {$b$};
    \draw[-latex, blue] (se)--(s) node[pos=0.5, above, black] {$a$};
    \draw[-latex, blue] (w)--(m) node[pos=0.5, above, black] {$c$};
    \draw[-latex, blue] (e)--(m) node[pos=0.5, above, black] {$d$};
    \draw[-latex, blue] (nw)--(n) node[pos=0.5, above, black] {$a$};
    \draw[-latex, blue] (ne)--(n) node[pos=0.5, above, black] {$b$};
    \draw[-latex, red, dashed] (sw)--(w) node[pos=0.5, right, black] {$f$};
    \draw[-latex, red, dashed] (nw)--(w) node[pos=0.5, right, black] {$e$};
    \draw[-latex, red, dashed] (s)--(m) node[pos=0.5, right, black] {$h$};
    \draw[-latex, red, dashed] (n)--(m) node[pos=0.5, right, black] {$g$};
    \draw[-latex, red, dashed] (se)--(e) node[pos=0.5, right, black] {$e$};
    \draw[-latex, red, dashed] (ne)--(e) node[pos=0.5, right, black] {$f$};
%
\end{scope}
\begin{scope}[scale=2, xshift=3cm]
    \node[inner sep=.5pt, circle] (u) at (0,0) {$u$};
    \node[inner sep=.5pt, circle] (v) at (0,1) {$v$};
    \node[inner sep=.5pt, circle] (w) at (0,-1) {$w$};
    \node[inner sep=.5pt, circle] (x) at (1,0) {$x$};
    \node[inner sep=.5pt, circle] (y) at (-1,0) {$y$};
    \draw[-latex, blue] (v) .. controls +(-0.2,-0.5) .. (u) node[pos=0.5, left, black] {$c$};
    \draw[-latex, blue] (v) .. controls +(0.2,-0.5) .. (u) node[pos=0.5, right, black] {$d$};
    \draw[-latex, red, dashed] (w) .. controls +(-0.2,0.5) .. (u) node[pos=0.5, left, black] {$g$};
    \draw[-latex, red, dashed] (w) .. controls +(0.2,0.5) .. (u) node[pos=0.5, right, black] {$h$};
    \draw[-latex, blue] (x)--(w) node[pos=0.5, anchor=north west, black] {$b$};
    \draw[-latex, red, dashed] (x)--(v) node[pos=0.5, anchor=south west, black] {$f$};
    \draw[-latex, blue] (y)--(w) node[pos=0.5, anchor=north east, black] {$a$};
    \draw[-latex, red, dashed] (y)--(v) node[pos=0.5, anchor=south east, black] {$e$};
\end{scope}
\end{tikzpicture}
\]
Arguing as in the preceding two examples, we see that the topological realization of this $2$-graph
is homeomorphic to the projective plane.
\end{example}

\begin{example}\label{ex:Klein}
Consider the $2$-graph $\Lambda$ with planar diagram on the left and skeleton on the right in the
following diagram.
\[
\begin{tikzpicture}[scale=1.5]
    \node[inner sep=0.5pt, circle] (sw) at (-1,-1) {$x$};
    \node[inner sep=0.5pt, circle] (w) at (-1,0) {$v$};
    \node[inner sep=0.5pt, circle] (nw) at (-1,1) {$x$};
    \node[inner sep=0.5pt, circle] (s) at (0,-1) {$w$};
    \node[inner sep=0.5pt, circle] (m) at (0,0) {$u$};
    \node[inner sep=0.5pt, circle] (n) at (0,1) {$w$};
    \node[inner sep=0.5pt, circle] (se) at (1,-1) {$x$};
    \node[inner sep=0.5pt, circle] (e) at (1,0) {$v$};
    \node[inner sep=0.5pt, circle] (ne) at (1,1) {$x$};
    \draw[-latex, blue] (sw)--(s) node[pos=0.5, above, black] {$b$};
    \draw[-latex, blue] (se)--(s) node[pos=0.5, above, black] {$a$};
    \draw[-latex, blue] (w)--(m) node[pos=0.5, above, black] {$c$};
    \draw[-latex, blue] (e)--(m) node[pos=0.5, above, black] {$d$};
    \draw[-latex, blue] (nw)--(n) node[pos=0.5, above, black] {$b$};
    \draw[-latex, blue] (ne)--(n) node[pos=0.5, above, black] {$a$};
    \draw[-latex, red, dashed] (sw)--(w) node[pos=0.5, right, black] {$f$};
    \draw[-latex, red, dashed] (nw)--(w) node[pos=0.5, right, black] {$e$};
    \draw[-latex, red, dashed] (s)--(m) node[pos=0.5, right, black] {$h$};
    \draw[-latex, red, dashed] (n)--(m) node[pos=0.5, right, black] {$g$};
    \draw[-latex, red, dashed] (se)--(e) node[pos=0.5, right, black] {$e$};
    \draw[-latex, red, dashed] (ne)--(e) node[pos=0.5, right, black] {$f$};
%
%
\begin{scope}[xshift=3cm, yshift=-0.78cm, scale=1.5]
    \node[inner sep=0.5pt, circle] (u) at (0,0) {$u$};
    \node[inner sep=0.5pt, circle] (v) at (1,0) {$v$};
    \node[inner sep=0.5pt, circle] (w) at (0,1) {$w$};
    \node[inner sep=0.5pt, circle] (x) at (1,1) {$x$};
    \draw[blue,-latex,out=160, in=20] (v) to node[pos=0.5,above, black] {$c$} (u);
    \draw[blue,-latex,out=200, in=340] (v) to node[pos=0.5,below, black] {$d$} (u);
    \draw[blue,-latex,out=160, in=20] (x) to node[pos=0.5,above, black] {$a$} (w);
    \draw[blue,-latex,out=200, in=340] (x) to node[pos=0.5,below, black] {$b$} (w);
    \draw[red, dashed, -latex, out=290, in=70] (w) to node[pos=0.5,right, black] {$h$} (u);
    \draw[red, dashed, -latex, out=250, in=110] (w) to node[pos=0.5,left, black] {$g$} (u);
    \draw[red, dashed, -latex, out=290, in=70] (x) to node[pos=0.5,right, black] {$f$} (v);
    \draw[red, dashed, -latex, out=250, in=110] (x) to node[pos=0.5,left, black] {$e$} (v);
\end{scope}
\end{tikzpicture}
\]
Arguing as above, we see that the topological realization of this $2$-graph is a Klein bottle, and
in particular that its fundamental group is $\F_2/\langle abab^{-1}\rangle$.
\end{example}

\section{The fundamental group of a higher-rank graph}\label{sec:fundamental group}

In proving that the algebraic and topological fundamental groups of a $k$-graph are isomorphic, on
the algebraic side we need to pass from the fundamental groupoid to the fundamental group. Here we
show how to do this.

First of all, we quote the following theorems from topology (see, e.g., \cite{massey}). Let $X$ be
a connected $k$-dimensional CW-complex.

\begin{enumerate}
\item \cite[Theorem~VII.4.1]{massey} The inclusion $X^2\hookrightarrow X$ induces an
    isomorphism $\pi_1(X^2)\cong \pi_1(X)$.

\item \cite[Theorem~VII.2.1]{massey} Let $\iota:X^1\hookrightarrow X^2$ denote the inclusion
    map. Denote by $Q = ((0,0),(1,1))$ the open unit square in $\R^2$, let $\bar Q$ be the
    closed unit square, and let $\partial Q=\bar Q\minus Q$. Each 2-cell attached to form $X^2$
    is determined by a ``characteristic map'' $f_i:\bar Q\to X^2$, namely a continuous map
    taking $Q$ homeomorphically onto an open set in $X^2\minus X^1$ such that $f_i(\partial
    Q)\subset X^1\minus X^2$. Let $\varphi$ denote a generator of $\pi_1(\partial Q)$. Then
    $\iota_*:\pi_1(X^1)\to \pi_1(X^2)$ is a surjective homomorphism whose kernel is the normal
    subgroup generated by the images $f_i{}_*(\varphi)$ under the characteristic maps.

\item \cite[Theorem~VI.5.2]{massey}
$\pi_1(X^1)\cong \F_n$, where $n$ is the cardinality of the set of edges in $E^1$ remaining after a maximal tree has been removed.
\end{enumerate}

We next give some background from \cite{schubert}.
Let $\CCC$ be a small category.

A \emph{congruence relation} on $\CCC$ is an equivalence relation $R$ on $\CCC$ such that
\begin{align}
\label{fibre}
&\text{if $(\alpha,\beta)\in R$ then $s(\alpha)=s(\beta)$ and $r(\alpha)=r(\beta)$, and}
\\
\label{compose}
&\text{if $(\alpha,\beta),(\lambda,\mu)\in R$ and $s(\alpha)=r(\lambda)$, then
$(\alpha\lambda,\beta\mu)\in R$.}
\end{align}
In this case the quotient $\CCC/R$ is a category, and the quotient map $Q:\CCC\to \CCC/R$ is a functor.

If $S\subset \CCC\times\CCC$ satisfies \eqref{fibre}, then there is a smallest congruence relation
on $\CCC$ containing $S$, which we say is \emph{generated} by $S$.

We are primarily interested in the case where $\CCC$ is a groupoid, which we will typically denote
by $\GG$. Also, we will make the standing assumption that $\GG$ is connected in the sense
that $v \GG u \not= \emptyset$ for all units $v,u$ of $\GG$.

A subgroupoid $\NNN$ of $\GG$ is \emph{normal} if
\begin{align}
&\label{wide}\text{$\NNN^0=\GG^0$, and}
\\
&\label{conjugate}\text{$\beta\alpha\beta\inv\in \NNN$ for all
$\alpha\in\NNN(u)$,
$\beta\in \GG u$,
and
$u\in\GG^0$.}
\end{align}

The following is our main technical tool allowing us to pass from the fundamental groupoid to the
fundamental group. Recall that for a unit $u$ of a groupoid $\GG$, we write $\GG(u)$ for
the isotropy group $u\GG u$ at $u$.

\begin{prop}\label{technical}
Let $S\subset\GG\times\GG$ satisfy \eqref{fibre}, let $R$ be the congruence relation on $\GG$
generated by $S$, let $\HH=\GG/R$ be the quotient groupoid, and let $Q:\GG\to\HH$ be the quotient
map. Fix $u\in\GG^0$, and for each $v\in\GG^0$ choose $\kappa_v\in v\GG u$, with $\kappa_u=u$. Let
$K$ be the normal subgroup of $\GG(u)$ generated by
\begin{equation}\label{connect squares}
\{\kappa_{r(\alpha)}\inv\alpha\beta\inv\kappa_{r(\alpha)}:(\alpha,\beta)\in S\}.
\end{equation}
Then $\HH(u)=\GG(u)/K$.
\end{prop}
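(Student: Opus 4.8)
The plan is to show that the quotient functor $Q$ restricts to a surjective group homomorphism $\phi:\GG(u)\to\HH(u)$ with kernel exactly $K$; the first isomorphism theorem then yields $\HH(u)=\GG(u)/K$. First I would record that, since $R$ satisfies \eqref{fibre}, it relates only morphisms with equal source and range, so $Q$ is the identity on objects and in particular $\HH^0=\GG^0$. Consequently $Q$ carries $\GG(u)=u\GG u$ into $\HH(u)=u\HH u$, giving a homomorphism $\phi$. For surjectivity, every element of $\HH(u)$ is $Q(\gamma)$ for some $\gamma\in\GG$ (as $Q$ is onto on morphisms), and because $Q$ preserves source and range and fixes objects, $Q(\gamma)\in\HH(u)$ forces $r(\gamma)=s(\gamma)=u$, i.e. $\gamma\in\GG(u)$. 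The inclusion $K\subseteq\ker\phi$ is immediate: for $(\alpha,\beta)\in S$ we have $Q(\alpha)=Q(\beta)$, so $Q$ kills $\alpha\beta\inv$ and hence each generator \eqref{connect squares}, and $\ker\phi$ is a subgroup.

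The crux is the reverse inclusion $\ker\phi\subseteq K$. Rather than unwind $R$ as a chain of elementary substitutions, I would exhibit directly a congruence relation that already ``sees'' $K$. To do this I first spread $K$ around $\GG$: for each unit $v$ set $K_v=\kappa_v K\kappa_v\inv$, a normal subgroup of $\GG(v)$ (conjugation by $\kappa_v$ is an isomorphism $\GG(u)\to\GG(v)$), with $K_u=K$ since $\kappa_u=u$. Then define a relation $R'$ on $\GG$ by declaring $(\gamma,\delta)\in R'$ precisely when $s(\gamma)=s(\delta)$, $r(\gamma)=r(\delta)$, and $\gamma\delta\inv\in K_{r(\gamma)}$; this makes sense because $\gamma\delta\inv\in\GG(r(\gamma))$ under these source/range conditions.

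Next I would verify that $R'$ is a congruence relation containing $S$. That $R'$ is an equivalence relation satisfying \eqref{fibre} follows routinely from the subgroup axioms. For $S\subseteq R'$, note that for $(\alpha,\beta)\in S$ the element $\kappa_{r(\alpha)}\inv\alpha\beta\inv\kappa_{r(\alpha)}$ is one of the generators \eqref{connect squares} of $K$, which says exactly $\alpha\beta\inv\in K_{r(\alpha)}$. The real work is \eqref{compose}: given $(\alpha,\beta),(\lambda,\mu)\in R'$ with $s(\alpha)=r(\lambda)$, I would compute $(\alpha\lambda)(\beta\mu)\inv=\alpha\lambda\mu\inv\beta\inv=(\alpha\beta\inv)(\beta\lambda\mu\inv\beta\inv)$ and show each factor lies in $K_{r(\alpha)}$. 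The first does by hypothesis. For the second, write $\lambda\mu\inv=\kappa_{s(\beta)}k\kappa_{s(\beta)}\inv$ with $k\in K$ (using $s(\beta)=s(\alpha)=r(\lambda)$), so that
\[
\beta\lambda\mu\inv\beta\inv=\kappa_{r(\beta)}\,g\,k\,g\inv\,\kappa_{r(\beta)}\inv,\qquad g=\kappa_{r(\beta)}\inv\beta\kappa_{s(\beta)}\in\GG(u).
\]
Since $K$ is normal in $\GG(u)$ by \eqref{conjugate}, $gkg\inv\in K$, whence $\beta\lambda\mu\inv\beta\inv\in K_{r(\beta)}=K_{r(\alpha)}$ and the product lies in $K_{r(\alpha)}$ as required. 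This verification of \eqref{compose}, with its source/range bookkeeping and the transport of $K$ between isotropy groups via the chosen paths $\kappa_v$, is the step I expect to be the main obstacle; normality of $K$ in $\GG(u)$ is exactly what makes it go through.

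Finally, minimality of $R$ gives $R\subseteq R'$. Hence if $\gamma\in\GG(u)$ lies in $\ker\phi$, so that $(\gamma,u)\in R\subseteq R'$, then $\gamma=\gamma u\inv\in K_{r(\gamma)}=K_u=K$. Thus $\ker\phi=K$, and $\phi$ induces the asserted isomorphism $\GG(u)/K\cong\HH(u)$.
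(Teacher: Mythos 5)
Your proof is correct, but it takes a genuinely different route from the paper's. The paper works directly with the kernel $\NNN=\ker Q$, observes that $\NNN$ is the normal subgroupoid of $\GG$ generated by $T=\{\alpha\beta\inv:(\alpha,\beta)\in S\}$, asserts that the isotropy group $\NNN(u)$ is the subgroup of $\GG(u)$ generated by all conjugates $\beta\inv(T\cap\GG(v))\beta$ with $\beta\in v\GG u$, and then uses normality of $K$ to collapse these conjugates into $K$; the key structural fact about how $\NNN(u)$ is generated is quoted rather than proved. You instead never touch the generated normal subgroupoid: you build an explicit congruence $R'$ (same source and range, and $\gamma\delta\inv\in K_{r(\gamma)}$ where $K_v=\kappa_v K\kappa_v\inv$), verify the congruence axiom \eqref{compose} by hand --- which is exactly where normality of $K$ in $\GG(u)$ and the transport along the $\kappa_v$ enter --- and then invoke minimality of $R$ to get $R\subseteq R'$, hence $\ker\phi\subseteq K$. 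What each approach buys: the paper's argument is shorter modulo the folklore description of the isotropy of a generated normal subgroupoid, while yours is self-contained, replacing that unproved structural claim with the direct verification that $R'$ is a congruence; the computation in your step \eqref{compose} is essentially the same algebra that would be needed to justify the paper's generation claim, so the two proofs rest on the same core identity, packaged differently. One small correction: $K$ is normal in $\GG(u)$ by its definition as the normal subgroup generated by \eqref{connect squares}; condition \eqref{conjugate} is part of the definition of a normal \emph{subgroupoid} and is not what is being used at that point.
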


\begin{proof}
Let $\NNN=\ker Q$, so that $\HH=\GG/\NNN$.
Put
\[
T=\{\alpha\beta\inv:(\alpha,\beta)\in S\}.
\]
Then $\NNN$ is the normal subgroupoid of $\GG$ generated by $T$, and $K$ is the normal subgroup of
$\GG(u)$ generated by
\begin{equation}\label{sufficient set}
\bigcup_{v\in\GG^0}\kappa_v\inv(T\cap\GG(v))\kappa_v.
\end{equation}
It suffices to show that $\NNN(u)=K$.

Since $T\subset\bigcup_{v\in\GG^0}\GG(v)$, the group $\NNN(u)$ coincides with the subgroup of
$\GG(u)$ generated by
\[
\bigcup_{\substack{v\in\GG^0\\\beta\in v\GG u}}\beta\inv(T\cap\GG(v))\beta.
\]
We need to know that $\NNN(u)$ is generated as a normal subgroup by the smaller set
\eqref{sufficient set}. Since $K$ is normal, it is easy to see that for each $v\in\GG^0$ and
$\beta\in v\GG u$ we have
\[
\beta\inv(T\cap\GG(v))\beta\subset\kappa_v\inv(T\cap\GG(v))\kappa_v,
\]
and the result follows.
\end{proof}

Our next goal is to show how to pass from the fundamental groupoid to the fundamental
group, which we do in Corollary~\ref{isomorphism}. Let $\Lambda$ be a connected $k$-graph, and let
$E$ be its 1-skeleton. Let $\GG(\Lambda)$ and $\GG(E)$ denote the fundamental groupoids of
$\Lambda$ and $E$, respectively. We will find it convenient to package the commuting squares of
$\Lambda$ as ``commutativity conditions''\footnote{in the terminology of \cite{schubert}} in $E$:
each commuting square is of the form $\lambda=ef=gh$ with $d(e)=d(h)=e_i$, $e(f)=d(g)=e_j$, and
$i\ne j$. We regard the edge-paths $ef$ and $gh$ as elements of $\GG(E)$, we associate to $\lambda$
the pair $(ef,gh)\in \GG(E)\times\GG(E)$, and we let $S$ denote the set of all such pairs (and we
abuse terminology by referring to these pairs as commuting squares also).

As discussed in the paragraph following \cite[Definition~5.6]{pqr:groupoid}, it follows from \cite[Theorem~5.5]{pqr:groupoid} that
\[
\GG(\Lambda)\cong \GG(E)/R,
\]
where $R$ is the congruence relation on $\GG(E)$ generated by $S$.
In the following corollary, we identify the fundamental group;
this corollary can be regarded as making precise the discussion in \cite[Section~6]{pqr:groupoid}.

\begin{cor}\label{pass}
Let $S\subset \GG(E)\times \GG(E)$ be the commuting squares of a connected $k$-graph $\Lambda$,
and let $R$ be the congruence relation on $\GG(E)$ generated by $S$.
Then for any vertex $u\in\Lambda^0$ we have
\[
\pi_1(\Lambda,u)\cong \pi_1(E,u)/K,
\]
where $K$ is the normal subgroup of $\pi(E,u)$ generated by the set
\[
\{\kappa_{r(\alpha)}\inv\alpha\beta\inv\kappa_{r(\alpha)}:(\alpha,\beta)\in S\}.
\]
\end{cor}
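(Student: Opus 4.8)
The plan is to recognize this corollary as an essentially immediate consequence of Proposition~\ref{technical}, once the correct dictionary between fundamental groups and isotropy groups of fundamental groupoids is in place. Recall that $\pi_1(\Lambda, u)$ is by definition the isotropy group $\GG(\Lambda)(u)$ of the fundamental groupoid at $u$, and likewise $\pi_1(E, u) = \GG(E)(u)$. So the statement to be proved is precisely $\GG(\Lambda)(u) \cong \GG(E)(u)/K$, and the strategy is to factor this through the groupoid-level identification $\GG(\Lambda) \cong \GG(E)/R$.

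First I would invoke the isomorphism $\GG(\Lambda) \cong \GG(E)/R$ recorded in the paragraph preceding the corollary (derived from \cite[Theorem~5.5]{pqr:groupoid}). This is an isomorphism of groupoids over the common unit space $\Lambda^0 = E^0$, and the quotient functor $Q \colon \GG(E) \to \GG(E)/R$ fixes each vertex because $R$ satisfies \eqref{fibre}; hence the isomorphism restricts to an isomorphism of isotropy groups at $u$, giving $\GG(\Lambda)(u) \cong (\GG(E)/R)(u)$. Writing $\HH = \GG(E)/R$, the task reduces to computing $\HH(u)$.

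Next I would apply Proposition~\ref{technical} with $\GG = \GG(E)$ and $S$ the set of commuting squares. Two hypotheses need checking. First, the standing connectedness assumption: since $\Lambda$ is connected its $1$-skeleton $E$ is connected, so $\GG(E)$ is connected in the required sense and the family $(\kappa_v)_{v \in E^0}$ with $\kappa_u = u$ exists. Second, $S$ must satisfy \eqref{fibre}: each commuting square is a pair $(ef, gh)$ of edge-paths representing a single morphism $\lambda$ of $\Lambda$, so $ef$ and $gh$ share the range $r(\lambda)$ and source $s(\lambda)$, as required. Proposition~\ref{technical} then yields $\HH(u) = \GG(E)(u)/K$, where $K$ is the normal subgroup generated by exactly the set \eqref{connect squares}, which is the set displayed in the corollary. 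Combining this with the previous paragraph gives $\pi_1(\Lambda, u) \cong \pi_1(E, u)/K$.

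I do not anticipate a substantive obstacle here, since the real work is carried out in Proposition~\ref{technical}; the corollary is a packaging of that result for the specific groupoid $\GG(E)$. The only points deserving care are confirming that $\pi_1$ is being identified with groupoid isotropy on both sides, and verifying that the groupoid isomorphism $\GG(\Lambda) \cong \GG(E)/R$ is compatible with the basepoint $u$ (i.e.\ is the identity on the shared unit space, so that it genuinely \emph{restricts} to the isotropy subgroups at $u$ rather than merely carrying one isotropy group to a conjugate of another). Once this compatibility is noted, the two displayed isomorphisms compose to complete the argument.
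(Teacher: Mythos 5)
Your proposal is correct and follows exactly the paper's route: the paper's own proof is the one-line statement that the corollary follows immediately from Proposition~\ref{technical}, applied via the identification $\GG(\Lambda)\cong\GG(E)/R$ noted just before the corollary. Your additional checks (that $S$ satisfies \eqref{fibre}, that $E$ is connected, and that the groupoid isomorphism fixes the unit space so it restricts to isotropy groups at $u$) are precisely the routine details the paper leaves implicit.
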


\begin{proof}
This follows immediately from \propref{technical}.
\end{proof}

We now proceed toward our main result on the fundamental groups, namely
$\pi_1(\Lambda,u)\cong \pi_1(X_\Lambda,u)$.

First, some notation: for each $n\ge 1$ and each commuting $n$-cube $\lambda\in\Lambda$, let
$f_\lambda$ be the associated map attaching an $n$-cell to $X^{n-1}$ in the formation of $X^n$. Let
$Q^n$ be the open unit cube in $\R^n$. Recall that
\[
Q_\lambda=f_\lambda(Q^n)\quad\text{and}\quad
\bar Q_\lambda=f_\lambda(\bar{Q^n}).
\]
Moreover, if $e \in E^1$ then the homotopy class $[f_e]$ may be regarded as an element of
$\GG(X^1)$, the fundamental groupoid of the $1$-skeleton.

\begin{lem}\label{1 isomorphism}
Define $\theta:E^1\to\GG(X^1)$ by
\[
\theta(e)=[f_e].
\]
Then $\theta$ extends to a groupoid homomorphism of $\GG(E)$ into $\GG(X^1)$, and for each
$u\in\Lambda^0$, $\theta$ restricts to an isomorphism $\theta:\pi_1(E,u)\to \pi_1(X^1,u)$.
\end{lem}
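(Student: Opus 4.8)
The plan is to recognise $X^1$ as the geometric realization of the graph $E$ and to exploit that both $\GG(E)$ and $\GG(X^1)$ are free groupoids built from the same edge data. First I would record, using Lemma~\ref{lem:CW structure} and the equivalence relation \eqref{eq:equiv rel}, that the $0$-cells of $X^1$ are the vertices $v \in \Lambda^0$ and that for each $e \in E^1$ the closed cell $\bar{Q}_e$ is an arc whose endpoints are $[e,0] = r(e)$ and $[e,d(e)] = s(e)$; thus the characteristic map $f_e$ is a path in $X^1$ joining $r(e)$ to $s(e)$, and $[f_e]$ is a morphism of $\GG(X^1)$ with the same source and range as the edge $e$. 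Since $\GG(E)$ is the free groupoid on the directed graph $E$, any assignment of each edge to a morphism of a target groupoid with matching source and range extends uniquely to a groupoid homomorphism; applying this to $e \mapsto [f_e]$ (and $v \mapsto v$ on objects) gives the required extension $\theta$ and establishes the first assertion.

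To prove that $\theta$ restricts to an isomorphism of vertex groups, I would fix the basepoint $u$ together with a maximal tree $T \subseteq E^1$, and for each $v \in \Lambda^0$ let $\tau_v$ denote the unique reduced path in $T$ from $u$ to $v$, with $\tau_u = u$. The standard structure theory of the fundamental group of a graph then presents $\pi_1(E,u) = \GG(E)(u)$ as the free group with basis $\{\gamma_e : e \in E^1 \minus T\}$, where $\gamma_e = \tau_{r(e)}\,e\,\tau_{s(e)}\inv$.

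On the topological side I would use that $\bigcup_{e \in T}\bar{Q}_e$ is a maximal tree in $X^1$, so \cite[Theorem~VI.5.2]{massey} (the third quoted result) presents $\pi_1(X^1,u)$ as the free group of rank $|E^1 \minus T|$, a free basis being given, for each $e \in E^1 \minus T$, by the loop that runs along the tree from $u$ to $r(e)$, across $f_e$, and back along the tree to $u$. Because $\theta$ is a homomorphism and each $\tau_v$ is a product of (inverse) tree edges, $\theta(\tau_v)$ is exactly the class of the tree path in $\bigcup_{e\in T}\bar{Q}_e$ from $u$ to $v$; consequently $\theta(\gamma_e) = \theta(\tau_{r(e)})\,[f_e]\,\theta(\tau_{s(e)})\inv$ coincides with the topological basis loop attached to $e$. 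Hence $\theta$ carries the free basis $\{\gamma_e\}$ of $\pi_1(E,u)$ bijectively onto the free basis of $\pi_1(X^1,u)$, and a homomorphism of free groups sending one free basis bijectively onto another is an isomorphism.

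The step I expect to be the main obstacle is the compatibility bookkeeping: I must use the \emph{same} maximal tree on the algebraic and topological sides and check that $\theta$ intertwines the combinatorial tree paths $\tau_v$ with the topological tree paths implicitly used in \cite[Theorem~VI.5.2]{massey} to build its free basis. Once that matching is verified the isomorphism is immediate, since everything else rests on the universal property of the free groupoid $\GG(E)$ and on the classical freeness of fundamental groups of graphs.
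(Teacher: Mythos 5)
Your proof is correct, and it is worth comparing with the paper's, which is much terser: the paper disposes of the extension $\theta$ by citing the techniques of the proof of \cite[Theorem~3.7.3]{spanier}, and then justifies the second assertion only by observing that $\pi_1(E,u)$ and $\pi_1(X^1,u)$ are both free of the same rank. Your argument follows the same general route (free groupoid on the edges of $E$, plus freeness of fundamental groups of graphs), but where the paper counts ranks, you do the genuinely necessary work: you fix one maximal tree $T$, use it simultaneously on both sides, and verify that $\theta$ carries the combinatorial free basis $\{\gamma_e : e \in E^1 \setminus T\}$ bijectively onto the topological free basis coming from \cite[Theorem~VI.5.2]{massey}. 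This is a real improvement in completeness, since equality of ranks alone does not show that the \emph{given} homomorphism $\theta$ is an isomorphism (for infinite rank, free groups are not even Hopfian, so a surjection of equal-rank free groups need not be injective; the skeleton $E$ may well have infinitely many edges). The only blemish is notational: with the paper's composition convention, in which $\kappa_v \in v\GG u$ and conjugates appear as $\kappa_{r(\alpha)}\inv\alpha\beta\inv\kappa_{r(\alpha)}$, and since $f_e$ runs from $r(e)$ to $s(e)$, your basis elements should read $\gamma_e = \tau_{r(e)}\inv\, e\, \tau_{s(e)}$ rather than $\tau_{r(e)}\, e\, \tau_{s(e)}\inv$; this is purely a matter of which side composition acts on and does not affect the substance of the argument.
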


\begin{proof}
This is routine, although the result in this form does not appear to be readily available in the literature.
The first part follows from the techniques in the proof of \cite[Theorem~3.7.3]{spanier}, and then the second part follows from the observations that,
since $\Lambda$ is connected, so are the 1-skeleton $E$ and the 1-dimensional CW-complex $X^1$, and
both fundamental groups $\pi_1(E,u)$ and $\pi_1(X^1,u)$ are free, with the same number of generators (see, e.g., \cite[Corollary~3.7.5]{spanier}, \cite[Theorem~6.5.2]{massey}).
\end{proof}

For the following lemma, recall from the opening of the section that $\varphi$ denotes the boundary
of the unit square in $\R^2$.

\begin{lem}\label{normal}
Let $K$ be as in \corref{pass}, and let $L$ be the normal subgroup of $\pi_1(X^1,u)$ generated by
$\{f_\lambda(\varphi):\text{$\lambda$ is a commuting square in $\Lambda$}\}$. Then $\theta(K)=L$.
\end{lem}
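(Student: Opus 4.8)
The plan is to exploit the fact, established in \lemref{1 isomorphism}, that $\theta$ restricts to a group isomorphism $\theta:\pi_1(E,u)\to\pi_1(X^1,u)$. Since a group isomorphism carries the normal subgroup generated by a subset $\mathcal S$ onto the normal subgroup generated by $\theta(\mathcal S)$, to prove $\theta(K)=L$ it suffices to show that $\theta$ carries the normal generating set $\{\kappa_{r(\alpha)}\inv\alpha\beta\inv\kappa_{r(\alpha)}:(\alpha,\beta)\in S\}$ of $K$ to a normal generating set of $L$. The crux is therefore a single identity in $\GG(X^1)$: for each commuting square, the attaching loop $f_\lambda(\varphi)$ agrees, up to a change of basepoint along $\theta(\kappa_{r(\alpha)})$, with $\theta(\alpha\beta\inv)$.

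To establish that identity I would trace the boundary of the $2$-cell $Q_\lambda$. Writing the commuting square as $\lambda=ef=gh$ with $d(e)=d(h)=e_i$ and $d(f)=d(g)=e_j$, I would use the explicit description $\bar Q_\lambda=\{[\lambda,t]:0\le t\le e_i+e_j\}$ from \lemref{lem:CW structure} to identify the four corners (at $t=0,e_i,e_j,e_i+e_j$, mapping to $r(\lambda),s(e),s(g),s(\lambda)$) and the four boundary edges. A direct computation with $\floor[\,\cdot\,]$ and $\ceil[\,\cdot\,]$ shows that the edge from $0$ to $e_i$ is $Q_e$, the edge from $e_i$ to $e_i+e_j$ is $Q_f$, the edge from $e_j$ to $e_i+e_j$ is $Q_h$, and the edge from $0$ to $e_j$ is $Q_g$. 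Thus the characteristic map $f_\lambda$ sends $\partial Q$, traversed from the corner $t=0$, successively along $f_e$, $f_f$, $f_h\inv$, and $f_g\inv$, so that
\[
f_\lambda(\varphi)=\theta(e)\theta(f)\theta(h)\inv\theta(g)\inv=\theta\big(ef(gh)\inv\big)=\theta(\alpha\beta\inv)
\]
as a loop in $\GG(X^1)$ based at $r(\lambda)=r(\alpha)$.

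With this identity in hand the conclusion is formal. Applying the groupoid homomorphism $\theta$ gives
\[
\theta\big(\kappa_{r(\alpha)}\inv\alpha\beta\inv\kappa_{r(\alpha)}\big)
    =\theta(\kappa_{r(\alpha)})\inv\,f_\lambda(\varphi)\,\theta(\kappa_{r(\alpha)}),
\]
a conjugate of $f_\lambda(\varphi)$ by the path $\theta(\kappa_{r(\alpha)})$ from $u$ to $r(\alpha)$. Hence $\theta$ sends each normal generator of $K$ to a $\pi_1(X^1,u)$-conjugate of one of the generators $f_\lambda(\varphi)$ of $L$; since a normal subgroup contains all conjugates of its generators, the two normal subgroups generated by these (conjugate) sets coincide, and $\theta(K)=L$.

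I expect the main obstacle to be the second step, namely the bookkeeping needed to match orientations: verifying that the generator $\varphi$ of $\pi_1(\partial Q)$ maps under $f_\lambda$ precisely to $ef(gh)\inv$ rather than to an inverse, a cyclic permutation, or a differently based version. Each such ambiguity in orientation, starting corner, or basepoint merely replaces $f_\lambda(\varphi)$ by a conjugate or inverse, which does not affect the normal subgroup $L$; one only has to confirm that, under the conventions fixed for the attaching maps by \lemref{lem:CW structure} and the opening of the section, the element one lands on is genuinely $\theta(\alpha\beta\inv)$ up to such harmless changes.
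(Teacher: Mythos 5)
Your proposal is correct and follows essentially the same route as the paper: the paper's proof likewise reduces to the single identity that $\theta\bigl(\kappa_{r(e)}\inv efh\inv g\inv\kappa_{r(e)}\bigr)$ equals $f_\lambda(\varphi)$ or $f_\lambda(\varphi\inv)$ for each commuting square $\lambda=ef=gh$, with the orientation ambiguity dismissed exactly as you dismiss it, since inverting or conjugating normal generators does not change the normal subgroup they generate. Your boundary-tracing computation and the remark that an isomorphism carries a normal subgroup generated by a set onto the normal subgroup generated by the image set simply make explicit the steps the paper compresses into ``it follows from the definitions \dots\ and the lemma follows.''
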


\begin{proof}
Let $\lambda=ef=gh$, where $d(e)=d(h)=e_i)$ and $d(f)=d(g)=e_j$ with $i\ne j$.
Then it follows from the definitions that
\[
\theta\bigl(\kappa_{r(e)}\inv efh\inv g\inv\kappa_{r(e)}\bigr)
\quad\text{is equal either to}\quad f_\lambda(\varphi)\quad\text{or to}\quad f_\lambda(\varphi\inv),
\]
and the lemma follows.
\end{proof}

\begin{cor}\label{isomorphism}
The isomorphism $\pi_1(E,u)\cong \pi_1(X^1,u)$ of fundamental groups of $1$-skeletons induces an
isomorphism $\pi_1(\Lambda,u)\cong \pi_1(X_\Lambda,u)$.
\end{cor}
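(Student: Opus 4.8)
The plan is to assemble the pieces already established in Corollary~\ref{pass} and Lemmas~\ref{1 isomorphism} and~\ref{normal}, together with the three topological facts quoted at the start of this section. Concretely, I would chain together the isomorphisms
\[
\pi_1(\Lambda, u) \cong \pi_1(E,u)/K \cong \pi_1(X^1, u)/L \cong \pi_1(X^2, u) \cong \pi_1(X_\Lambda, u)
\]
and then verify that the composite is precisely the map induced by the $1$-skeleton isomorphism $\theta$.

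First I would invoke Corollary~\ref{pass} to write $\pi_1(\Lambda,u) \cong \pi_1(E,u)/K$, with $K$ the normal subgroup described there. Since Lemma~\ref{1 isomorphism} provides an isomorphism $\theta:\pi_1(E,u)\to\pi_1(X^1,u)$, this descends to an isomorphism $\pi_1(E,u)/K \cong \pi_1(X^1,u)/\theta(K)$, and Lemma~\ref{normal} identifies $\theta(K)$ with the normal subgroup $L$ of $\pi_1(X^1,u)$ generated by the loops $f_\lambda(\varphi)$.

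The crux is then to recognize $L$ as the kernel of the inclusion-induced map on the topological side. By the corollary following Lemma~\ref{lem:CW structure}, the open $2$-cells of $X_\Lambda$ are exactly the $Q_\lambda$ with $d(\lambda)\le\One{k}$ and $|d(\lambda)|=2$; since $d(\lambda)\le\One{k}$ forces each coordinate into $\{0,1\}$, these are precisely the $\lambda$ with $d(\lambda)=e_i+e_j$ for some $i\ne j$, i.e.\ the commuting squares, and their attaching maps are the $f_\lambda$. Hence topological fact~(2) (Massey~VII.2.1) says that $\iota_*:\pi_1(X^1,u)\to\pi_1(X^2,u)$ is surjective with kernel exactly $L$, giving $\pi_1(X^1,u)/L\cong\pi_1(X^2,u)$, and topological fact~(1) (Massey~VII.4.1) gives $\pi_1(X^2,u)\cong\pi_1(X_\Lambda,u)$. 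Composing the four isomorphisms yields the result.

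The one point I would be most careful about — though I expect no genuine obstacle — is the basepoint bookkeeping. Both $L$ and the Massey kernel are \emph{normal} subgroups generated by loops $f_\lambda(\varphi)$ that are a priori based at points other than $u$, so realizing them in $\pi_1(X^1,u)$ requires a choice of path to $u$. Because both descriptions take the \emph{normal closure}, this choice is irrelevant (different paths yield conjugate generators), which is exactly why the identification of $L$ with the kernel is clean; it is also why the conjugating terms $\kappa_{r(\alpha)}$ appearing in Corollary~\ref{pass} and Lemma~\ref{normal} cause no difficulty. Finally, since every isomorphism in the chain is induced either by $\theta$ or by the inclusions $X^1\hookrightarrow X^2\hookrightarrow X_\Lambda$, the composite is indeed induced by the $1$-skeleton isomorphism $\pi_1(E,u)\cong\pi_1(X^1,u)$, as the statement asserts.
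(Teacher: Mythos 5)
Your proposal is correct and takes essentially the same route as the paper: the paper's proof likewise combines Corollary~\ref{pass} ($\pi_1(\Lambda,u)\cong\pi_1(E,u)/K$), the quoted Massey facts ($\pi_1(X_\Lambda,u)\cong\pi_1(X^1,u)/L$), and Lemmas~\ref{1 isomorphism} and~\ref{normal} (the isomorphism $\theta$ carries $K$ onto $L$), so the quotient isomorphism is induced by $\theta$. The details you spell out --- that the $2$-cells of $X_\Lambda$ are exactly the commuting squares, and that normal closures make the basepoint/conjugation bookkeeping harmless --- are left implicit in the paper's two-line proof.
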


\begin{proof}
This follows from Lemmas~\ref{1 isomorphism} and \ref{normal} because
\[
\pi_1(\Lambda,u)\cong \pi_1(E,u)/K
\quad\text{and}\quad
\pi_1(X_\Lambda,u)\cong \pi_1(X^1,u)/L.
\qedhere
\]
\end{proof}

\section{Functoriality}\label{sec:functoriality}

We prove here that quasimorphisms of $k$-graphs induce continuous maps of topological realizations.
In particular, topological realization is a functor from the category of higher rank graphs and
quasimorphisms to that of topological spaces and continuous maps. For quasimorphisms which carry
edges to edges --- for example, $k$-graph morphisms --- the induced map of topological realizations
is injective if and only if the original quasimorphism is injective, and is surjective if and only
if the original quasimorphism is surjective.

Recall from \cite{KPS3} that if $\pi : \NN^k \to \NN^l$ is a homomorphism, and if $\Lambda$ is a
$k$-graph and $\Gamma$ an $l$-graph, then a $\pi$-quasimorphism from $\Lambda$ to $\Gamma$ is a
functor $\phi : \Lambda \to \Gamma$ such that $d(\phi(\lambda)) = \pi(d(\lambda))$ for all $\lambda
\in \Lambda$.

\begin{prp}\label{prp:functorial}
Let $\Lambda$ be a $k$-graph and $\Gamma$ an $l$-graph. Fix a homomorphism $\pi : \NN^k \to \NN^l$.
Extend this to a homomorphism $\tpi : \RR^k \to \RR^l$ by $\tpi(t) = \sum^k_{i=1} t_i \pi(e_i)$.
Suppose that $\varphi : \Lambda \to \Gamma$ is a $\pi$-quasimorphism. Then there is a continuous
map $\tphi : X_\Lambda \to X_\Gamma$ defined by
\[
\tphi([\lambda, t]) = [\varphi(\lambda), \tpi(t)].
\]
Moreover, if $\pi' : \NN^l \to \NN^h$ is another homomorphism, $\Sigma$ is an $h$-graph and
$\varphi' : \Gamma \to \Sigma$ is a $\pi'$-quasimorphism, then $\varphi' \circ \varphi$ is a $\pi'
\circ \pi$-quasimorphism, and $\tphi' \circ \tphi = (\varphi \circ \varphi')^{\sim}$.
\end{prp}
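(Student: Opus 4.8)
The plan is to realize $\tphi$ as the descent of a map defined on the disjoint union, reducing everything to the generating relation of \lemref{lem:sim-plified}. Write $Y_\Lambda = \bigsqcup_{\lambda \in \Lambda} \{\lambda\} \times [0, d(\lambda)]$ with quotient map $q_\Lambda : Y_\Lambda \to X_\Lambda$, and similarly $q_\Gamma : Y_\Gamma \to X_\Gamma$. I would first record three elementary facts about $\tpi$ that drive the whole argument: it is $\RR$-linear; it agrees with $\pi$ on $\NN^k$, since $\tpi(n) = \sum_i n_i \pi(e_i) = \pi(n)$; and, because each $\pi(e_i)$ lies in $\NN^l$, it is monotone on the nonnegative orthant, so $0 \le t \le d(\lambda)$ forces $0 \le \tpi(t) \le \tpi(d(\lambda)) = \pi(d(\lambda)) = d(\varphi(\lambda))$. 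Define $F : Y_\Lambda \to X_\Gamma$ by $F(\lambda, t) = [\varphi(\lambda), \tpi(t)]$. The monotonicity fact shows that $t \mapsto (\varphi(\lambda), \tpi(t))$ maps $[0, d(\lambda)]$ into the summand $\{\varphi(\lambda)\} \times [0, d(\varphi(\lambda))]$ of $Y_\Gamma$; since $\tpi$ is continuous and $q_\Gamma$ is continuous, $F$ is continuous on each summand, hence on $Y_\Lambda$.

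The crux is well-definedness, i.e.\ that $F$ descends through $q_\Lambda$. By \lemref{lem:sim-plified} it suffices to check that $F$ agrees on each generating pair $\big((\alpha\lambda\beta, t + d(\alpha)), (\lambda, t)\big)$ with $d(\lambda) \le \One{k}$ and $t \in [0, d(\lambda)]$, since agreement on generators propagates to the generated equivalence relation. I expect this to be the main obstacle, precisely because the generating relation for $\sim$ upstairs does \emph{not} transport to a generating relation downstairs: the image cube $\varphi(\lambda)$ has degree $\pi(d(\lambda))$, which need not satisfy $\pi(d(\lambda)) \le \One{l}$, so \eqref{eq:sim generator} cannot be invoked in $X_\Gamma$ and one must instead verify the defining conditions \eqref{eq:equiv rel} by hand. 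Writing $\tau = \tpi(t)$ and using $\tpi(d(\alpha)) = d(\varphi(\alpha)) \in \NN^l$, the two points become $(\varphi(\alpha)\varphi(\lambda)\varphi(\beta), \tau + d(\varphi(\alpha)))$ and $(\varphi(\lambda), \tau)$. Their coordinates differ by the integer vector $d(\varphi(\alpha))$, so the fractional parts agree and $\floor[\tau + d(\varphi(\alpha))] = \floor[\tau] + d(\varphi(\alpha))$, $\ceil[\tau + d(\varphi(\alpha))] = \ceil[\tau] + d(\varphi(\alpha))$. Applying the factorization property to $\varphi(\alpha)\varphi(\lambda)\varphi(\beta)$ and using the bounds $0 \le \tau \le d(\varphi(\lambda))$ (so that $\ceil[\tau] \le d(\varphi(\lambda))$ and the extracted segment never reads into $\varphi(\beta)$) gives
\[
\big(\varphi(\alpha)\varphi(\lambda)\varphi(\beta)\big)\big(\floor[\tau]+d(\varphi(\alpha)),\, \ceil[\tau]+d(\varphi(\alpha))\big) = \varphi(\lambda)\big(\floor[\tau], \ceil[\tau]\big),
\]
which is exactly the first clause of \eqref{eq:equiv rel}. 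Hence the two images are $\sim$-equivalent, $F$ is constant on generating pairs, and it descends to a well-defined continuous $\tphi$ by the universal property of the quotient topology.

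For the final claim I would argue on representatives, which is legitimate once well-definedness is established. That $\varphi' \circ \varphi$ is a $\pi' \circ \pi$-quasimorphism is a one-line check: it is a functor and $d(\varphi'(\varphi(\lambda))) = \pi'(\pi(d(\lambda)))$. For the identity of induced maps I would compute $(\widetilde{\varphi'} \circ \tphi)([\lambda, t]) = [\varphi'(\varphi(\lambda)), \widetilde{\pi'}(\tpi(t))]$ and $(\varphi' \circ \varphi)^{\sim}([\lambda, t]) = [\varphi'(\varphi(\lambda)), \widetilde{\pi'\circ\pi}(t)]$, so the two coincide provided $\widetilde{\pi'} \circ \tpi = \widetilde{\pi'\circ\pi}$ on $\RR^k$. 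This last equality is immediate from linearity of $\widetilde{\pi'}$ and the fact that $\widetilde{\pi'}$ restricts to $\pi'$ on $\NN^l$:
\[
\widetilde{\pi'}(\tpi(t)) = \sum_{i=1}^k t_i\, \widetilde{\pi'}(\pi(e_i)) = \sum_{i=1}^k t_i\, \pi'(\pi(e_i)) = \widetilde{\pi'\circ\pi}(t),
\]
which completes the proof.
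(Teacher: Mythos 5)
Your proof is correct, but it takes a genuinely different route from the paper's at the crucial well-definedness step. The paper verifies the defining conditions \eqref{eq:equiv rel} directly for an \emph{arbitrary} equivalent pair $(\lambda,s)\sim(\mu,t)$: this forces it to establish the chain $\pi(\floor[s]) \le \floor[\tpi(s)] \le \tpi(s) \le \ceil[\tpi(s)] \le \pi(\ceil[s])$, prove the identity $\tpi(s)-\floor[\tpi(s)] = \tpi(t)-\floor[\tpi(t)]$, and then run a two-stage factorization computation to match $\varphi(\lambda)(\floor[\tpi(s)],\ceil[\tpi(s)])$ with $\varphi(\mu)(\floor[\tpi(t)],\ceil[\tpi(t)])$ inside the longer paths $\varphi(\lambda)(\pi(\floor[s]),\pi(\ceil[s]))$. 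You instead invoke \lemref{lem:sim-plified} to reduce well-definedness to the generating pairs $\big((\alpha\lambda\beta, t+d(\alpha)),(\lambda,t)\big)$, where the floor/ceiling bookkeeping collapses: the two time-coordinates differ by the integer vector $d(\varphi(\alpha))$, so fractional parts agree and floors and ceilings translate, and a single application of the factorization property (using $\ceil[\tpi(t)]\le d(\varphi(\lambda))$, which you justify via monotonicity of $\tpi$) closes the argument. You also correctly identify and handle the one genuine subtlety of this route, namely that the image of a generating pair is \emph{not} a generating pair in $Y_\Gamma$ --- since $\pi(d(\lambda))$ need not be $\le\One{l}$ --- so \eqref{eq:sim generator} cannot be cited downstairs and the defining relation \eqref{eq:equiv rel} must be checked by hand; your check of it is valid, as is the standard observation that agreement of $F$ on a generating set propagates to the generated equivalence relation. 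Your continuity argument (descend a map that is continuous on each summand of $Y_\Lambda$ through the quotient) and your composition identity (reduce $\wilde{\pi'}\circ\tpi = (\pi'\circ\pi)^\sim$ to linearity) are essentially the paper's, the latter stated more carefully --- you even silently correct the order of composition in the final formula, which appears as $(\varphi\circ\varphi')^\sim$ in the statement but should be $(\varphi'\circ\varphi)^\sim$. The trade-off: your argument is shorter and avoids the delicate integer-part inequalities, at the cost of leaning on \lemref{lem:sim-plified}; the paper's is self-contained and produces the explicit intermediate identities for $\tpi$ versus $\floor[\,\cdot\,]$ and $\ceil[\,\cdot\,]$ as a by-product.
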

\begin{proof}
We first show that $\tphi$ is well-defined. Suppose that $(\lambda, s) \sim (\mu,t)$. We must show
that $(\varphi(\lambda), \tpi(s)) \sim (\varphi(\mu), \tpi(t))$. We have $\tpi(t) = \tpi(\floor[t])
+ \tpi(t - \floor[t])$. Since $\tpi(\floor[t]) = \pi(\floor[t]) \in \NN^l$, we have
$\bfloor[{\tpi(\floor[t]) + x}] = \pi(\floor[t]) + \floor[\tpi(x)]$ for all $x \in \RR^k$. Hence
\begin{align*}
\tpi(s) - \floor[\tpi(s)]
    &= \big(\tpi(\floor[s]) + \tpi(s - \floor[s])\big) - \bfloor[{\tpi(\floor[s]) + \tpi(s- \floor[s])}] \\
    &= \tpi(\floor[s]) + \tpi(s - \floor[s]) - \tpi(\floor[s]) - \bfloor[{\tpi(s - \floor[s])}] \\
    &= \tpi(s - \floor[s]) - \bfloor[{\tpi(s - \floor[s])}].
\end{align*}
Likewise, $\tpi(t) - \floor[\tpi(t)] = \tpi(t - \floor[t]) - \bfloor[{\tpi(t - \floor[t])}]$. Since
$(\lambda, s) \sim (\mu, t)$, we have $s - \floor[s] = t - \floor[t]$, and hence
\begin{equation}\label{eq:tpi relation}
    \tpi(s) - \floor[\tpi(s)] = \tpi(t) - \floor[\tpi(t)].
\end{equation}
So to show that $(\varphi(\lambda), \tpi(s)) \sim (\varphi(\mu), \tpi(t))$, it remains to show that
\[
    \varphi(\lambda)(\floor[\tpi(s)], \ceil[\tpi(s)]) = \varphi(\mu)(\floor[\tpi(t)], \ceil[\tpi(t)]).
\]
Since $\floor[s] \le s \le \ceil[s]$, we have $\pi(\floor[s]) \le \tpi(s) \le \pi(\ceil[s])$ and
similarly for $t$. Since $\pi(\floor[s]), \pi(\ceil[s]) \in \NN^l$, it follows from the definition
of the floor and ceiling functions that
\[
\pi(\floor[s]) \le \floor[\tpi(s)] \le \tpi(s) \le \ceil[\tpi(s)] \le \pi(\ceil[s]).
\]
Moreover, since $(\lambda,s) \sim (\mu,t)$ and since $\varphi$ is a $\pi$-quasimorphism, we have
\begin{align*}
\varphi(\lambda)(\pi(\floor[s]), \pi(\ceil[s]))
    &= \varphi(\lambda(\floor[s], \ceil[s])) \\
    &= \varphi(\mu(\floor[t], \ceil[t]))
    = \varphi(\mu)(\pi(\floor[t]), \pi(\ceil[t])).
\end{align*}
Moreover, equation~\eqref{eq:tpi relation} forces $\ceil[\tpi(s)] - \floor[\tpi(s)] =
\ceil[\tpi(t)] - \floor[\tpi(t)]$. Since $(\lambda, s) \sim (\mu,t)$ we have $s - \floor[s] = t -
\floor[t]$ and hence
\[
\floor[\tpi(s)] - \pi(\floor[s]) = \floor[\tpi(t)] - \pi(\floor[t]).
\]
So
\begin{align*}
    \varphi(\lambda)&(\floor[\tpi(s)], \ceil[\tpi(s)]) \\
        &= \big(\varphi(\lambda)(\tpi(\floor[s]), \tpi(\ceil[s]))\big)\big(\floor[\tpi(s)] - \tpi(\floor[s]), \\
            &\hskip12em\floor[\tpi(s)] - \tpi(\floor[s]) + (\ceil[\tpi(s)] - \floor[\tpi(s)])\big) \\
        &= \big(\varphi(\mu)(\tpi(\floor[t]), \tpi(\ceil[t]))\big)\big(\floor[\tpi(t)] - \tpi(\floor[t]), \\
            &\hskip12em\floor[\tpi(t)] - \tpi(\floor[t]) + (\ceil[\tpi(t)] - \floor[\tpi(t)])\big) \\
        &= \varphi(\mu)(\floor[\tpi(t)], \ceil[\tpi(t)]).
\end{align*}
Hence $\tphi$ is well-defined. To see that it is continuous, fix an open subset $U$ of $X_\Gamma$.
By definition of the quotient topology on $X_\Lambda$, to show that $\tphi^{-1}(U)$ is open, we
must show that for each $\lambda \in \Lambda$, the set $\{s \in [0, d(\lambda)] :
\tphi([\lambda,s]) \in U\}$ is open in $[0, d(\lambda)]$. Since $\tphi([\lambda,s]) =
[\varphi(\lambda), \tpi(s)]$, we have
\[
\{s \in [0, d(\lambda)] : \tphi([\lambda,s]) \in U\}
    = \tpi^{-1}\big(\{t \in [0, \pi(d(\lambda))] : [\varphi(\lambda), t] \in U\}\big).
\]
Since $U$ is open in $X_\Gamma$, the set $\{t \in [0, \pi(d(\lambda))] : [\varphi(\lambda), t] \in
U\}$ is open in $[0, \pi(d(\lambda))]$. So continuity of $\tpi$ implies that $\{s \in [0,
d(\lambda)] : \tphi([\lambda,s]) \in U\}$ is open also. That $\varphi' \circ \varphi$ is a $\pi'
\circ \pi$-quasimorphism is routine. For $[\lambda,t] \in X_\Lambda$, we have
\[
\tphi' \circ \tphi([\lambda,t])
    = \tphi'([\varphi(\lambda), \tpi(t)])
    = [\varphi' \circ \varphi(\lambda), \pi' \circ \pi(t)]
    = \big(\varphi \circ \varphi'\big)^{\sim}([\lambda,t]),
\]
which establishes the final assertion and completes the proof.
\end{proof}
\begin{rmk}
Suppose that $k = l$ and $\pi$ is the identity map, so that $\varphi : \Lambda \to \Gamma$ is a
morphism of $k$-graphs. Using the decompositions $X_\Lambda = \bigsqcup_{0 \le d(\lambda)
\le \One{k}} Q_\lambda$ and $X_\Gamma = \bigsqcup_{0 \le d(\gamma) \le \One{l}} Q_\gamma$ of
Lemma~\ref{lem:CW structure}, we see that $\tphi$ is determined by $\tphi([\lambda,t]) =
[\varphi(\lambda), t]$ whenever $d(\lambda) \le \One{k}$ and $t \in (0, d(\lambda))$.
\end{rmk}

\begin{prp}
In addition to the hypotheses of Proposition~\ref{prp:functorial}, suppose that $\pi$ is rectilinear in the
sense that each $\pi(e_i)$ has the form $n_i e_{j_i}$ for some $n_i \in \NN$ and $j_i \le l$, and
suppose also that $\varphi$ is \emph{weakly surjective} in the sense that for each $\gamma \in \Gamma$
there exists $\lambda \in \Lambda$ and $p, q \in \NN^k$ with $p \le q \le \pi(d(\lambda))$ such
that $\gamma = \varphi(\lambda)(p, q)$. Then $\tphi$ is surjective. If $k = l$ and $\pi$ is the
identity so that $\varphi$ is a $k$-graph morphism, and if $\varphi$ is injective, then
$\tphi$ is also injective.
\end{prp}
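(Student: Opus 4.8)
The plan is to treat surjectivity and injectivity separately, working throughout with the decomposition $X_\Gamma = \bigsqcup_{0 \le d(\gamma) \le \One{l}} Q_\gamma$ (and likewise for $X_\Lambda$) from \lemref{lem:CW structure}, so that every point of $X_\Gamma$ can be written as $[\gamma, t]$ with $d(\gamma) = m \le \One{l}$ and $t \in (0, m)$; by Remark~\ref{rmk:box-top} such a $t$ satisfies $\floor[t] = 0$ and $\ceil[t] = m$.

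For surjectivity, I would fix a point $[\gamma, t]$ as above and first apply weak surjectivity to write $\gamma = \varphi(\lambda)(p, q)$ for some $\lambda \in \Lambda$ and $p \le q \le \pi(d(\lambda))$; comparing degrees forces $q - p = d(\gamma) = m$. The first key step is to verify, directly from the definition of $\sim$, that $[\varphi(\lambda), p + t] = [\gamma, t]$. Indeed, writing $\tau = p + t$, the hypothesis $t \in (0, q-p)$ together with $m \le \One{l}$ gives $\floor[\tau] = p$ and $\ceil[\tau] = q$, so that $\varphi(\lambda)(\floor[\tau], \ceil[\tau]) = \varphi(\lambda)(p,q) = \gamma = \gamma(\floor[t], \ceil[t])$ while $\tau - \floor[\tau] = t = t - \floor[t]$, which is exactly the condition defining $\sim$.

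The crux, and the step where rectilinearity is indispensable, is to realize $\tau = p + t$ as $\tpi(s)$ for some $s \in [0, d(\lambda)]$; once this is done, $\tphi([\lambda, s]) = [\varphi(\lambda), \tpi(s)] = [\varphi(\lambda), \tau] = [\gamma, t]$. Writing $\pi(e_i) = n_i e_{j_i}$, the $j$-th coordinate of $\tpi(s)$ is $\sum_{i : j_i = j} n_i s_i$, so distinct coordinates of $\tpi$ depend on disjoint blocks of the variables $s_i$, and as $s$ ranges over $[0, d(\lambda)]$ the $j$-th coordinate of $\tpi(s)$ ranges over the full interval $[0, \pi(d(\lambda))_j]$. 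Since $0 \le \tau \le q \le \pi(d(\lambda))$, I can solve $\tpi(s) = \tau$ one coordinate-block at a time (filling in the $s_i$ within each block greedily, taking $s_i = 0$ whenever $n_i = 0$) to produce the required $s$. This is precisely what fails in general: a non-rectilinear $\tpi$ sends the box $[0, d(\lambda)]$ to a sheared parallelepiped that need not contain $\tau$, so I expect this solvability to be the main obstacle.

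For injectivity I take $k = l$ and $\pi = \id$, so $\tpi = \id$ and $\tphi([\lambda, t]) = [\varphi(\lambda), t]$; this part is routine. Given points $[\lambda, s]$ and $[\mu, t]$ with $d(\lambda), d(\mu) \le \One{k}$, $s \in (0, d(\lambda))$, $t \in (0, d(\mu))$ and $\tphi([\lambda, s]) = \tphi([\mu, t])$, I note that $\varphi$ is degree-preserving, so $[\varphi(\lambda), s] \in Q_{\varphi(\lambda)}$ and $[\varphi(\mu), t] \in Q_{\varphi(\mu)}$. As these equal classes must lie in a common open cube and the cubes of $X_\Gamma$ are mutually disjoint (\lemref{lem:CW structure}), we get $\varphi(\lambda) = \varphi(\mu)$, and the homeomorphism $[\gamma, r] \mapsto r$ of $Q_{\varphi(\lambda)}$ onto $(0, d(\varphi(\lambda)))$ then forces $s = t$. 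Injectivity of $\varphi$ gives $\lambda = \mu$, hence $[\lambda, s] = [\mu, t]$. Everything here beyond the $\tpi(s) = \tau$ solvability is bookkeeping with $\sim$ and the cube decomposition.
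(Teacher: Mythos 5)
Your proposal is correct and follows essentially the same route as the paper: weak surjectivity yields $[\gamma,t]=[\varphi(\lambda),p+t]$, and rectilinearity is used to solve $\tpi(s)=p+t$ with $s\in[0,d(\lambda)]$ --- the paper does this via the explicit proportional choice $s=\sum_i \alpha_{j_i}d(\lambda)_i e_i$ with $\alpha_h=(p+t)_h/\pi(d(\lambda))_h$, of which your block-wise existence argument is an equivalent (and, in the degenerate case $\pi(d(\lambda))_h=0$, slightly more careful) version. Your injectivity argument via the disjoint open-cube decomposition of \lemref{lem:CW structure} is a minor repackaging of the paper's direct appeal to the definition of $\sim$ together with injectivity of $\varphi$, and is equally valid.
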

\begin{proof}
Suppose that $\pi$ is rectilinear and $\varphi$ is weakly surjective. Fix $[\gamma,t] \in
X_\Gamma$. Fix $\lambda \in \Lambda$ and $p \in \NN^l$ such that $\varphi(\lambda)(p, p +
d(\gamma)) = \gamma$. Then $[\gamma, t] = [\varphi(\lambda), p + t]$. By hypothesis on $\pi$ we
have $\pi(d(\lambda)) = \sum^k_{i=1} d(\lambda)_i n_i e_{j_i}$. For $h \le l$ define $ \alpha_h =
(n + t)_h/\pi(d(\lambda))_h$. Let $s = \sum^k_{i=1} \alpha_{j_i} d(\lambda)_i e_i$. Then
\[
\tpi(s) = \sum^k_{i=1} \frac{(n+t)_{j_i}}{\pi(d(\lambda))_{j_i}} d(\lambda)_i \pi(e_i).
\]
Thus, for $h \le l$, we have
\[
\pi(s)_h
    = \sum_{j_i = h} \frac{(n+t)_h}{\pi(d(\lambda))_h} \pi(d(\lambda)_i e_i)
    = \frac{(n+t)_h}{\pi(d(\lambda))_h} \pi(d(\lambda))_h
    = (n+t)_h.
\]
Since each $\alpha_h \in [0,1]$, we have $s \in [0, d(\lambda)]$, and we have $\tphi([\lambda,s]) =
[\varphi(\lambda), n+t] = [\gamma,t]$ as required.

Now suppose that $\varphi$ is an injective $k$-graph morphism. Suppose that $\tphi([\lambda, s]) =
\tphi([\mu,t])$. Then $[\varphi(\lambda), \tpi(s)] = [\varphi(\mu), \tpi(t)]$. In particular,
$\varphi(\lambda)(\floor[s], \ceil[s]) = \varphi(\mu)(\floor[t], \ceil[t])$. Since $\varphi$ is
injective, it follows that $\lambda(\floor[s], \ceil[s]) = \mu(\floor[t], \ceil[t])$. Moreover,
the equality $[\varphi(\lambda), \tpi(s)] = [\varphi(\mu), \tpi(t)]$ implies that $s - \floor[s] = t -
\floor[t]$. Hence $[\lambda, s] = [\mu, t]$.
\end{proof}

In the preceding proposition, the hypothesis used to establish that $\tphi$ is injective could be
weakened to require just that $\pi$ maps generators to generators and $\varphi$ is injective.
However, these two hypotheses imply that $\Lambda$ consists entirely of paths of dimension at most
$l$, and that  $\pi$ is a generator-to-generator injection on degrees of such paths. So locally,
$\varphi$ is just a relabeling of an injective $k$-graph morphism. The next example shows that it
does not suffice to ask merely that each of $\pi$ and $\varphi$ be injective.

\begin{example}
Let $E$ be the $1$-graph
\[
\begin{tikzpicture}[yscale=0.5]
    \node at (-3.3,3.3) {.};
    \node at (-3.45,3.45) {.};
    \node at (-3.6,3.6) {.};
    \node[circle, inner sep=1.5pt, fill=black] (-3+) at (-3,3) {};
    \node[circle, inner sep=1.5pt, fill=black] (-2+) at (-2,2) {};
    \node[circle, inner sep=1.5pt, fill=black] (-1+) at (-1,1) {};
    \node at (-3.3,-3.3) {.};
    \node at (-3.45,-3.45) {.};
    \node at (-3.6,-3.6) {.};
    \node[circle, inner sep=1.5pt, fill=black] (-3-) at (-3,-3) {};
    \node[circle, inner sep=1.5pt, fill=black] (-2-) at (-2,-2) {};
    \node[circle, inner sep=1.5pt, fill=black] (-1-) at (-1,-1) {};
    \node[circle, inner sep=1.5pt, fill=black] (0) at (0,0) {};
    \node[circle, inner sep=1.5pt, fill=black] (1) at (1.4,0) {};
    \node[circle, inner sep=1.5pt, fill=black] (2) at (2.8,0) {};
    \node[circle, inner sep=1.5pt, fill=black] (3) at (4.2,0) {};
    \node at (4.9,0) {\dots};
    \draw[-latex] (3)--(2) node[above, pos=0.5] {$\tau_3$};
    \draw[-latex] (2)--(1) node[above, pos=0.5] {$\tau_2$};
    \draw[-latex] (1)--(0) node[above, pos=0.5] {$\tau_1$};
    \draw[-latex] (0)--(-1+) node[anchor=south west, pos=0.5] {$\mu$};
    \draw[-latex] (-1+)--(-2+) node[anchor=south west, pos=0.5] {$\alpha_1$};
    \draw[-latex] (-2+)--(-3+) node[anchor=south west, pos=0.5] {$\alpha_2$};
    \draw[-latex] (0)--(-1-) node[anchor=north west, pos=0.5] {$\nu$};
    \draw[-latex] (-1-)--(-2-) node[anchor=north west, pos=0.5] {$\beta_1$};
    \draw[-latex] (-2-)--(-3-) node[anchor=north west, pos=0.5] {$\beta_2$};
\end{tikzpicture}
\]
Define $\pi : \NN \to \NN$ by $\pi(n) = 2n$. Define $\varphi : E^* \to E^*$ by
\begin{gather*}
\varphi(\mu) = \mu\tau_1,\quad
    \varphi(\nu) = \nu\tau_1,\quad
    \varphi(\tau_i) = \tau_{2i}\tau_{2i+1},\\
    \varphi(\alpha_i) = \alpha_{2i}\alpha_{2i-1}\quad\text{and}\quad
    \varphi(\beta_i) = \beta_{2i}\beta_{2i-1}.
\end{gather*}
Then $\pi$ is injective, and $\varphi$ is an injective $\pi$-quasimorphism. However, for
$t \in [1/2,1]$, we have $\tphi(\mu,t) = [\tau_1, 2t-1] = \tphi(\nu,t)$. So $\tphi$ is not
injective.
\end{example}

The following result shows that the topological realization functor $(\Lambda\mapsto X_\Lambda,
\varphi\mapsto \wilde\varphi)$ is faithful.

\begin{lem}\label{faithful}
Let $\varphi,\psi:\Lambda\to\Gamma$ be $k$-graphs morphisms such that $\wilde\varphi=\wilde\psi$.
Then $\varphi=\psi$.
\end{lem}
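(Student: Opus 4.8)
The plan is to reduce the equality $\varphi = \psi$ of functors to the assertion that $\varphi$ and $\psi$ agree on the vertices $\Lambda^0$ and on the edges $\bigcup_{i=1}^k \Lambda^{e_i}$. This reduction is justified by the factorisation property: every $\lambda \in \Lambda$ with $d(\lambda) \neq 0$ can be written as a product $\lambda = \alpha_1 \alpha_2 \cdots \alpha_{|d(\lambda)|}$ of edges, and since $\varphi$ and $\psi$ are functors, $\varphi(\lambda) = \varphi(\alpha_1)\cdots\varphi(\alpha_{|d(\lambda)|})$ and likewise for $\psi$; so agreement on edges forces agreement on all paths of positive degree, while agreement on $\Lambda^0$ covers the degree-zero case. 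Thus it suffices to treat vertices and edges separately.

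For a vertex $u \in \Lambda^0$, recall that a $k$-graph morphism has $\pi = \mathrm{id}$, so $\wilde\varphi([u,0]) = [\varphi(u), 0]$ and $\wilde\psi([u,0]) = [\psi(u),0]$. Since $\wilde\varphi = \wilde\psi$ these points coincide; as $\varphi(u), \psi(u) \in \Gamma^0$ and, by \lemref{lem:CW structure}, the singletons $Q_v = \{[v,0]\}$, $v \in \Gamma^0$, are pairwise disjoint, I conclude $\varphi(u) = \psi(u)$. For an edge $e \in \Lambda^{e_i}$, I would test $\wilde\varphi$ and $\wilde\psi$ at an interior point $[e,t]$ with $t \in (0, e_i)$, obtaining $[\varphi(e), t] = [\psi(e), t]$. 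Because morphisms preserve degree, $\varphi(e)$ and $\psi(e)$ lie in $\Gamma^{e_i}$, so this common point belongs to both open cubes $Q_{\varphi(e)}$ and $Q_{\psi(e)}$; the disjointness of distinct open cubes in \lemref{lem:CW structure} then forces $\varphi(e) = \psi(e)$.

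I expect the genuinely load-bearing step to be the initial reduction rather than either verification: both the vertex and edge checks are immediate consequences of the disjointness of the cells $Q_\lambda$ established in \lemref{lem:CW structure}, whereas the claim that a $k$-graph morphism is determined by its values on the $1$-skeleton is where the factorisation property of $k$-graphs (and the requirement that $\varphi,\psi$ be genuine functors, not merely degree-preserving maps) is really used. Once that reduction is granted, no further computation is needed.
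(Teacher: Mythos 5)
Your proof is correct and is essentially the paper's own argument: the paper likewise observes that $\wilde\varphi=\wilde\psi$ forces agreement on the cells (hence on vertices and edges) and then invokes functoriality together with the fact that $\Lambda$ is generated by its edges. Your version merely makes explicit the appeal to the disjointness of the cubes $Q_\lambda$ from Lemma~\ref{lem:CW structure} and the factorization of paths into edges, which the paper leaves implicit.
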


\begin{proof}
The equality $\wilde\varphi=\wilde\psi$ implies that $\varphi$ and $\psi$ agree on commuting
cubes, and in particular on edges. Since $\Lambda$ is generated by its edges, $\varphi$ and $\psi$
must therefore coincide by functoriality (see Proposition~\ref{prp:functorial}).
\end{proof}

\begin{lem}\label{functor group}
Let $\varphi:\Lambda\to\Gamma$ be a morphism of $k$-graphs, and let $\wilde\varphi:X_\Lambda\to
X_\Gamma$ be the associated map between the topological realizations. Let $u\in\Lambda^0$, and let
$v=\varphi(u)$. Then the isomorphisms $\pi_1(\Lambda, u) \cong \pi_1(X_\Lambda, u)$ and
$\pi_1(\Gamma, v) \cong \pi_1(X_\Gamma, v)$ of Corollary~\ref{isomorphism} make the diagram
\[
\xymatrix{
\pi_1(\Lambda,u) \ar[r]^-\cong \ar[d]_{\varphi_*}
&\pi_1(X_\Lambda,u) \ar[d]^{\wilde\varphi_*}
\\
\pi_1(\Gamma,v) \ar[r]_-\cong
&\pi_1(X_\Gamma,v)
}
\]
commute.
\end{lem}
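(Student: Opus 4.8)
The plan is to reduce the commutativity of the square to that of the analogous square at the level of $1$-skeletons, where it can be checked directly on edge-generators. Recall from the construction in Corollary~\ref{isomorphism} that each horizontal isomorphism is obtained from the $1$-skeleton isomorphism $\theta\colon\pi_1(E,u)\to\pi_1(X^1_\Lambda,u)$ of Lemma~\ref{1 isomorphism} by passing to the quotients $\pi_1(\Lambda,u)=\pi_1(E,u)/K$ and $\pi_1(X_\Lambda,u)=\pi_1(X^1_\Lambda,u)/L$, using $\theta(K)=L$ from Lemma~\ref{normal}; I write $\theta'$, $K'$, $L'$, $E'$ for the corresponding data attached to $\Gamma$. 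Since $\varphi$ is a $k$-graph morphism it carries edges to edges and commuting squares to commuting squares, so it restricts to a graph morphism $E\to E'$ inducing a map on $\pi_1(E,u)$, while $\wilde\varphi$ restricts to a continuous map $X^1_\Lambda\to X^1_\Gamma$. Indeed, by the Remark following Proposition~\ref{prp:functorial}, $\wilde\varphi$ sends each $r$-cell $Q_\lambda$ to the $r$-cell $Q_{\varphi(\lambda)}$, so $\wilde\varphi$ preserves the skeletal filtration. Thus it suffices to prove commutativity of the $1$-skeleton square relating $\theta$, $\theta'$, the graph-induced map, and $(\wilde\varphi|_{X^1})_*$, and then to check that this square descends through the respective quotients.

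The main step is the $1$-skeleton square, and here the point is that it already commutes on generators. For an edge $e\in E^1$ of degree $e_i$ we may take the characteristic map to be $f_e(\tau)=[e,\tau e_i]$ for $\tau\in[0,1]$, so that $\theta(e)=[f_e]$. Because $\varphi$ is a morphism (so $\pi=\operatorname{id}$ and $\tpi=\operatorname{id}$), Proposition~\ref{prp:functorial} gives $\wilde\varphi([e,\tau e_i])=[\varphi(e),\tau e_i]$; since $\varphi(e)$ is again an edge of degree $e_i$, this says exactly that $\wilde\varphi\circ f_e=f_{\varphi(e)}$. Hence
\[
(\wilde\varphi|_{X^1})_*\bigl(\theta(e)\bigr)=[\wilde\varphi\circ f_e]=[f_{\varphi(e)}]=\theta'\bigl(\varphi(e)\bigr),
\]
which is precisely $\theta'$ applied to the image of $e$ under the graph-induced map. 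As $\theta$, $\theta'$, and the two induced maps are all homomorphisms determined by their values on edges, the $1$-skeleton square commutes.

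It then remains to descend to quotients. The $2$-cell version of the computation above, namely $\wilde\varphi\circ f_\lambda=f_{\varphi(\lambda)}$ for each commuting square $\lambda$, shows that $(\wilde\varphi|_{X^1})_*$ carries the generator $f_\lambda(\varphi)$ of $L$ to the generator $f_{\varphi(\lambda)}(\varphi)$ of $L'$, so $(\wilde\varphi|_{X^1})_*(L)\subseteq L'$; likewise, since $\varphi$ sends commuting squares to commuting squares, the graph-induced map carries $K$ into $K'$ (the ambiguity in the choices of connecting paths $\kappa_v$ being absorbed by the normality of $K'$, exactly as in the proof of Proposition~\ref{technical}). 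Combined with $\theta(K)=L$ and $\theta'(K')=L'$, the commuting $1$-skeleton square descends to the stated square of quotients.

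I expect the main obstacle to be bookkeeping rather than anything conceptual: one must confirm that $\wilde\varphi_*$ and the algebraic $\varphi_*$ are genuinely the maps induced from the $1$-skeleton level. On the topological side this follows from the naturality of the inclusion-induced isomorphisms $\pi_1(X^1)/L\cong\pi_1(X^2)\cong\pi_1(X_\Lambda)$ of Massey's theorems, using that $\wilde\varphi$ preserves the skeletal filtration so that all the inclusion squares commute by functoriality. On the algebraic side it follows from the naturality of the presentation $\GG(\Lambda)\cong\GG(E)/R$ obtained from \cite[Theorem~5.5]{pqr:groupoid}, together with the functoriality recorded in Proposition~\ref{prp:functorial}.
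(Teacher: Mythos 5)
Your proof is correct and follows essentially the same route as the paper's: restrict $\varphi$ and $\wilde\varphi$ to the $1$-skeletons, verify compatibility of the induced maps with the isomorphism $\theta$ of Lemma~\ref{1 isomorphism}, and descend through the quotients by $K$ and $L$ provided by Corollary~\ref{isomorphism} (via Lemma~\ref{normal}). The only difference is one of detail: you spell out the generator-level computation $\wilde\varphi\circ f_e=f_{\varphi(e)}$, the containments $(\wilde\varphi|_{X^1})_*(L)\subseteq L'$ and $\varphi^1_*(K)\subseteq K'$, and the naturality bookkeeping that the paper's brief proof leaves implicit.
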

\begin{proof}
Since $\varphi$ is a $k$-graph morphism, it restricts to a morphism $\varphi^1 : E_\Lambda \to
E_\Gamma$ of $1$-skeletons. Proposition~\ref{prp:functorial} implies that $\varphi^1$ induces a
homomorphism $\varphi^1_* : \pi_1(E_\Lambda) \to \pi_1(E_\Gamma)$. Lemma~\ref{1 isomorphism} shows
that $\varphi^1_*$ is compatible with the induced homomorphism $\wilde{\varphi}^1_* :
\pi_1(X_\Lambda^1) \to \pi_1(X_\Gamma^1)$. The result then follows from
Corollary~\ref{isomorphism}.
\end{proof}

\section{Topological realizations and coverings of higher-rank graphs}\label{sec:coverings}

We investigate the relationship between covering maps in the algebraic and topological senses.
We will assume throughout this section that all $k$-graphs are connected and all spaces are connected CW-complexes.

Let $\Lambda$ be a $k$-graph. Recall from \cite{pqr:cover} that a \emph{covering} of $\Lambda$ is a
surjective $k$-graph morphism $p:\Omega\to\Lambda$ such that for all $v\in\Omega^0$, $p$ maps
$\Omega v$ bijectively onto $\Lambda p(v)$ and maps $v\Omega$ bijectively onto $p(v)\Lambda$.

Our main purpose here is to prove the following theorem.

\begin{thm}\label{cover}
If $p:\Omega\to\Lambda$ is a covering of $k$-graphs, then $\wilde p:X_\Omega\to
X_\Lambda$ is a covering map of the topological realizations.
\end{thm}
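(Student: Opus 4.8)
The plan is to verify directly from the definition of a covering map that every point of $X_\Lambda$ has an evenly-covered open neighborhood. Since $\wilde p$ is continuous and surjective by Proposition~\ref{prp:functorial} (a covering is in particular a surjective $k$-graph morphism, hence weakly surjective with $\pi = \id$), the work lies in the local triviality. I would organize the argument around the CW-structure from Lemma~\ref{lem:CW structure}, treating separately the cells $Q_\lambda$ for $\lambda$ with $d(\lambda) \le \One{k}$, but anchoring the whole analysis at the vertices $u \in \Lambda^0$, since the covering condition on $p$ is phrased in terms of the bijections $\Omega v \to \Lambda p(v)$ and $v\Omega \to p(v)\Lambda$.

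\smallskip

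First I would choose, for each vertex $u \in \Lambda^0$, a ``star'' neighborhood $N_u \subseteq X_\Lambda$ consisting of $u$ together with the half-open portions of all open cubes $Q_\lambda$ that have $u$ as the image of a corner; concretely, $N_u$ is the union of $\{u\}$ with all $\{[\lambda,t] : t \in (0,d(\lambda)), \text{$t$ near the corner corresponding to $u$}\}$. The key point is that the covering condition guarantees that for each $v \in p^{-1}(u)$, the map $p$ induces a bijection between the cubes $\mu \in \Omega$ incident to $v$ (at the relevant corner) and the cubes $\lambda \in \Lambda$ incident to $u$. This should follow by an induction on $|d(\lambda)|$: the case of edges is exactly the definition of a covering, and the factorization property together with the edge-bijections should let me lift and match higher cubes uniquely. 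Having established this cube-by-cube bijection, I would then argue that $\wilde p^{-1}(N_u) = \bigsqcup_{v \in p^{-1}(u)} N_v$ and that $\wilde p$ restricts to a homeomorphism $N_v \to N_u$ on each sheet, using the explicit formula $\wilde p([\mu,t]) = [p(\mu), t]$ from the Remark following Proposition~\ref{prp:functorial}.

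\smallskip

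The main obstacle I anticipate is the bookkeeping needed to show that these star neighborhoods are genuinely \emph{open} and that their preimages split as disjoint unions of homeomorphic sheets. Openness must be checked against the characterization in Lemma~\ref{lem:CW structure} (a set is open iff its intersection with each skeleton $X^r_\Lambda$ is relatively open), which forces me to verify the even-covering property compatibly across all dimensions at once rather than one cube at a time. The disjointness of the sheets $N_v$ for distinct $v \in p^{-1}(u)$ is where injectivity of a covering (which follows since $p$ is a bijection on each $\Omega v$ and on each $v\Omega$) must be exploited carefully: I would need to rule out that two cubes at different lifted vertices $v, v'$ get glued together in $X_\Omega$, which again reduces to the uniqueness in the factorization property and the local bijectivity of $p$.

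\smallskip

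An alternative, possibly cleaner, route would be to invoke the category equivalence machinery the paper is building toward, together with Corollary~\ref{isomorphism} and Lemma~\ref{functor group}: one could try to show that $\wilde p$ is a covering by matching it with the covering classified by the appropriate subgroup of $\pi_1(X_\Lambda, u)$. However, since Theorem~\ref{cover} is presumably the foundational input to that equivalence, I expect the paper to prove it directly by the local-triviality argument above, so I would commit to the star-neighborhood approach and reserve the homotopy-theoretic characterization for the later category-equivalence results.
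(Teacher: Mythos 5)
Your overall strategy---directly exhibiting evenly covered neighborhoods, using the formula $\wilde p([\mu,t])=[p(\mu),t]$ and the cube-lifting bijections coming from the covering axioms---is the same as the paper's, and your lifting claim is correct (it needs no induction: for a cube $\lambda$ with corner vertex $u$ at position $c$ and any $v\in p^{-1}(u)$, the factorization $\lambda=\lambda(0,c)\,\lambda(c,d(\lambda))$ together with the bijections $\Omega v\to\Lambda u$ and $v\Omega\to u\Lambda$ produces the unique lift of $\lambda$ having $v$ at the corresponding corner). But there is a genuine gap: your open sets are anchored only at vertices, and vertex-anchored stars cannot form an open cover of $X_\Lambda$. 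For the sheets over $N_u$ to be disjoint, ``near the corner'' must mean within distance strictly less than $1/2$ in each thick coordinate (this is exactly how disjointness is forced: two lifts of $x$ sitting in the same cube of $\Omega$ differ by an integer in some coordinate, and two open half-unit intervals around distinct integers are disjoint). Now take $\Lambda$ to be the $1$-graph with one vertex $u$ and one loop $e$, so that $X_\Lambda$ is a circle. If $N_u$ were all of $X_\Lambda$, it would not be evenly covered by the connected double cover; so $N_u$ must omit some point, necessarily an interior point of $Q_e$ such as $[e,1/2]$. Since $u$ is the only vertex, your neighborhoods then fail to cover $X_\Lambda$, and no choice of ``near'' fixes this: you cannot simultaneously have every $N_u$ evenly covered and $\bigcup_{u\in\Lambda^0}N_u=X_\Lambda$. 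The same problem occurs at centers of squares and, generally, deep in the interior of every cell.

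The paper closes exactly this hole by defining a neighborhood $N_x$ for \emph{every} point $x\in X_\Lambda$, not just for vertices: writing $x=[\lambda,t]$ with $\lambda$ the unique cube satisfying $x\in Q_\lambda$ (Lemma~\ref{lem:CW structure}), the set $N_x$ consists of all $[\mu,s]$ with $d(\mu)\le\One{k}$, $x\in\bar Q_\mu$, $0<s_i<1$ in the coordinates where $d(\lambda)_i=1$, and $|s_i-r_i|<1/2$ (where $[\mu,r]=x$) in the coordinates where $d(\lambda)_i=0$ but $d(\mu)_i=1$. When $x$ is a vertex this is precisely your star; when $x$ lies in the interior of a top-dimensional cube it is just $Q_\lambda$; the intermediate cases interpolate between the two. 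With that definition, openness (checked cell by cell against the weak topology, as you anticipated) and even covering (disjointness of sheets via the half-width $1/2$, plus the unique-lift observation applied to the carrier cube of each preimage point rather than to a vertex) go through essentially as you describe. So the repair is to anchor the analysis at the unique open cube containing each point rather than at the vertices; as written, your cover does not (literally) cover $X_\Lambda$. Your decision to avoid the $\pi_1$-classification route was sound, since Theorem~\ref{cover} is indeed an input to the later category equivalence.
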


We know that $\wilde p$ is a continuous surjection.
We must show that $X_\Lambda$ is covered by open sets $U$ that are \emph{evenly covered}, i.e., $\wilde p\inv(U)$ is a disjoint union of open sets that $\wilde p$ maps homeomorphically onto $U$.

\begin{obs}\label{coordinates}
Let $x=[\lambda,t]\in X_\lambda$ with $d(\lambda) \le \One k$ and $t\in Q_\lambda$.
\begin{enumerate}
\item It follows from the covering property of $p$ that for each $y\in p\inv(x)$
    there is a unique $\nu$ with $d(\nu) \le \One{k}$ such that $y\in Q_\nu$ and
    $p(\nu)=\lambda$.

\item For each $i=1,\dots,k$, we have $0<t_i<1$ if $d(\lambda)_i=1$, and $t_i=0$ if
    $d(\lambda)_i=0$.

\item Suppose $d(\mu) \le \One{k}$. Then $x\in\bar Q_\mu$ if and only if there exists $s\le
    d(\mu)$ such that $[\lambda,t]=[\mu,s]$, in which case we have
\begin{enumerate}
\item $s_i=t_i$ if $d(\lambda)_i=1$;
\item $s_i\in \{0,1\}$ if $d(\lambda)_i=0$ and $d(\mu)_i=1$.
\end{enumerate}
\end{enumerate}
\end{obs}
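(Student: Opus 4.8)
The plan is to verify each of the three numbered assertions of Observation~\ref{coordinates} by unwinding the definitions of the equivalence relation $\sim$ from \eqref{eq:equiv rel} and the structural facts established in Lemma~\ref{lem:CW structure}, together with the defining covering property of $p$. Throughout I fix $x = [\lambda, t]$ with $d(\lambda) \le \One{k}$ and $t \in Q_\lambda$, so that by Remark~\ref{rmk:box-top} we have $\floor[t] = 0$ and $\ceil[t] = d(\lambda)$. Parts (2) and (3) are essentially bookkeeping about floors and ceilings inside the cube, while part (1) is where the covering hypothesis genuinely enters.

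**First I would** dispatch part (2). Since $t \in Q_\lambda = \{[\lambda, s] : s \in (0, d(\lambda))\}$ and the coordinate projection of $(0, d(\lambda))$ onto the $i$\textsuperscript{th} factor is $(0,1)$ when $d(\lambda)_i = 1$ and $\{0\}$ when $d(\lambda)_i = 0$ (as recorded in the discussion of relatively open intervals preceding Remark~\ref{rmk:box-top}), the claim $0 < t_i < 1$ for $d(\lambda)_i = 1$ and $t_i = 0$ for $d(\lambda)_i = 0$ is immediate.

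**Next I would** treat part (3). The statement $x \in \bar Q_\mu$ is, by the description $\overline{Q}_\mu = \{[\mu, s] : s \in [0, d(\mu)]\}$ from Lemma~\ref{lem:CW structure}, exactly the assertion that there exists $s \le d(\mu)$ with $[\lambda, t] = [\mu, s]$. For the coordinate relations (3a) and (3b), I unwind \eqref{eq:equiv rel}: from $[\lambda, t] = [\mu, s]$ we get $t - \floor[t] = s - \floor[s]$, i.e.\ $t = s - \floor[s]$ since $\floor[t] = 0$. When $d(\lambda)_i = 1$ we have $0 < t_i < 1$ by part (2), so $\floor[s]_i = 0$ and hence $s_i = t_i$, giving (3a). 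When $d(\lambda)_i = 0$ and $d(\mu)_i = 1$, we have $t_i = 0$, forcing $s_i = \floor[s]_i \in \{0, 1\}$ (it cannot be strictly between two integers since $s_i \le d(\mu)_i = 1$ and $s_i - \floor[s]_i = t_i = 0$), yielding (3b).

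**The main obstacle**, and the part I would spend the most care on, is part (1): the existence and uniqueness of $\nu$ with $d(\nu) \le \One{k}$, $y \in Q_\nu$, and $p(\nu) = \lambda$ for each $y \in \wilde p^{-1}(x)$. For existence, Lemma~\ref{lem:CW structure} guarantees that $y$ lies in \emph{some} open cube $Q_\nu$ with $d(\nu) \le \One{k}$; writing $y = [\nu, u]$ with $u \in (0, d(\nu))$, the relation $\wilde p([\nu, u]) = [p(\nu), u] = [\lambda, t]$ forces $p(\nu)(\floor[u], \ceil[u]) = \lambda$, and since $\floor[u] = 0$, $\ceil[u] = d(\nu)$, this reads $p(\nu) = \lambda$, whence also $d(\nu) = d(\lambda)$ and $u = t$. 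For uniqueness, suppose $y \in Q_\nu \cap Q_{\nu'}$ with $p(\nu) = p(\nu') = \lambda$; the disjointness of distinct open cubes (Lemma~\ref{lem:CW structure}) already forces $\nu = \nu'$, so strictly speaking the covering property is what ensures the fibre decomposes cube-by-cube in the clean way needed downstream. Here I would invoke the defining bijectivity of $p$ on $\Omega v \to \Lambda p(v)$ and $v\Omega \to p(v)\Lambda$ to argue that the assignment $y \mapsto \nu$ is exactly the lift dictated by the covering, confirming that each preimage point sits in a uniquely determined open cube mapping onto $Q_\lambda$ under $\wilde p$.
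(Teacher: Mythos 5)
Your proof is correct. The paper gives no proof of this Observation, treating it as immediate from Remark~\ref{rmk:box-top}, Lemma~\ref{lem:CW structure} and the definition of $\sim$ in~\eqref{eq:equiv rel}, and your verification is exactly that routine unwinding; as you note in passing, part (1) in fact needs only that $p$ is a degree-preserving morphism (so that $\wilde p([\nu,u])=[p(\nu),u]$) together with disjointness of the open cubes, so the covering property enters less essentially than the paper's phrasing suggests.
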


\begin{defn}
Fix $\lambda \in \Lambda$ with $d(\lambda) \le \One{k}$ and $t \in (0, d(\lambda))$. Let $x =
[\lambda, t] \in Q_\lambda$. We define $N_x$ to be the set of all $[\mu,s]\in X_\Lambda$ satisfying
the following conditions:
\begin{enumerate}
\item $d(\mu) \le \One{k}$,
\item $x\in \bar Q_\mu$,
\item $0<s_i<1$ if $d(\lambda)_i=1$, and
\item $|s_i-r_i|<1/2$ if $[\mu,r]=[\lambda,t]$, $d(\lambda)_i=0$, and $d(\mu)_i=1$.
\end{enumerate}
\end{defn}

\begin{lem}\label{open}
$N_x$ is an open neighborhood of $x$.
\end{lem}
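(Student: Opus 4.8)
The plan is to use the quotient-map description of the topology on $X_\Lambda$. Write $q : \bigsqcup_{\lambda \in \Lambda} \{\lambda\} \times [0, d(\lambda)] \to X_\Lambda$ for the quotient map. By definition of the quotient topology, $N_x$ is open if and only if, for every $\gamma \in \Lambda$, the set $A_\gamma := \{s \in [0, d(\gamma)] : [\gamma, s] \in N_x\}$ is relatively open in $[0, d(\gamma)]$. So it suffices to fix $\gamma$ and show that $A_\gamma$ is open.

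Before doing this I would reduce membership in $N_x$ to a condition on the canonical cube of a point. For $y \in X_\Lambda$ let $\nu$ be the unique cube with $y \in Q_\nu$ furnished by Lemma~\ref{lem:CW structure}; concretely, if $y = [\gamma, s]$ then $\nu = \gamma(\floor[s], \ceil[s])$. For each cube $\nu$ with $d(\nu) \le \One{k}$ and $x \in \bar Q_\nu$, let $r^\nu \le d(\nu)$ be the position of $x$ in $\bar Q_\nu$ given by Observation~\ref{coordinates}(3), and let $V_\nu \subseteq [0,d(\nu)]$ be the set cut out by conditions (3) and (4). I would first prove the clean characterization
\[
y \in N_x \iff x \in \bar Q_\nu \text{ and the coordinates of } y \text{ in } \bar Q_\nu \text{ lie in } V_\nu,
\]
where $\nu$ is the canonical cube of $y$. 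One implication is immediate, taking the witnessing cube to be $\nu$ itself. For the other, given any witnessing cube $\mu$ with $y \in V_\mu$, a short computation with floors and ceilings shows that $\nu = \mu(\floor[\sigma],\ceil[\sigma])$ for the coordinates $\sigma$ of $y$ in $\bar Q_\mu$, that $\lambda$ is a face of $\nu$ (so $x \in \bar Q_\nu$), and that the inequalities defining $V_\mu$ restrict to those defining $V_\nu$. This expresses $N_x$ entirely in terms of canonical cubes.

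With this in hand, fix $\gamma$ and $s_* \in A_\gamma$, put $\nu_* = \gamma(\floor[s_*], \ceil[s_*])$, and write $s_*^0 = s_* - \floor[s_*]$ for the canonical coordinates. I would split the coordinates into two types. For $i$ with $s_{*,i} \notin \NN$, direction $i$ is active in $\nu_*$, and the relevant inequality — either $0 < s_i^0 < 1$ (condition (3), when $d(\lambda)_i = 1$) or $|s_i^0 - r^{\nu_*}_i| < 1/2$ (condition (4), when $d(\lambda)_i = 0$) — is strict, hence survives a small perturbation of $s_i$. For $i$ with $s_{*,i} \in \NN$, one first notes (using that $\lambda$ is a face of $\nu_*$) that necessarily $d(\lambda)_i = 0$, that $d(\nu_*)_i = 0$, and that $r^{\nu_*}_i = s_{*,i}^0 = 0$. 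Perturbing $s_i$ off this integer value moves $[\gamma, s]$ into an adjacent cube $\nu'$ obtained from $\nu_*$ by extending one step in direction $i$, upward if $s_i$ increases and downward if it decreases; in either case $\nu_*$ is a face of $\nu'$, so $\lambda$ is a face of $\nu'$ and $x \in \bar Q_{\nu'}$, while the new canonical coordinate is $\varepsilon \in (0,1/2)$ with $r^{\nu'}_i = 0$ (upward) or $1 - \varepsilon \in (1/2,1)$ with $r^{\nu'}_i = 1$ (downward). In both sub-cases condition (4) reduces to $\varepsilon < 1/2$, which holds. Since $V_\nu$ is a product over coordinates, simultaneous perturbations are handled coordinatewise, and a box of radius $\delta < 1/2$ about $s_*$ (small enough to protect the active-direction inequalities) lies in $A_\gamma$. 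Thus $A_\gamma$ is open, and $N_x$ is an open neighborhood of $x$.

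The main obstacle is the gluing across faces, i.e. the coordinates with $s_{*,i} \in \NN$: one must check that crossing an integer hyperplane sends $[\gamma,s]$ into a genuinely adjacent cube $\nu'$ whose closure still contains $x$, and that the half-width $1/2$ in condition (4) is exactly what makes the contributions of the two cubes flanking each face assemble into a full neighborhood. The non-integer coordinates are routine; everything delicate is concentrated in verifying that $\lambda$ remains a face of the enlarged cube $\nu'$ and that the perturbed coordinate stays within the prescribed half.
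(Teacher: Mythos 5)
Your proposal is correct, but it takes a genuinely different route from the paper's. The paper invokes the CW structure of $X_\Lambda$: by the weak topology it suffices to check, for each cube $\mu$ with $d(\mu)\le\One{k}$, that $V_\mu=\{s\in[0,d(\mu)]:[\mu,s]\in N_x\cap \bar Q_\mu\}$ is open, and it then computes these sets outright by a case split ($x\notin\bar Q_\mu$; $\mu=\lambda$; $\mu\ne\lambda$ with $x\in\bar Q_\mu$), the coordinate projections being $(0,1)$, $[0,1/2)$, $(1/2,1]$, or --- in the exceptional cases where $x$ occupies two opposite faces of $\bar Q_\mu$ --- $[0,1/2)\cup(1/2,1]$. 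You instead work straight from the quotient topology over all of $\Lambda$ (so you must treat arbitrary $\gamma$, not just cubes), insert an intermediate characterization of membership in $N_x$ via the canonical cube of a point, and prove openness pointwise by perturbation, absorbing the paper's exceptional cases into the choice of witnessing position according to the side from which an integer hyperplane is crossed. Your route buys independence from the weak-topology statement in Lemma~\ref{lem:CW structure}; the paper's route is shorter and its explicit sets $V_\mu$ are reused verbatim in the proof of Lemma~\ref{even}, which your pointwise argument does not supply directly. Two small repairs: you never verify $x\in N_x$ (take $\mu=\lambda$, $s=t$; condition (4) is then vacuous), and your appeal to ``$V_\nu$ is a product'' is loose --- in the exceptional cases it need not be a product --- but the step is sound because the single transferred position $r^{\nu'}=r^{\nu_*}+\floor[s_*]-\floor[s]$ witnesses condition (4) in all perturbed coordinates simultaneously.
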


\begin{proof}
Taking $\mu = \lambda$ and $s = t$ in the definition of $N_x$ shows that $x\in N_x$. By definition
of the weak topology, it suffices to show that if $d(\mu) \le \One{k}$ then the intersection
$N_x\cap \bar Q_\mu$ is relatively open, and furthermore, in order to show that $N_x\cap \bar
Q_\mu$ is relatively open, it suffices to show that $V_\mu\subset [0,d(\mu)]$ defined by
\[
V_\mu=\{s\in [0,d(\mu)]:[\mu,s]\in N_x\cap \bar Q_\mu\}
\]
is open. We consider three cases.
\begin{enumerate}
\item If $x\not\in \bar Q_\mu$, then $V_\mu=\emptyset$ is open.
\item If $\mu=\lambda$, then
    \[
    V_\mu=\{s\in [0,d(\mu)]:\text{$0<s_i<1$ if $d(\mu)_i=1$}\}.
    \]
\item If $\mu\ne\lambda$ and $x\in \bar Q_\mu$, then $|\mu|>|\lambda|$, and $x$ is in the
    boundary of the open cell $Q_\mu$. For each $i\in\{1,\dots,k\}$ write
    \[
    V_\mu^i=\{s_i:s\in V_\mu\}.
    \]
    Then $V_\mu=\prod_{i=1}^kV_\mu^i$. So it suffices to show that each $V_\mu^i$ is
    relatively open in $[0, d(\mu)_i]$. Fix $i \le k$. If $d(\mu)_i=0$ then
    $V_\mu^i= \{0\} = [0, d(\mu)_i]$. If $d(\lambda)_i=1$ then $V_\mu^i=(0,1)$. So
    we turn to the remaining case $d(\lambda)_i=0$ and $d(\mu)_i=1$. Then $V_\mu^i=[0,1/2)$ or
    $(1/2,1]$ \emph{except} in the following two circumstances:
    \begin{enumerate}
    \item $|\lambda|=0$ and there exists $n\in\N^k$ such that
    \[
    \lambda=\mu(n)=\mu(n+e_i),
    \]
    or

    \item $|\lambda|=1$ and there exists $n\in\N^k$ such that
    \[
    \lambda=\mu(n,n+e_i)
    \midtext{and}
    \mu(n)=\mu(n+e_i);
    \]
    \end{enumerate}
    in each of (a) and (b) we have $V_\mu^i=[0,1/2)\cup (1/2,1]$. We have shown that in every
    case $V_\mu^i$ is an open subset of $[0,d(\mu)_i]$.\qedhere
\end{enumerate}
\end{proof}

\begin{lem}\label{even}
$N_x$ is evenly covered.
\end{lem}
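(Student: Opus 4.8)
The plan is to produce, for each sheet point $y \in \wilde p\inv(x)$, an explicit open set $N_y$ and to show that $\wilde p\inv(N_x) = \bigsqcup_{y \in \wilde p\inv(x)} N_y$, with each $\wilde p|_{N_y}$ a homeomorphism onto $N_x$. First I would fix $y \in \wilde p\inv(x)$ and invoke Observation~\ref{coordinates}(1) to get the unique $\nu$ with $d(\nu) \le \One k$, $y \in Q_\nu$, and $p(\nu) = \lambda$; since $p$ preserves degree, $d(\nu) = d(\lambda)$ and $y = [\nu, t]$. I then define $N_y$ to be the set of $[\omega, s] \in X_\Omega$ satisfying the four conditions of the definition of $N_x$ with $\nu$ in place of $\lambda$ and $\Omega$-cubes $\omega$ in place of $\mu$. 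Each $N_y$ is open by \lemref{open} applied in $X_\Omega$, so it remains to verify the partition and the homeomorphism statements.

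The engine of the proof is a cube-lifting correspondence. Because $x \in (0, d(\lambda))$ is interior to the cell $Q_\lambda$, any $\mu$ with $d(\mu) \le \One k$ and $x \in \bar Q_\mu$ has $Q_\lambda$ as a face of $\bar Q_\mu$, so $\mu = \mu_-\,\lambda\,\mu_+$ with $d(\mu_-) + d(\mu_+)$ supported on the coordinates $i$ with $d(\lambda)_i = 0$. Given a representative $r$ of $x$ in $\bar Q_\mu$, I would use the two covering bijections $\Omega v \to \Lambda p(v)$ and $v\Omega \to p(v)\Lambda$ (i.e.\ unique path lifting) to lift $\mu_-$ and $\mu_+$ uniquely to paths in $\Omega$ attached at $\nu$, producing a unique $\omega$ with $p(\omega) = \mu$, $\omega(\lfloor r\rfloor, \lceil r\rceil) = \nu$, and $y = [\omega, r] \in \bar Q_\omega$. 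Conversely every cube $\omega$ with $y \in \bar Q_\omega$ projects to such a $\mu$. This sets up, for each sheet $y$, a bijection between the appearances of $x$ in cubes $\bar Q_\mu \ni x$ and the appearances of $y$ in cubes $\bar Q_\omega \ni y$ lying over them.

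With this in hand I would assemble the three remaining facts. For $N_y \subseteq \wilde p\inv(N_x)$: since $p$ preserves degree and carries a representative of $y$ to a representative of $x$, conditions (1)--(4) transfer along $\wilde p([\omega,s]) = [p(\omega), s]$. For disjointness: if $[\omega, s] \in N_y \cap N_{y'}$, then condition (4)'s bound of $1/2$ forces both $y$ and $y'$ to be the representative of $\wilde p^{-1}(x)$ nearest to $s$ coordinatewise, and uniqueness in Observation~\ref{coordinates}(1) gives $y = y'$. For $\wilde p\inv(N_x) \subseteq \bigcup_y N_y$: given $[\omega, s]$ with $[p(\omega), s] \in N_x$, I would choose in each wrapping coordinate the nearer corner, assembling a single representative $r$ of $x$ within $1/2$ of $s$ in every coordinate, and set $y = [\omega, r]$, whence $[\omega, s] \in N_y$. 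Finally, $\wilde p|_{N_y}$ is a continuous bijection onto $N_x$ (surjectivity and injectivity read off the cube correspondence and the coordinate parametrization of \lemref{open}), and it is open cube-by-cube, so by the weak topology it is a homeomorphism.

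The hard part will be the self-identified (``wrap-around'') cases (a) and (b) of \lemref{open}, where a single cube $\bar Q_\mu$ meets $x$ along two opposite faces. Such a $\mu$ lifts either to one self-identified cube $\bar Q_\omega$, when the covering preserves the identification, or to two distinct cubes of $X_\Omega$, when it unfolds it. The delicate bookkeeping is to check that in the unfolded case the two half-neighborhoods lie in the \emph{same} sheet $N_y$ and together map onto all of $N_x \cap \bar Q_\mu$, so that $\wilde p|_{N_y}$ is genuinely bijective, and to confirm that the coordinatewise choice of representatives in the previous step really does assemble into a single consistent representative $r$. The covering bijections together with unique path lifting are exactly what make both of these points go through.
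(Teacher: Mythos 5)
Your proposal is correct and follows the same overall strategy as the paper: the same sheets $N_y$ (the neighborhoods of the points of $\wilde p\inv(x)$ defined by the four conditions applied in $X_\Omega$), the same partition claim $\wilde p\inv(N_x)=\bigsqcup_{y}N_y$, and the same disjointness argument (your ``nearest representative'' observation is exactly the paper's contradiction: two distinct integers $a_i\ne b_i$ cannot both lie within $1/2$ of $s_i\in(0,1)$). The one place you genuinely diverge is the sheet-homeomorphism step. The paper works at the pre-quotient level: it defines $p'(\nu,t)=(p(\nu),t)$ on $\bigsqcup_\nu\{\nu\}\times V^y_\nu$ and justifies bijectivity by asserting $p'\bigl(\{\nu\}\times V^y_\nu\bigr)=\{p(\nu)\}\times V_{p(\nu)}$, which is literally false in precisely the wrap-around case you single out: if a self-identified cube $\mu$ downstairs (e.g.\ a loop edge) unfolds into two distinct cubes $\nu^{(1)},\nu^{(2)}$ upstairs, then each $V^y_{\nu^{(j)}}$ is only ``half'' of $V_\mu$, and only their union maps onto $\{\mu\}\times V_\mu$. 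Your formulation in terms of \emph{appearances} --- pairs (cube, representative of $x$ resp.\ $y$ in that cube), put in bijection by unique path lifting of $\mu_-$ and $\mu_+$ in the factorization $\mu=\mu_-\lambda\mu_+$ --- is exactly the bookkeeping that repairs this, since it matches windows to windows rather than cubes to cubes. So where the two arguments differ, yours is the tighter one; the ``delicate'' cases you flag at the end are real, and your mechanism (covering bijections plus the per-appearance correspondence) does resolve them.
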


\begin{proof}
Since the map $\wilde p:X_\Omega\to X_\Lambda$ has the form $\wilde p([\mu,t])=[p(\mu),t]$, the
inverse image $\wilde p\inv(N_x)$ is the disjoint union over $y\in \wilde p\inv(x)$ of the
corresponding neighborhoods $N_y$.

We must show that:
\begin{enumerate}
\item for each $y\in \wilde p\inv(x)$, the map $\wilde p$ restricts to a
    homeomorphism of $N_y$ onto $N_x$; and
\item for distinct $y,z\in \wilde p\inv(x)$ we have $N_y\cap N_z=\emptyset$.
\end{enumerate}

For (1), let $q_\Lambda:\bigsqcup_{d(\mu) \le \One{k}}\{k\}\times [0,d(\mu)]\to X_\Lambda$ be the
quotient map, and similarly for $q_\Omega$. We have
\[
q_\Lambda\inv(N_x)=\bigsqcup_{d(\mu) \le \One{k}}\{k\}\times V_\mu,
\]
where $V_\mu$ is open in $[0,d(\mu)]$ for each $\mu$, and similarly
\[
q_\Omega\inv(N_y)=\bigsqcup_{d(\nu) \le\One{k}}\{k\}\times V_\nu^y,
\]
where $V_\nu^y$ is open in $[0,d(\nu)]$ for each $\nu$.

Define $p':\bigsqcup_{d(\nu) \le \One{k}}\{k\}\times V_\nu^y\to \bigsqcup_{d(\mu) \le
\One{k}}\{k\}\times V_\mu$ by $p'(\mu,t)=(p(\mu),t)$. Then $p'$ is a homeomorphism, because
\[
p'\bigl(\{\nu\}\times V_\nu^y\bigr)=\{p(\nu)\}\times V_{p(\nu)}.
\]
Also, $(\nu,t)\sim (\omega,r)$ in $\bigsqcup_{d(\nu) \le \One{k}}\{k\}\times V_\nu^y$ if and only
if $p'(\nu,t)\sim p'(\omega,r)$ in $\bigsqcup_{d(\nu) \le \One{k}}\{k\}\times V_{p(\nu)}^y$.

For (2), suppose not, and take $w\in N_y\cap N_z$.
Let $\mu$ and $\nu$ be the unique elements of $p\inv(\lambda)$ such that $y\in Q_\mu$ and $z\in Q_\nu$.
Then $y=[\mu,t]$ and $z=[\nu,t]$.
Let $\alpha$ be the unique cube in $\Omega$ such that $w\in Q_\alpha$,
and let $s$ be the unique element of $(0,d(\alpha))$ such that $w=[\alpha,s]$.
By \obsref{coordinates} (1) we cannot have $p(\alpha)=\lambda$.
Therefore we must have $|\alpha|>|\mu|$.
Since $y\in \bar Q_\alpha$, by \obsref{coordinates} (1) there exists $a\le d(\alpha)$ such that
$y=[\alpha,a]$, and $a_i=t_i$ except for those $i$ for which $d(\mu)_i=0$ and $d(\alpha)_i=1$.
Similarly, there exists $b\le d(\alpha)$ such that $z=[\alpha,b]$, and $b_i=t_i$ except when $d(\nu)_i=0$ and $d(\alpha)_i=1$.
Since $y\ne z$, there exists $i$ such that $a_i\ne b_i$,
and then we must have $d(\mu)_i=d(\nu)_i=0$ and $d(\alpha)_i=1$.
Since $w\in N_y\cap N_z$, we have $|s_i-a_i|<1/2$ and $|s_i-b_i|<1/2$.
But this is a contradiction since $0<s_i<1$ and $a_i$ and $b_i$ are distinct integers.
\end{proof}

\begin{proof}[Proof of \thmref{cover}]
This follows from \lemref{open} and \lemref{even}, because $\Lambda$ is covered by the open sets $N_x$ for $x\in X_\Lambda$.
\end{proof}

\begin{lem}\label{cover is realization}
Let $\Lambda$ be a $k$-graph, and let $q:Y\to X_\Lambda$ be a covering map.
Then there are a $k$-graph $\Omega$, a covering $p:\Omega\to\Lambda$ and a homeomorphism
$\phi: Y \to X_\Omega$ such that $q = \wilde p \circ \phi$.
\end{lem}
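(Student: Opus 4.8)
The plan is to reverse the construction of \thmref{cover}: from the topological covering $q\colon Y\to X_\Lambda$ I will manufacture a $k$-graph $\Omega$ together with a covering $p\colon\Omega\to\Lambda$ whose realization $\wilde p$ agrees with $q$ after a homeomorphism $\phi\colon Y\to X_\Omega$. The starting point is that, by \lemref{lem:CW structure} and its corollary, $X_\Lambda$ is a CW-complex whose open cells are the $Q_\lambda$ with $d(\lambda)\le\One{k}$ and whose closed cells $\bar Q_\lambda$ carry characteristic maps $f_\lambda$; I will transport this cell structure through $q$ and read off from it the combinatorial data defining $\Omega$.

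First I would lift the CW-structure. Each open cell $Q_\lambda$ is homeomorphic to an open cube, hence simply connected, so it is evenly covered by $q$; and each characteristic map $f_\lambda\colon\bar{Q^{|\lambda|}}\to X_\Lambda$ lifts through $q$ (uniquely once a point of the fibre is fixed) because $\bar{Q^{|\lambda|}}$ is simply connected. This endows $Y$ with the standard CW-structure of a covering space: its open cells are the connected components of the sets $q\inv(Q_\lambda)$, the lifted maps are their characteristic maps, $q$ is cellular, and $q$ carries each cell of $Y$ homeomorphically onto a cell of $X_\Lambda$. Thus the cells of $Y$ are indexed by ``lifts'' of the cubes of $\Lambda$, and $q$ respects colours, dimensions, and boundary incidences.

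Next I would use these cells to build $\Omega$. The $0$- and $1$-cells of $Y$, with the colours and orientations inherited from $E$, form a $k$-coloured graph $E_\Omega$, and each $2$-cell of $Y$ lying over a commuting square $\lambda=ef=gh$ has boundary a lift $\tilde e\tilde f,\tilde g\tilde h$ of $\partial Q_\lambda$, yielding a commuting square $\tilde e\tilde f=\tilde g\tilde h$. The covering property makes this collection of squares complete: each composable bicoloured pair in $E_\Omega$ lies over a pair in $\Lambda$ belonging to a unique square, whose $2$-cell lifts to a unique $2$-cell of $Y$. By the characterisation of $k$-graphs in terms of $1$-skeleton and commuting squares \cite{FowlerSims:TAMS02, HazelwoodRaeburnEtAl:xx11} this produces a $k$-graph $\Omega$, and $q$ on cells defines a $k$-graph morphism $p\colon\Omega\to\Lambda$; that $p$ is a covering reduces, by unique factorisation, to the edge-level bijections $v\Omega^{e_i}\to p(v)\Lambda^{e_i}$ and $\Omega^{e_i}v\to\Lambda^{e_i}p(v)$ (cf.\ \cite{pqr:cover}), which are exactly the local triviality of $q$ around the $0$-cells. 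The main obstacle I foresee is verifying that the lifted squares satisfy the associativity (cubical) coherence required of a $k$-graph when $k\ge 3$; I expect the cleanest route is to observe, following Appendix~A and Theorem~B.2 of \cite{KPS3}, that $q$ pulls the cubical set of $\Lambda$ back to a cubical set on $Y$ whose associated $k$-graph is $\Omega$, which renders the coherence automatic and simultaneously identifies $Y$ with $X_\Omega$.

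Finally I would produce $\phi$. By construction the cubes $\wilde\lambda$ of $\Omega$ index both the cells of $Y$ and the cells $Q_{\wilde\lambda}$ of $X_\Omega$; on a cell lying over $Q_\lambda$ I set $\phi$ to be the composite of $q$ with the coordinate homeomorphisms $Q_\lambda\to(0,d(\lambda))\to Q_{\wilde\lambda}$ of \lemref{lem:CW structure}, so that $\phi(y)=[\wilde\lambda,t]$ whenever $q(y)=[\lambda,t]$. These maps agree on closures of cells, since the gluing relation on $X_\Omega$ is precisely the incidence relation carried by $q$ from $X_\Lambda$, so $\phi$ is a well-defined cellwise homeomorphism between CW-complexes and hence a homeomorphism. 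Because $\wilde p([\wilde\lambda,t])=[\lambda,t]$, we get $\wilde p\circ\phi=q$ as required. As a soft alternative avoiding the coherence check, one can instead use that $X_\Lambda$ has a universal cover (Remark~\ref{cwok}), so $q$ corresponds to a subgroup $H\le\pi_1(X_\Lambda,u)\cong\pi_1(\Lambda,u)$ of \corref{isomorphism}, choose by \cite{pqr:cover} a covering $p\colon\Omega\to\Lambda$ realizing $H$, and invoke \thmref{cover} and \lemref{functor group} to see that $\wilde p$ corresponds to the same subgroup, whence the classification of coverings supplies $\phi$ with $q=\wilde p\circ\phi$.
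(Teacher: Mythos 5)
Your closing ``soft alternative'' is, essentially verbatim, the paper's own proof: choose $u\in\Lambda^0$ and $v\in q\inv(u)$, transport $H'=q_*(\pi_1(Y,v))$ across the isomorphism $\pi_1(X_\Lambda,u)\cong\pi_1(\Lambda,u)$ of \corref{isomorphism}, realize the resulting subgroup $H$ by an algebraic covering $p\colon\Omega\to\Lambda$ using \cite[Theorem~2.8]{pqr:cover}, observe that $\wilde p$ is a topological covering (\thmref{cover}) inducing the same subgroup $H'$ by \lemref{functor group}, and invoke the uniqueness half of the classification of coverings (\cite[Corollary~V.6.4]{massey}, applicable since $X_\Lambda$ is a connected CW-complex, cf.\ Remark~\ref{cwok}) to produce $\phi$ with $q=\wilde p\circ\phi$. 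So your proposal does contain a correct proof, and that branch of it coincides with the paper's. Your primary, constructive route is genuinely different: it would build $\Omega$ explicitly, with $n$-cubes the $n$-cells of the lifted CW-structure on $Y$, which buys a concrete picture of $\Omega$ that the paper's soft argument never provides.

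That said, the constructive route as written has two real gaps. First, passing to cubical sets does not render the coherence for $k\ge 3$ ``automatic'': pulling the cell structure of $X_\Lambda$ back along $q$ does give a cubical set on $Y$, but you must still verify that it satisfies the unique-filling conditions characterizing cubical sets arising from $k$-graphs --- a unique square filling each composable bicoloured path, and the associativity condition on triples of colours --- and this requires lifting $2$- and $3$-cells with prescribed boundary lifts via the covering property; that is precisely where the combinatorial work lives, not a formality. Second, the step ``these maps agree on closures of cells, since the gluing relation on $X_\Omega$ is precisely the incidence relation carried by $q$ from $X_\Lambda$'' asserts exactly what needs proof: that the attaching maps of the cells of $Y$ agree, under your indexing, with the realizations of the factorization relations of $\Omega$. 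Note also that $q$ need not be injective on \emph{closed} cells of $Y$ (attaching maps can identify boundary points), so $\phi$ is defined only cell-by-cell and its continuity and injectivity across cell boundaries must be argued, not read off. Both gaps are fillable, but the paper's route --- which you relegated to a fallback --- avoids them entirely, at the cost of being non-constructive.
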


\begin{proof}
Choose $u\in\Lambda^0$ and $v\in q\inv(u)$.
Let $H'$ be the subgroup $q_*(\pi_1(Y,v))$ of $\pi_1(X_\Lambda,u)$,
and let $H$ be the subgroup of $\pi_1(\Lambda,u)$ corresponding to $H'$ under the isomorphism of \corref{isomorphism}.
By \cite[Theorem~2.8]{pqr:cover}, there are a connected $k$-graph $\Omega$, a covering $p:\Omega\to\Lambda$, and $w\in p\inv(u)$ such that $H=p_*(\pi_1(\Omega,w))$.
Then $\wilde p: X_\Omega\to X_\Lambda$ is a covering map, and by \lemref{functor group} we have
a commuting diagram
\[
\xymatrix{
\pi_1(\Omega,w) \ar[r]^-{\cong} \ar[d]_{p_*}
&\pi_1(X_\Omega,w) \ar[d]^{\wilde p_*}
\\
\pi_1(\Lambda,u) \ar[r]_-{\cong}
&\pi_1(X_\Lambda,u).
}
\]
Thus $\wilde p_*(\pi_1(X_\Omega,w))=H'$, so by \cite[Corollary~V.6.4]{massey}, the coverings $(Y,
q)$ and $(X_\Omega, \wilde p)$ are isomorphic, that is, there is a homeomorphism $\phi: Y \to
X_\Omega$ such that $q = \wilde p \circ \phi$.
\end{proof}

For a fixed $k$-graph $\Lambda$, we have a category $\algcov\Lambda$ of coverings of
$\Lambda$, and we also have a category $\topcov\Lambda$ of coverings of the topological
realization. Each morphism
\[
\xymatrix{
\Omega \ar[rr]^-\varphi \ar[dr]_p
&&\Gamma \ar[dl]^q
\\
&\Lambda
}
\]
in $\algcov\Lambda$ determines a morphism (also called a deck transformation)
\[
\xymatrix{
X_\Omega \ar[rr]^-{\wilde\varphi} \ar[dr]_{\wilde p}
&&X_\Gamma \ar[dl]^{\wilde q}
\\
&X_\Lambda
}
\]
in $\topcov\Lambda$.

\begin{thm}\label{equivalence}
With the above notation, the assignments $(\Omega,p)\mapsto (X_\Omega,\wilde p)$ and
$\varphi\mapsto\wilde\varphi$ give an equivalence
$\Phi:\algcov\Lambda\xrightarrow{\sim}\topcov\Lambda$.
In particular, if $(\Omega,p)$ is universal cover of $\Lambda$, then $(X_\Omega,\wilde p)$
is a universal cover of $X_\Lambda$.
\end{thm}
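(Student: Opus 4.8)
The plan is to establish that the functor $\Phi$ is fully faithful and essentially surjective, whence it is an equivalence by the standard criterion. That $\Phi$ is a functor at all is already in hand: it sends objects to objects by \thmref{cover}, and it respects composition and identities by \propref{prp:functorial}. Essential surjectivity is then immediate from \lemref{cover is realization}: given any covering $q : Y \to X_\Lambda$, that lemma furnishes an algebraic covering $p : \Omega \to \Lambda$ together with a homeomorphism $\phi : Y \to X_\Omega$ satisfying $q = \wilde p \circ \phi$, and such a $\phi$ is exactly an isomorphism $(Y,q) \cong (X_\Omega, \wilde p) = \Phi(\Omega,p)$ in $\topcov\Lambda$. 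Faithfulness is precisely \lemref{faithful}, since a morphism in $\algcov\Lambda$ is a $k$-graph morphism $\varphi$ and $\Phi$ carries it to $\wilde\varphi$.

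The substance of the argument is fullness. Let $g : (X_\Omega, \wilde p) \to (X_\Gamma, \wilde q)$ be a morphism in $\topcov\Lambda$, so that $g$ is continuous and $\wilde q \circ g = \wilde p$; I must produce a $k$-graph morphism $\varphi : \Omega \to \Gamma$ with $q \circ \varphi = p$ and $\wilde\varphi = g$. First I would pin down basepoints. Since $\wilde q$ has the form $[\gamma,t] \mapsto [q(\gamma),t]$, the fibre of $\wilde q$ over a vertex of $X_\Lambda$ consists of vertices (that is, $0$-cells) of $X_\Gamma$; as $g$ is a deck transformation it carries fibres of $\wilde p$ to fibres of $\wilde q$, so fixing a basepoint vertex $w \in \Omega^0$ over $u = p(w) \in \Lambda^0$, the image $g(w)$ is a vertex $[w',0]$ for a unique $w' \in \Gamma^0$ with $q(w') = u$.

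Next I would apply the lifting criterion to the covering $\wilde q$: the existence of the basepoint-preserving lift $g$ of $\wilde p$ forces $\wilde p_*\pi_1(X_\Omega, w) \subseteq \wilde q_*\pi_1(X_\Gamma, w')$ inside $\pi_1(X_\Lambda, u)$. Using the naturality of the isomorphisms of \corref{isomorphism} recorded in \lemref{functor group}, this containment transports to $p_*\pi_1(\Omega, w) \subseteq q_*\pi_1(\Gamma, w')$ inside $\pi_1(\Lambda, u)$. By the classification of algebraic coverings in \cite{pqr:cover}, such a subgroup containment produces a covering morphism $\varphi : \Omega \to \Gamma$ with $q \circ \varphi = p$ and $\varphi(w) = w'$. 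Then both $\wilde\varphi$ and $g$ are lifts of $\wilde p : X_\Omega \to X_\Lambda$ through $\wilde q$ (for $\wilde\varphi$, because $\wilde q \circ \wilde\varphi = (q\circ\varphi)^{\sim} = \wilde p$ by functoriality) that agree at the basepoint $w$ of the connected space $X_\Omega$; by the uniqueness of lifts they coincide, giving $\wilde\varphi = g$ and hence fullness.

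Finally, the assertion about universal covers follows at once: if $(\Omega, p)$ is a universal cover of $\Lambda$, then $\Omega$ is simply connected, so \corref{isomorphism} gives $\pi_1(X_\Omega, w) \cong \pi_1(\Omega, w) = 1$, whence the connected CW-complex $X_\Omega$ is simply connected and $(X_\Omega, \wilde p)$ is a universal cover of $X_\Lambda$. I expect the main obstacle to be fullness, and within it two points in particular: choosing the basepoints $w$ and $w'$ so that the lifting criterion applies cleanly, and invoking the precise form of the algebraic covering classification from \cite{pqr:cover} needed to convert the subgroup containment into an honest $k$-graph covering morphism; the concluding identity $\wilde\varphi = g$ is then only a routine appeal to unique lifting.
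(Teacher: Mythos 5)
Your proposal is correct and follows essentially the same route as the paper's proof: functoriality plus Lemma~\ref{faithful} for faithfulness, Lemma~\ref{cover is realization} for essential surjectivity, and for fullness the identical chain of moves (vertices map to vertices, the subgroup containment $\wilde p_*\pi_1 \subseteq \wilde q_*\pi_1$ transported to the algebraic side via Lemma~\ref{functor group}, the covering classification of \cite{pqr:cover} to produce $\varphi$, and uniqueness of lifts to conclude $\wilde\varphi = g$), with the universal-cover statement handled the same way via Corollary~\ref{isomorphism}.
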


\begin{proof}
$\Phi$ is functorial because $\Lambda\mapsto X_\Lambda$ is.
We must show that $\Phi$ is
\begin{enumerate}
\item faithful,
\item full, and
\item essentially surjective.
\end{enumerate}

(1) follows from \corref{faithful}.

For (2), let $p:\Omega\to\Lambda$ and $q:\Gamma\to\Lambda$ be coverings, and suppose that
$\psi:(X_\Omega,\wilde p)\to (X_\Gamma,\wilde q)$ is a morphism. Choose $v\in\Omega^0$, and let
$u=p(v)$ and $w=\psi(v)$. We have $\wilde q(w) = \wilde q \circ \psi(v) = \wilde p(v) \in
\Lambda^0$. Since $q$ preserves degree, $\wilde q$ maps open $n$-cubes to open $n$-cubes, and in
particular $\wilde{q}^{-1}(\Lambda^0) = \Gamma^0$. So $w \in \Gamma^0$. We have
\[
\wilde q_*\circ \psi_*=\wilde p_*:\pi_1(X_\Omega,v)\to \pi_1(X_\Lambda,u)),
\]
so
\[
\wilde p_*(\pi_1(X_\Omega,v))\subset \wilde q_*(\pi_1(X_\Gamma,w)),
\]
and hence
\[
p_*(\pi_1(\Omega,v))\subset q_*(\pi_1(\Gamma,w)).
\]
Thus by \cite[Theorem~2.2]{pqr:cover} there is a unique morphism $\varphi:(\Omega,p)\to (\Gamma,q)$
taking $v$ to $w$. Then both $\wilde \varphi$ and $\psi$ are morphisms from $(X_\Omega,\wilde p)$
to $(X_\Gamma,\wilde q)$ taking $v$ to $w$, and hence must coincide, by
\cite[Lemma~6.3]{massey}.\footnote{The two quoted references don't explicitly address uniqueness of
morphisms, but this follows by uniqueness of liftings.}

For (3), we must show that every object $(Y,q)$ in $\topcov\Lambda$ is isomorphic to one in the
image of $\Phi$. But this is exactly what \lemref{cover is realization} says.

The final assertion follows from the universal properties of universal covers. By \cite[Theorem
2.7]{pqr:cover} $\Lambda$ has a universal cover $(\Omega, p)$. Let $u \in \Omega^0$. Then by
\cite[Theorem 2.7]{pqr:cover} $p_*\pi_1(\Omega, u)$ is trivial. Hence, \lemref{functor group}
implies that $\wilde p_*\pi_1(X_\Omega, u)$ is also trivial.  It follows that $X_\Omega$ is simply
connected and therefore $X_\Omega$ is a universal cover of $X_\Lambda$.
\end{proof}

\begin{rmk}
Let $(\Omega,p)$ be a covering and fix $v \in \Omega^0$.
Generalizing from the context of directed graphs (see \cite{DPR}), we call
$(\Omega,p)$ \emph{regular} if
$p_*(\pi_1(\Omega,v))$ is a normal subgroup of $\pi_1(\Lambda,p(v))$.  See \cite[Corollary 2.4]{pqr:cover}
for a number of equivalent conditions.
The corresponding property of topological coverings is well-known (see \cite[p.~134]{massey}, for example).
Using \lemref{functor group}, it is easy to verify that the covering $(\Omega,p)$ is regular if and only if the topological realization $(X_\Omega,\wilde p)$ is.
\end{rmk}

\begin{example}[{\cite[Example~5.8]{KPS3}}]\label{ex:big sphere}
Arguing as in Examples \ref{ex:sphere}--\ref{ex:Klein}, we see that the topological realization of the $2$-graph with the  skeleton
\[
\begin{tikzpicture}[scale=2]
    \node[inner sep= 1pt] (w1) at (xyz spherical cs:longitude=0,latitude=0,radius=1) {\small$(w,1)$};
    \node[inner sep= 1pt] (y0) at (xyz spherical cs:longitude=45,latitude=0,radius=1) {\small$(y,0)$};
    \node[inner sep= 1pt] (v1) at (xyz spherical cs:longitude=90,latitude=0,radius=1) {\small$(v,1)$};
    \node[inner sep= 1pt] (x0) at (xyz spherical cs:longitude=135,latitude=0,radius=1) {\small$(x,0)$};
    \node[inner sep= 1pt] (w0) at (xyz spherical cs:longitude=180,latitude=0,radius=1) {\small$(w,0)$};
    \node[inner sep= 1pt] (y1) at (xyz spherical cs:longitude=225,latitude=0,radius=1) {\small$(y,1)$};
    \node[inner sep= 1pt] (v0) at (xyz spherical cs:longitude=270,latitude=0,radius=1) {\small$(v,0)$};
    \node[inner sep= 1pt] (x1) at (xyz spherical cs:longitude=315,latitude=0,radius=1) {\small$(x,1)$};
    \node[inner sep= 1pt] (u1) at (xyz spherical cs:longitude=0,latitude=90,radius=1) {\small$(u,1)$};
    \node[inner sep= 1pt] (u0) at (xyz spherical cs:longitude=0,latitude=-90,radius=1) {\small$(u,0)$};
    \draw[-latex, blue, out=40, in=185] (x1.75) to (w1.west);
    \draw[-latex, red, dashed, out=235, in=90] (x1.220) to (v0.north);
    \draw[-latex, red, dashed, out=125, in=270] (y1.140) to (v0.south);
    \draw[-latex, blue, out=320, in=175] (y1.285) to (w0.west);
    \draw[-latex, blue, out=220, in=5] (x0.255) to (w0.east);
    \draw[-latex, red, dashed, out=55, in=270] (x0.40) to (v1.south);
    \draw[-latex, blue, out=140, in=355] (y0.105) to (w1.east);
    \draw[-latex, blue] (v0.60) .. controls +(0,0,-0.4) and +(-0.4,0,0) .. (u0.west);
    \draw[-latex, blue] (v0.225) .. controls +(0,0,0.3) and +(-0.3,0,0) .. (u1.west);
    \draw[-latex, blue] (v1.240) .. controls +(0,0,0.3) and +(0.4,0,0) .. (u1.east);
    \draw[-latex, blue] (v1.45) .. controls +(0,0,-0.2) and +(0.3,0,0) .. (u0.east);
    \draw[white, line width=2pt] (w1.255) .. controls +(0,0,0.4) and +(0,0.4,0) .. (u1.north);
    \draw[-latex, red, dashed] (w1.255) .. controls +(0,0,0.4) and +(0,0.4,0) .. (u1.north);
    \draw[-latex, red, dashed] (w0.240) .. controls +(0,0,0.4) and +(0,-0.4,0) .. (u1.south);
    \draw[-latex, red, dashed] (w0.45) .. controls +(0,0,-0.4) and +(0,-0.4,0) .. (u0.south);
    \draw[-latex, red, dashed] (w1.20) .. controls +(0,0,-0.2) and +(0,0.4,0) .. (u0.north);
    \draw[white, line width=3pt, out=305, in=90] (y0.320) to (v1.north);
    \draw[-latex, red, dashed, out=305, in=90] (y0.320) to (v1.north);
\end{tikzpicture}
\]
is a sphere.
Moreover the action $\alpha$ of $\ZZ/2\ZZ$ on $\Lambda$ that interchanges opposite vertices
induces the antipodal map on the topological realization, so the quotient is the projective plane.
Indeed, as discussed in \cite{KPS3}, this 2-graph is a skew product of the $2$-graph of
Example~\ref{eg:projective} by $\ZZ/2\ZZ$, the action $\alpha$ is then translation in $\ZZ/2\ZZ$ in
the skew product, and hence the quotient $2$-graph is exactly the $2$-graph of
Example~\ref{eg:projective}.
\end{example}

\begin{rmk}
Interestingly, although the 2-graphs in Examples \ref{ex:sphere}~and~\ref{ex:big sphere} have
topological realizations homeomorphic to the surface of a 3-cube, it is not hard to check that
there is no 2-graph whose cell complex consists of the faces of the cube.
\end{rmk}

\section{Towers of coverings and projective limits}\label{sec:towers}

As in \cite{KPS1}, fix a sequence $(\Lambda_n)^\infty_{n=0}$ of row-finite $k$-graphs with no
sources and a sequence $(p_n)^\infty_{n=1}$ of finite-to-one coverings $p_n : \Lambda_n \to
\Lambda_{n-1}$. There is a unique $(k+1)$-graph $\Sigma = \Sigma(\Lambda_n, p_n)$\footnote{In
\cite{KPS1} this $(k+1)$-graph was denoted $\tgrphlim(\Lambda_n, p_n)$. But in this paper we shall
be discussing the construction of \cite{KPS1} in close proximity with projective limits of
topological spaces, so the notation of \cite{KPS1} would be very confusing here. The notation
$\Sigma(\Lambda_n, p_n)$ is reminiscent of the notation for the linking graph associated to the
$\Omega_1$-system of $k$-morphs obtained from the system $(\Lambda_n, p_n)$ (see
\cite[Examples~5.3(iv)]{KPS2}).} such that $\Sigma^0 = \bigsqcup^\infty_{n=0} \Lambda^0_n$,
$\Sigma^{e_i} = \bigsqcup_{n=0}^\infty \Lambda^{e_i}_n$ for $i \le k$, and $\Sigma^{e_{k+1}} =
\bigsqcup^\infty_{n=1} \{f_v : v \in \Lambda^0_n\}$, and with structure maps on
$\bigsqcup^\infty_{n=0} \Lambda_n \subseteq \Sigma$ inherited from the $\Lambda_n$, range and
source on $\Sigma^{e_{k+1}}$ given by $s(f_v) = v$ and $r(f_v) = p_n(v)$ for $v \in \Lambda^0_n$,
and factorization rules for edges of degree $e_{k+1}$ determined by $f_{r(\lambda)}\lambda =
p(\lambda)f_{s(\lambda)}$ (the unique path-lifting property ensures that this specifies a valid
factorization property). See \cite[Proposition~2.7 and Corollary~2.15]{KPS1} for details.

For $0 \le m \le n$, we write $p^n_m$ for the map $p_{m+1} \circ \cdots \circ p_n : \Lambda_n \to
\Lambda_m$. For each $n \ge 0$ and each $v \in \Lambda^0_n \subseteq \Sigma^0$, the path $F_v :=
f_{p^n_0(v)} f_{p^n_1(v)} \dots f_{p_n(v)}$ is the unique path $F_v \in \Lambda_0^0 \Sigma v$ such
that $d(F_v) \in \NN e_{k+1}$.

It is also shown in \cite{PQS} that given a system $(\Lambda_n, p_n)$ as above, the projective
limit
\[\textstyle
    \varprojlim (\Lambda_n, p_n) = \{(\lambda_n)^\infty_{n=0} \in \prod^\infty_{n=0} \Lambda_n : p_n(\lambda_n) = \lambda_{n-1}\text{ for all } n \ge 1\}
\]
of the discrete spaces $\Lambda_n$ forms a topological $k$-graph in the sense of Yeend
\cite{YeendTopGraph} with structure maps defined coordinatewise and degree map given by
$d\big((\lambda_n)^\infty_{n=1}\big) = d(\lambda_0)$. The thrust in \cite{PQS} is that Yeend's
topological-graph $C^*$-algebra of $\varprojlim (\Lambda_n, p_n)$ is isomorphic to a full corner in
the $k$-graph algebra $C^*(\Sigma)$. Here we are interested in topological aspects of the two
constructions.

We shall show that the fundamental group of $\Sigma$ is identical to that of $\Lambda_0$. We will
also propose a natural notion of the topological realization $X_\Lambda$ of a topological $k$-graph
$\Lambda$ and then show that $X_{\varprojlim (\Lambda_n, p_n)}$ is homeomorphic to the projective
limit $\varprojlim (X_{\Lambda_n}, \wilde{p}_n)$ of the topological realizations of the $\Lambda_n$
under the induced coverings arising from functoriality of the topological-realization construction.
We regard this as evidence that our proposed notion of the topological realization of a topological
$k$-graph is a reasonable one in the sense that it ensures that topological realization is
continuous with respect to projective limits. We deduce that the fundamental group of
$X_{\varprojlim (\Lambda_n, p_n)}$ is isomorphic to the projective limit of the fundamental groups
of the $X_{\Lambda_n}$.

\begin{lem}
Let $(\Lambda_n, p_n)$ be a system of coverings of $k$-graphs and let $\Sigma =
\Sigma(\Lambda_n, p_n)$ as above. Suppose that $w \in \GG(\Sigma)$ satisfies $r(w) \in \Lambda_0$.
Then $w = w' F_{s(w)}$ for some $w' \in \GG(\Lambda_0)$. Moreover for any $v \in \Lambda^0_0$ we
have $v \GG(\Sigma) v = v\GG(\Lambda_0)v$.
\end{lem}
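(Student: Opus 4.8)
The plan is to prove the first assertion and obtain the ``moreover'' clause as an immediate special case. The conceptual content is that every element of $\GG(\Sigma)$ can be rewritten so that all of its edges of degree $e_{k+1}$ (the ``vertical'' edges $f_v$) are pushed to the right, where they accumulate into the unique vertical path $F_{s(w)}$, while the remaining horizontal data collapses down into $\Lambda_0$ via the coverings $p^n_0$. I would first record the presentation of the fundamental groupoid used before \corref{pass}, applied now to the $(k+1)$-graph $\Sigma$: we have $\GG(\Sigma) \cong \GG(E)/R$, where $E$ is the $1$-skeleton of $\Sigma$ and $R$ is generated by the commuting squares of $\Sigma$. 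These squares are of two kinds, the purely horizontal squares of each $\Lambda_n$, and the \emph{mixed} squares $f_{r(\lambda)}\lambda = p_n(\lambda)f_{s(\lambda)}$ for each horizontal edge $\lambda \in \Lambda_n$. Consequently any $w \in \GG(\Sigma)$ may be written as a composable product $w = h_1 h_2 \cdots h_m$ in which each $h_i$ is an edge of $\Sigma$ or the inverse of one.

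The key step is the conjugation identity
\[
F_{r(g)}\, g\, F_{s(g)}\inv = \bar g
\]
for a single edge $g$ of $\Sigma$, where $\bar g = p^n_0(g) \in \GG(\Lambda_0)$ if $g$ is a horizontal edge lying in $\Lambda_n$, and $\bar g = \id$ if $g = f_v$ is vertical. For horizontal $g = \lambda \in \Lambda_n$ I would prove this by induction on $n$, using the recursion $F_v = F_{p_n(v)} f_v$ (with $F_v = \id$ for $v \in \Lambda^0_0$): the base case $n = 0$ is trivial, and for the inductive step the mixed square gives $f_{r(\lambda)} \lambda f_{s(\lambda)}\inv = p_n(\lambda) \in \Lambda_{n-1}$, so that
\[
F_{r(\lambda)} \lambda F_{s(\lambda)}\inv
= F_{p_n(r(\lambda))}\big(f_{r(\lambda)}\lambda f_{s(\lambda)}\inv\big)F_{p_n(s(\lambda))}\inv
= F_{r(p_n(\lambda))}\, p_n(\lambda)\, F_{s(p_n(\lambda))}\inv,
\]
which equals $p^{n-1}_0(p_n(\lambda)) = p^n_0(\lambda)$ by the inductive hypothesis (here one uses $r(p_n(\lambda)) = p_n(r(\lambda))$). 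For vertical $g = f_v$ the recursion gives $F_{p_n(v)} f_v F_v\inv = \id$ directly. Taking inverses shows that the same identity $h = F_{r(h)}\inv\, \bar h\, F_{s(h)}$, with $\bar h \in \GG(\Lambda_0)$, holds when $h$ is the inverse of an edge.

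Finally I would substitute these identities into $w = h_1 \cdots h_m$. Composability gives $s(h_i) = r(h_{i+1})$, so the interior factors telescope via $F_{s(h_i)} F_{r(h_{i+1})}\inv = \id$, leaving
\[
w = F_{r(w)}\inv\,(\bar h_1 \bar h_2 \cdots \bar h_m)\,F_{s(w)}.
\]
Since $r(w) \in \Lambda_0$ we have $F_{r(w)} = \id$, and since $p^n_0$ is a composition of coverings, hence a $k$-graph morphism carrying commuting squares to commuting squares, the element $w' := \bar h_1 \cdots \bar h_m$ lies in $\GG(\Lambda_0)$; thus $w = w' F_{s(w)}$, proving the first assertion. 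For the ``moreover'' clause, take $w \in v\GG(\Sigma)v$ with $v \in \Lambda^0_0$; then $s(w) = v$ forces $F_{s(w)} = \id$, so $w = w' \in v\GG(\Lambda_0)v$, giving $v\GG(\Sigma)v \subseteq v\GG(\Lambda_0)v$, while the reverse inclusion is immediate from functoriality of $\Lambda_0 \hookrightarrow \Sigma$.

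The main obstacle is the conjugation identity: one must set up the iteration of the mixed squares carefully and check that the vertical edges produced at each level assemble exactly into $F_{r(\lambda)}$ and $F_{s(\lambda)}$, which is precisely what the recursion $F_v = F_{p_n(v)} f_v$ and the relation $r(p_n(\lambda)) = p_n(r(\lambda))$ guarantee. A secondary point worth verifying is that $w'$ genuinely lands in $\GG(\Lambda_0)$, which rests on $p^n_0$ preserving commuting squares. If one prefers to read the ``moreover'' clause as an isomorphism $v\GG(\Sigma)v \cong v\GG(\Lambda_0)v$ rather than a set equality, injectivity follows from the degree-zero retraction $\rho \colon \GG(\Sigma) \to \GG(\Lambda_0)$ sending each $f_v$ to $\id$ and each horizontal $g \in \Lambda_n$ to $p^n_0(g)$, which the same computation shows to be a well-defined left inverse to the inclusion.
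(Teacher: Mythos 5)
Your proof is correct, but it takes a genuinely different route from the paper's, so it is worth comparing the two. Both arguments run on the same fuel, namely the mixed factorization squares $f_{r(\lambda)}\lambda = p_n(\lambda)f_{s(\lambda)}$, which allow vertical edges to be conjugated past horizontal ones; the difference lies in the decomposition of $w$ and the shape of the induction. The paper writes $w$ as an alternating word $\lambda_0\lambda_1\inv\lambda_2\cdots\lambda_n^{(-1)^n}$ in \emph{arbitrary morphisms} of $\Sigma$ and inducts on the number of syllables, using the factorization property of $\Sigma$ to split each syllable into horizontal and vertical parts and invoking the uniqueness of the vertical paths $F_v$; the inverse-syllable case $(-1)^n=-1$ requires a separate, more delicate computation. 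You instead invoke the presentation $\GG(\Sigma)\cong\GG(E)/R$ (which the paper itself quotes from \cite{pqr:groupoid} in Section~4) to reduce to single edges, prove the conjugation identity $F_{r(g)}\,g\,F_{s(g)}\inv=\bar g$ edge by edge --- by induction on the \emph{level} $n$ rather than on word length --- and then telescope; inverses of edges come for free by inverting the identity, so no case analysis is needed. Your route buys uniformity, and as a genuine bonus it produces the degree-zero retraction $\rho:\GG(\Sigma)\to\GG(\Lambda_0)$, whose well-definedness (checked on both kinds of commuting squares) shows that the canonical map $\GG(\Lambda_0)\to\GG(\Sigma)$ is injective; both the statement of the lemma ($w'\in\GG(\Lambda_0)$, and the set equality $v\GG(\Sigma)v=v\GG(\Lambda_0)v$) and the paper's proof treat this identification as implicit. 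The paper's route, for its part, works directly with morphisms of $\Sigma$ and so does not lean on the presentation theorem at this point. One small quibble: your justification that $w'=\bar h_1\cdots\bar h_m$ lies in $\GG(\Lambda_0)$ because ``$p^n_0$ carries commuting squares to commuting squares'' is misplaced --- that fact is what makes the retraction $\rho$ well defined, whereas membership of $w'$ needs only that each $\bar h_i$ is an edge of $\Lambda_0$, an inverse of one, or an identity, and that the product is composable, which follows since $s(\bar h_i)$ and $r(\bar h_{i+1})$ are both the image in $\Lambda_0^0$ of the common vertex $s(h_i)=r(h_{i+1})$.
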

\begin{proof}
Fix $w \in \GG(\Sigma)$ with $r(w) \in \Lambda^0$. Write $w = \lambda_0 \lambda^{-1}_1 \lambda_2
\dots \lambda^{(-1)^n}_n$ with each $\lambda_i \in \Sigma$ (we can always do this, by setting
$\lambda_0 = r(w)$ if necessary). We argue by induction on $n$.

For the base case $n = 0$, consider $\lambda_0 \in \Sigma$ with $r(\lambda) \in \Lambda_0$. Let $p
= d(\lambda)_{k+1}$ and let $m = d(\lambda) - p e_{k+1}$. By the factorization property, $\lambda =
\mu\nu$ for some $\mu \in \Sigma^m$ and $\nu \in \Sigma^{p e_{k+1}}$. Since $d(\mu)_{k+1} = 0$, we
have $\mu \in \bigsqcup_{n=0}^\infty \Lambda_n$, and since $r(\mu) \in \Lambda^0_0$, we then have
$\mu \in \Lambda^0_0$. In particular, $s(\mu) \in \Lambda_0^0$, and hence $r(\nu) \in \Lambda_0^0$.
Moreover $s(\nu) = s(w)$, so $\nu \in \Lambda_0^0 \Sigma s(w)$ with $d(\nu) \in \NN e_{k+1}$. Since
$F_{s(w)}$ is the unique such path, setting $w' = \mu \in \GG(\Lambda_0)$, we have $w = w'
F_{s(w)}$ as required.

Now fix $n \ge 1$ and suppose that $w$ can be written in the desired form whenever $w = \lambda_0
\lambda^{-1}_1 \lambda_2 \dots \lambda^{(-1)^{n-1}}_{n-1}$ for some $\lambda_i \in \Sigma$. Fix an
element $\lambda_0 \lambda^{-1}_1 \lambda_2 \dots \lambda^{(-1)^n}_n$. Let $v =
s(\lambda^{(-1)^{n-1}}_{n-1})$. Applying the inductive hypothesis to $\lambda_0 \lambda^{-1}_1
\lambda_2 \dots \lambda^{(-1)^{n-1}}_{n-1}$ we obtain $w = z F_{v} \lambda^{(-1)^n}_n$ for some $z
\in \GG(\Lambda_0)$. We now consider two cases: $(-1)^n = 1$ or $(-1)^n = -1$.

First suppose that $(-1)^n = 1$. Then $\lambda^{(-1)^n}_n = \lambda_n$ with $r(\lambda_n) = v$, and
we have $w = z F_v \lambda_n$. By the factorization property, we can express $F_v \lambda_n =
\mu\eta$ where $d(\eta) = d(F_v) + d(\lambda_n)_{k+1} e_{k+1}$. We then have $d(\mu)_{k+1} = 0$,
and since $r(\mu) = s(w') \in \Lambda^0_0$ we then have $s(\mu) \in \Lambda^0_0$, and it follows as
in the base case that $\eta = F_{s(w)}$. Hence $w = (z\mu) F_{s(w)}$ has the desired form.

Now suppose that $(-1)^n = -1$, so $\lambda^{(-1)^n}_n = \lambda^{-1}_n$, with $s(\lambda_n) = v$.
Factorize $\lambda_n = \nu\mu$ where $d(\nu) = d(\lambda_n)_{k+1} e_{k+1}$; so $w = z F_v \mu^{-1}
\nu^{-1}$. Let $q$ be the integer such that $v \in \Lambda^0_q$. By definition of the factorization
rules in $\Sigma$, we have $F_{r(\mu)} \mu = p^q_0(\mu) F_{s(\mu)} = p^q_0(\mu) F_v$. Let $\mu_0 =
p^q_0(\mu)$, and let $\gamma = F_{r(\mu)}\mu = \mu_0 F_v$. Then
\[
    F_v \mu^{-1} = F_v \gamma^{-1} \gamma \mu^{-1} = \mu_0^{-1} F_{r(\mu)}.
\]
Hence $w = z \mu_0^{-1} F_{r(\mu)} \nu^{-1}$. Then $w' := z \mu_0^{-1}$ belongs to
$\GG(\Lambda_0)$. Since $d(\nu) = |d(\nu)|e_{k+1}$ and $s(\nu) = s(F_{r(\mu)}) = r(\mu)$, if we
write $m$ for the integer such that $r(\mu) \in \Lambda^0_m$, then $\nu = f_{p^m_q(r(\mu))}\dots
f_{p_m(r(\mu))}f_{r(\mu)}$. In particular,
\[
    F_{r(\mu)} = F_{p^m_q(r(\mu))} \nu = F_{r(\nu))} \nu = F_{s(w)} \nu.
\]
Thus $w = z \mu_0^{-1} F_{s(w)} \nu \nu^{-1} = (z\mu_0) F_{s(w)}$ has the required form. The first
assertion of the lemma now follows by induction. For the second statement, observe that if $v \in
\Lambda_0^0$, then $F_v = v$.
\end{proof}

Recall that a topological $k$-graph is a small category equipped with a second-countable locally
compact Hausdorff topology and a continuous degree map $d : \Lambda \to \NN^k$ satisfying the
factorization property such that $r : \Lambda \to \Lambda^0$ is continuous, $s : \Lambda \to
\Lambda^0$ is a local homeomorphism, and composition is continuous on the space of composable pairs
in $\Lambda$ regarded as a subspace of $\Lambda \times \Lambda$.

\begin{defn}
Let $\Lambda$ be a topological $k$-graph. Let $Y_\Lambda = \{(\lambda, n) \in \Lambda \times \RR^k
: 0 \le n \le d(\lambda)\}$, and endow $Y_\Lambda$ with the relative topology induced by the
product topology on $\Lambda \times \RR^k$. The formula~\eqref{eq:equiv rel} determines an
equivalence relation on $Y_\Lambda$ just as in Section~\ref{sec:topological realization}, and we
define $X_\Lambda = Y_\Lambda/\sim$ endowed with the quotient topology. We call $X_\Lambda$ the
\emph{topological realization of $\Lambda$}.
\end{defn}

Now recall from \cite[Section~6]{PQS} that if $(\Lambda_n, p_n)$ is a system of coverings, then the
topological projective limit
\[\textstyle
    \varprojlim (\Lambda_n, p_n) = \{(\lambda_n)^\infty_{n=1} \in \prod^\infty_{n=1} \Lambda_n : p_n(\lambda_n) = \lambda_{n-1}\text{ for all }n\}
\]
is a topological $k$-graph when endowed with pointwise operations.

\begin{prop}
Let $(\Lambda_n, p_n)$ be  a system of coverings of $k$-graphs. Then there is a homeomorphism
\[
\wilde\pi_\infty : X_{\varprojlim (\Lambda_n, p_n)} \to \varprojlim (X_{\Lambda_n}, (p_n)_*)
\]
such that $\wilde\pi_\infty([(\lambda_i)^\infty_{i=0}, t]) = \big([\lambda_i,
t]\big)^\infty_{i=0}$.
\end{prop}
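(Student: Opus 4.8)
The plan is to check in turn that $\wilde\pi_\infty$ is well defined, continuous, bijective, and finally a homeomorphism. Write $\Lambda_\infty = \varprojlim(\Lambda_n, p_n)$ and let $q : Y_{\Lambda_\infty} \to X_{\Lambda_\infty}$ be the quotient map. For well-definedness the key observation is that, since the structure maps on $\Lambda_\infty$ are computed coordinatewise and $d\big((\lambda_i)_i\big) = d(\lambda_0)$, a relation $\big((\lambda_i)_i, s\big) \sim \big((\mu_i)_i, t\big)$ holds in $Y_{\Lambda_\infty}$ if and only if $(\lambda_i, s) \sim (\mu_i, t)$ in $Y_{\Lambda_i}$ for every $i$; indeed the fractional-part condition $s - \floor[s] = t - \floor[t]$ is level-independent, and the middle-part condition holds coordinatewise. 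This simultaneously shows that the formula for $\wilde\pi_\infty$ is independent of the representative and, since $\wilde p_n([\lambda_n, t]) = [p_n(\lambda_n), t] = [\lambda_{n-1}, t]$, that $\big([\lambda_i, t]\big)_i$ genuinely lies in $\varprojlim(X_{\Lambda_n}, \wilde p_n)$. Continuity is then automatic from the universal property of the projective limit: writing $\operatorname{pr}_n$ for the $n$-th coordinate projection of the target, the composite $\operatorname{pr}_n \circ \wilde\pi_\infty$ is the map $[(\lambda_i)_i, t] \mapsto [\lambda_n, t]$, and precomposing with $q$ gives the continuous map $\big((\lambda_i)_i, t\big) \mapsto (\lambda_n, t)$ followed by the quotient map $q_n : Y_{\Lambda_n} \to X_{\Lambda_n}$; so each $\operatorname{pr}_n \circ \wilde\pi_\infty$, and hence $\wilde\pi_\infty$ itself, is continuous.

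For injectivity, an equality $\big([\lambda_i, s]\big)_i = \big([\mu_i, t]\big)_i$ forces, already in coordinate $0$, that $s - \floor[s] = t - \floor[t]$, and the coordinatewise criterion above then gives $[(\lambda_i)_i, s] = [(\mu_i)_i, t]$. For surjectivity, fix $(x_i)_i$ in the target. By \lemref{lem:CW structure} each $x_i$ has a unique representative $x_i = [\mu_i, t_i]$ with $d(\mu_i) \le \One{k}$ and $t_i \in (0, d(\mu_i))$. The relation $\wilde p_i(x_i) = x_{i-1}$ reads $[p_i(\mu_i), t_i] = [\mu_{i-1}, t_{i-1}]$; since $\wilde p_i$ preserves degree, $p_i(\mu_i)$ is again a cube, so disjointness of the open cubes forces $p_i(\mu_i) = \mu_{i-1}$ and $t_i = t_{i-1}$. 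Hence all the $t_i$ coincide with a single $t$, the sequence $(\mu_i)_i$ lies in $\Lambda_\infty$, and $\wilde\pi_\infty\big([(\mu_i)_i, t]\big) = (x_i)_i$.

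It remains to show that the continuous bijection $\wilde\pi_\infty$ is a homeomorphism, and this is the heart of the matter. The plan is to trivialize both sides locally over $X_{\Lambda_0}$. For $x \in X_{\Lambda_0}$ let $N_x$ be the neighborhood constructed in \lemref{open} and shown to be evenly covered in \lemref{even}. Because that construction depends only on the cube-and-degree data around $x$, which every covering preserves, the decomposition $\wilde p_i^{-1}(N_y) = \bigsqcup_{y'} N_{y'}$ holds at every level and nests compatibly up the tower; hence each $N_x$ is evenly covered by every composite $\wilde p_1 \circ \cdots \circ \wilde p_n$. Consequently $\operatorname{pr}_0^{-1}(N_x) \cap \varprojlim(X_{\Lambda_n}, \wilde p_n)$ is homeomorphic to $N_x \times F_x$, where $F_x = \varprojlim\big((\wilde p_1 \circ \cdots \circ \wilde p_n)^{-1}(x)\big)$ is the fibre over $x$, which is profinite and hence nonempty by the finite-to-one hypothesis. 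Under $\wilde\pi_\infty$ this region corresponds to the set of $[(\lambda_i)_i, s] \in X_{\Lambda_\infty}$ with $[\lambda_0, s] \in N_x$, and for a fixed base point the compatible cube-lifts $(\lambda_i)_i$ are indexed by exactly the same set $F_x$; so $\wilde\pi_\infty$ carries this region onto $N_x \times F_x$ by the identity in the product coordinates. As $x$ ranges over $X_{\Lambda_0}$ the open sets $\operatorname{pr}_0^{-1}(N_x)$ cover the target (the projection $\operatorname{pr}_0$ is onto since each $F_x$ is nonempty), so $\wilde\pi_\infty$ is a local homeomorphism; being a continuous bijection, it is therefore a homeomorphism. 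The main obstacle is precisely this last step: verifying that the evenly covered neighborhoods $N_x$ propagate compatibly up the entire tower, so that both realizations acquire the same local product structure $N_x \times F_x$.
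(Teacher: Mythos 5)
Your verification that $\wilde\pi_\infty$ is well defined, continuous, and bijective is correct, and in places more direct than the paper's argument: the observation that $\sim$ holds in $Y_{\varprojlim(\Lambda_n,p_n)}$ if and only if it holds coordinatewise in every $Y_{\Lambda_n}$ gives well-definedness and injectivity in one stroke, and your surjectivity argument via the unique open-cube representatives of \lemref{lem:CW structure} is sound (the paper instead constructs an explicit inverse $\theta$). The tower-propagation part of your final step is also fine: since the sheets $N_y$ produced by \lemref{even} have the same form as $N_x$, induction gives $(\wilde p_1 \circ \cdots \circ \wilde p_n)^{-1}(N_x) = \bigsqcup_y N_y$ with each sheet carried homeomorphically onto $N_x$, and the standard argument then shows that $\operatorname{pr}_0^{-1}(N_x)$ intersected with the \emph{target} $\varprojlim(X_{\Lambda_n}, \wilde p_n)$ is homeomorphic to $N_x \times F_x$. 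The gap is that none of this says anything about the topology of the \emph{source} $X_{\varprojlim(\Lambda_n,p_n)}$, which is a quotient of $Y_{\varprojlim(\Lambda_n,p_n)} \subseteq \varprojlim(\Lambda_n,p_n) \times \RR^k$ and is a priori finer than anything visible through $\wilde\pi_\infty$.

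Concretely, the sentence ``so $\wilde\pi_\infty$ carries this region onto $N_x \times F_x$ by the identity in the product coordinates'' is circular: speaking of ``product coordinates'' on the region $\{[(\lambda_i)_i, s] : [\lambda_0,s] \in N_x\}$ presupposes that this region, \emph{in the quotient topology}, is homeomorphic to $N_x \times F_x$ --- and that is exactly the openness of $\wilde\pi_\infty$ you are trying to establish. At that point you have only a continuous bijection between the two regions, and a continuous bijection out of a quotient space need not be a homeomorphism; the entire content of the proposition beyond bijectivity lives in this step. (Your closing sentence locates the ``main obstacle'' in propagating the $N_x$ up the tower, but that part is routine; the real obstacle is the source topology, which your outline never engages.) To close the gap one must work with the quotient topology on $X_{\varprojlim(\Lambda_n,p_n)}$ directly: for instance, show that the inverse map is continuous by checking that its preimages of sub-basic open sets of the form $\{[(\mu_i)_i, t] : \mu_n = \lambda,\ t \in U\}$ are cylinder sets, hence open in the inverse limit. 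This is precisely how the paper finishes (the continuity computation for its map $\theta$), and it is the step your proposal replaces with an assertion.
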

\begin{proof}
We first construct continuous surjections $\wilde{\pi}_n : X_{\varprojlim (\Lambda_n, p_n)} \to
X_{\Lambda_n}$ such that $(p_n)_* \circ \wilde{\pi}_n = \wilde{\pi}_{n-1}$ for all $n \ge 1$. Fix
$n \in \NN$. Define $\pi^0_n : \bigcup_{m \in \NN^k} (\varprojlim (\Lambda^m_n, p_n)) \times [0,m]
\to Y_{\Lambda_n}$ by $\pi^0_n((\lambda_i)^\infty_{i=1}, t) = (\lambda_n, t)$. A basic open set in
$\Lambda^m_n \times [0,m]$ has the form $U \times B(t;\varepsilon)$ where $U \subseteq \Lambda^m_n$
for some $m \in \NN^k$ is open, $t \in [0,m]$, $\varepsilon > 0$, and the ball
$B(t;\varepsilon)$ is calculated in the metric space $[0,m]$. We have $(\pi^0_n)^{-1}(U \times
B(t;\varepsilon)) = Z(U,n) \times B(t; \varepsilon)$, where $Z(U,n)$ is the cylinder set
$\{(\lambda_i)^\infty_{i=1} \in \varprojlim (\Lambda_n, p_n))^m : \lambda_n \in U\}$. Since this
preimage is open, $\pi^0_n$ is continuous. Now define $\pi_n : (\varprojlim (\Lambda_n, p_n))^m
\times [0,m] \to X_{\Lambda_n}$ by $\pi_n = q \circ \pi^0_n$ where $q : Y_{\Lambda_n} \to
X_{\Lambda_n}$ is the quotient map. Then $\pi_n$ is also continuous. We claim that the formula
\[
    \wilde\pi_n([(\lambda_i)^\infty_{i=1}, t]) = \pi_n((\lambda_i)^\infty_{i=1}, t)
\]
determines a well-defined map $\wilde\pi_n : X_{\varprojlim (\Lambda_n, p_n)} \to X_{\Lambda_n}$.
Indeed, suppose that $[(\lambda_i)^\infty_{i=1}, t] = [(\mu_i)^\infty_{i=1}, s]$. Then $t - \lfloor
t \rfloor = s - \lfloor s \rfloor$, and
\[
\big(\lambda_i(\lfloor t \rfloor, \lceil t \rceil)\big)^\infty_{i=1}
    = (\lambda_i)^\infty_{i=1}(\lfloor t \rfloor, \lceil t \rceil)
    = (\mu_i)^\infty_{i=1}(\lfloor s \rfloor, \lceil s \rceil)
    = \big(\mu_i(\lfloor s \rfloor, \lceil s \rceil)\big)^\infty_{i=1}.
\]
In particular, $\lambda_n(\lfloor t \rfloor, \lceil t \rceil) = \mu_i(\lfloor s \rfloor, \lceil s
\rceil)$. Hence
\[
    \pi_n((\lambda_i)^\infty_{i=1}, t) = [\lambda_n, t] = [\mu_n,s] = \pi_n((\mu_i)^\infty_{i=1}, s),
\]
so $\wilde\pi_n$ is well-defined as claimed. Since $\pi_n$ is continuous, the definition of the
quotient topology on $X_{\varprojlim (\Lambda_n, p_n)}$ ensures that $\wilde\pi_n$ is continuous
too. Since the canonical map $P_n : \varprojlim (\Lambda_n, p_n) \to \Lambda_n$ is surjective for
each $n$, the map $\wilde\pi_n$ is also surjective. By definition of $(p_n)_*$, we have
\[
(p_n)_* \circ \wilde\pi_n([(\lambda_i)^\infty_{i=1}, t])
    = (p_n)_*([\lambda_n, t])
    = [\lambda_{n-1}, t]
    = \wilde\pi_{n-1}([(\lambda_i)^\infty_{i=1}, t]).
\]
The universal property of the projective limit $\varprojlim (X_{\Lambda_n}, (p_n)_*)$ now gives a
unique continuous surjection $\wilde\pi_\infty : X_{\varprojlim (\Lambda_n, p_n)} \to \varprojlim
(X_{\Lambda_n}, (p_n)_*)$ defined by
\[
    \wilde\pi_\infty([(\lambda_i)^\infty_{i=0}, t])
        = (\wilde\pi_n[(\lambda_i)^\infty_{i=0}, t])^\infty_{n=1}
        = [\lambda_n, t]^\infty_{n=0}.
\]

To complete the proof we must show that $\wilde\pi_\infty$ is injective with continuous inverse.
For this fix $([\lambda_n, t_n])^\infty_{n=0} \in \varprojlim (X_{\Lambda_n}, (p_n)_*)$. Then
$[p_n(\lambda_n), t_n] = [\lambda_{n-1}, t_{n-1}]$ for all $n$. For $n \ge 0$, let $\mu_n =
\lambda_n(\lfloor t_n \rfloor, \lceil t_n \rceil)$ and $s_n = t_n - \lfloor t_n \rfloor$. Fix $n
\ge 1$. By definition of the equivalence relation defining the $X_{\Lambda_n}$ we have $s_n =
s_{n-1}$ and
\[
p_n(\mu_n)
    = p_n(\lambda_n)(\lfloor t_n \rfloor, \lceil t_n \rceil)
    = \lambda_{n-1}(\lfloor t_{n-1} \rfloor, \lceil t_{n-1} \rceil)
    = \mu_{n-1}.
\]
Let $s = s_0$. Then $s_n = s_0$ for all $n$, and $([\lambda_n, t_n])^\infty_{n=0} = ([\mu_n,
s])^\infty_{n=0} = \wilde\pi_\infty([(\mu_n)^\infty_{n=0}, s])$. So we may define $\theta :
\varprojlim (X_{\Lambda_n}, (p_n)_*) \to X_{\varprojlim (\Lambda_n, p_n)}$ by $([\lambda_n,
t_n])^\infty_{n=0} \mapsto [(\mu_n)^\infty_{n=0}, s]$ where the $\mu_n$ and $s$ are obtained from
the $\lambda_n$ and $t_n$ as above. The above argument establishes that $\wilde\pi_\infty \circ
\theta$ is the identity map on $X_{\varprojlim (\Lambda_n, p_n)}$. Hence $\wilde{\pi}_\infty$ is
surjective. On the other hand
\begin{align*}
\theta \circ \wilde\pi_\infty\big([(\lambda_n)^\infty_{n=0}, t]\big)
    &= \big[(\lambda_n(\lfloor t \rfloor, \lceil t \rceil))^\infty_{n=0}, t - \lfloor t \rfloor\big] \\
    &= [(\lambda_n)^\infty_{n=0}, t].
\end{align*}
Hence $\theta$ is an algebraic inverse for $\wilde\pi_\infty$. To see that $\theta$ is continuous,
it is enough, as for the other direction, to observe that if $\lambda \in \Lambda_n$ and $U$ is
open in $[0,d(\lambda)]$, then
\begin{align*}
\theta^{-1}(\{[(\mu_i)^\infty_{i=0}, t] \in X_{\varprojlim (\Lambda_i, p_i)} :&{} \mu_n = \lambda, t \in U\}) \\
    &= \wilde\pi_\infty(\{[(\mu_i)^\infty_{i=0}, t] \in X_{\varprojlim (\Lambda_i, p_i)} : \mu_n = \lambda, t \in U\}) \\
    &= \{([\mu_i, t])^\infty_{i=0} : \mu_n = \lambda, t \in U\} \\
    &= Z(\{[\lambda, t] : t \in U\}, n).
\end{align*}
So the preimage under $\theta$ of a sub-basic open set in the image of any connected component of
$Y_{\varprojlim(\Lambda_n, p_n)}$ is the cylinder set of the image of a basic open set in some
component of some $Y_{\Lambda_n}$. Continuity of $\theta$ then follows from the definition of the
quotient topology.
\end{proof}

\begin{cor}
Let $(\Lambda_n, p_n)$ be a system of coverings of $k$-graphs. Then $\pi_1(X_{\varprojlim
(\Lambda_n, p_n)}) \cong \varprojlim(\pi_1(X_{\Lambda_n}), \wilde{(p_n)}_*) \cong
\varprojlim(\pi_1(\Lambda_n), (p_n)_*)$.
\end{cor}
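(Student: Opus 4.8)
The plan is to establish the two isomorphisms separately, reading the statement from right to left. The rightmost isomorphism $\varprojlim(\pi_1(X_{\Lambda_n}), \wilde{(p_n)}_*) \cong \varprojlim(\pi_1(\Lambda_n), (p_n)_*)$ is the routine one: for each $n$, \corref{isomorphism} gives an isomorphism $\pi_1(X_{\Lambda_n}) \cong \pi_1(\Lambda_n)$, and \lemref{functor group} applied to the covering $p_n : \Lambda_n \to \Lambda_{n-1}$ shows that these isomorphisms intertwine the bonding maps $\wilde{(p_n)}_*$ and $(p_n)_*$. Thus the two inverse systems of groups are isomorphic as inverse systems, and hence so are their inverse limits. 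I would spell out only the naturality square coming from \lemref{functor group} and then invoke functoriality of $\varprojlim$.

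For the leftmost isomorphism $\pi_1(X_{\varprojlim(\Lambda_n, p_n)}) \cong \varprojlim(\pi_1(X_{\Lambda_n}), \wilde{(p_n)}_*)$, I would first use the homeomorphism $\wilde{\pi}_\infty$ of the preceding proposition to replace the left-hand side by $\pi_1(\varprojlim(X_{\Lambda_n}, \wilde{p}_n))$. It then remains to show that $\pi_1$ carries the inverse limit of the tower $\wilde{p}_n : X_{\Lambda_n} \to X_{\Lambda_{n-1}}$ to the inverse limit of the groups $\pi_1(X_{\Lambda_n})$. By \thmref{cover} each $\wilde{p}_n$ is a covering map, so the tower is a tower of coverings of the connected CW-complex $X_{\Lambda_0}$. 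Writing $Y_n = X_{\Lambda_n}$, $q_n = \wilde{p}_n$, and $Y_\infty = \varprojlim(Y_n, q_n)$, with compatible basepoints $y_n$ and $y_\infty = (y_n)$, functoriality of $\pi_1$ and the universal property of the inverse limit give a natural homomorphism $\Theta : \pi_1(Y_\infty, y_\infty) \to \varprojlim(\pi_1(Y_n, y_n), (q_n)_*)$, and the content is to show that $\Theta$ is a bijection.

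Both directions are lifting arguments carried out levelwise in the nice spaces $Y_n$ and assembled through the universal property of $Y_\infty$, so that the possibly poor local behaviour of $Y_\infty$ itself never enters. For injectivity, a loop $\gamma$ in $Y_\infty$ with each projection $P_n \circ \gamma$ null-homotopic gives, after choosing a null-homotopy $H_0 : [0,1]^2 \to Y_0$ of $P_0 \circ \gamma$ and lifting it successively through the coverings $q_n$ (the unit square is simply connected, so the lifts exist and are unique once basepoints are fixed), compatible null-homotopies $H_n$ with $q_n \circ H_n = H_{n-1}$ whose boundary loops are the $P_n \circ \gamma$; these assemble to a null-homotopy of $\gamma$ in $Y_\infty$. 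For surjectivity, given a compatible family $([\gamma_n])_n$ with $q_n \circ \gamma_n \simeq \gamma_{n-1}$, I would inductively replace each $\gamma_n$ by a homotopic loop satisfying $q_n \circ \gamma_n = \gamma_{n-1}$ on the nose, using the homotopy-lifting property of $q_n$ to push the representative at level $n$ to one that projects exactly to the already-fixed representative at level $n-1$; the resulting strictly compatible family defines a loop in $Y_\infty$ mapping to $([\gamma_n])_n$ under $\Theta$. The main obstacle is precisely this leftmost isomorphism: the interchange of $\pi_1$ with an inverse limit fails for general towers, and getting it right here relies essentially on the covering property supplied by \thmref{cover} (through unique path- and homotopy-lifting) together with the homeomorphism of the preceding proposition, whereas the rightmost isomorphism is a formal consequence of \corref{isomorphism} and \lemref{functor group}.
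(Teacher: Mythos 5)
Your proof is correct, but it takes a genuinely different route from the paper at the key step. The right-hand isomorphism is handled identically in both: \corref{isomorphism} plus the naturality square of \lemref{functor group} identify the inverse system $(\pi_1(X_{\Lambda_n}), \wilde{(p_n)}_*)$ with $(\pi_1(\Lambda_n),(p_n)_*)$. For the left-hand isomorphism, however, the paper does not argue by hand: after the identification $X_{\varprojlim(\Lambda_n,p_n)}\cong\varprojlim(X_{\Lambda_n},\wilde p_n)$ of the preceding proposition (left implicit in the paper's proof, but made explicit in yours), it quotes the Milnor-type exact sequence for a tower of fibrations \cite[Proposition~4.67]{Hatcher},
\[
1 \to {\varprojlim}^1\, \pi_2(X_{\Lambda_n}) \to \pi_1\big(\varprojlim (X_{\Lambda_n},\wilde p_n)\big) \to \varprojlim\big(\pi_1(X_{\Lambda_n}), \wilde{(p_n)}_*\big) \to 1,
\]
and kills the ${\varprojlim}^1$ term by citing \cite{spanier} to the effect that covering maps are fibrations with unique path lifting and hence induce isomorphisms on $\pi_2$, together with \cite{massey} for injectivity on $\pi_1$. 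Your proposal instead reproves the relevant special case of this machinery directly: your injectivity argument (lift a null-homotopy of $P_0\circ\gamma$ up the tower using simple connectivity of the square and unique path lifting) and your surjectivity argument (rectify a compatible family of homotopy classes to a strictly compatible family of loops via homotopy lifting) are precisely the places where, for a general tower of fibrations, the ${\varprojlim}^1\pi_2$ obstruction would intrude; for coverings both lifting arguments go through, and as written they are sound. What the paper's approach buys is brevity and a conceptual explanation of why the interchange of $\pi_1$ with $\varprojlim$ is legitimate here (vanishing of ${\varprojlim}^1\pi_2$ by the Mittag-Leffler condition); what yours buys is a self-contained, elementary proof needing only \thmref{cover}, the preceding proposition, and standard covering-space lifting, with no appeal to fibrations or ${\varprojlim}^1$ technology.
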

\begin{proof}
By \cite[Theorem~V.4.1]{massey}, the covering maps $(p_n)_*$ induce injective homomorphisms
$\wilde{(p_n)}_*$ of fundamental groups. Theorems II.2.2~and~II.2.3 of \cite{spanier} imply that
the covering maps $(p_n)_* : X_{\Lambda_{n+1}} \to X_{\Lambda_n}$ are fibrations with unique path
lifting, so \cite[Corollary~VII.2.11]{spanier} implies that the maps $\pi_2(X_{\Lambda_{n+1}}) \to
\pi_2(X_{\Lambda_n})$ induced by the $(p_n)_*$ are isomorphisms. It therefore follows from
\cite[Proposition~4.67]{Hatcher} that
\[
\pi_1(X_{\varprojlim (\Lambda_n, p_n)}) \cong \varprojlim(\pi_1(X_{\Lambda_n}), \wilde{(p_n)}_*).
\]
That $\varprojlim(\pi_1(X_{\Lambda_n}), \wilde{(p_n)}_*) \cong \varprojlim(\pi_1(\Lambda_n),
(p_n)_*)$ follows from Lemma~\ref{functor group}.
\end{proof}

\section{Crossed products and mapping tori}\label{sec:crossed products}

Let $\Lambda$ be a $k$-graph, and $\alpha : \ZZ^l \to \Aut(\Lambda)$ an action by automorphisms.
Recall that the crossed-product $k$-graph $\Lambda \times_\alpha \ZZ^l$ is equal as a set to $\Lambda
\times \NN^l$ and has operations $r(\lambda,m) = (r(\lambda), 0)$, $s(\lambda, m) =
(\alpha_{-m}(s(\lambda)), 0)$ and $(\lambda,m)(\mu,n) = (\lambda\alpha_m(\mu), m + n)$.

Now let $X$ be a topological space, and $\sigma$ an action of $\ZZ^l$ on $X$ by homeomorphisms.
Then there is an action $\sigma \times \lt$ of $\ZZ^l$ on $X \times \RR^l$ given by $(\sigma \times
\lt)_m(x, t) = (\sigma_m(x), m + t)$. The \emph{mapping torus} of $\sigma$ is the orbit space
\[
M(\sigma) = (X \times \RR^l)/(\sigma \times \lt).
\]
We denote the equivalence class of $(x, t) \in X \times \RR^l$ in the mapping torus by
$[x,t]_{M(\sigma)}$, where the subscript is to distinguish such classes from elements of
topological realizations $X_\Lambda$ of $k$-graphs $\Lambda$, or simply by $[x,t]$ if there is no
possibility of confusion.

In the following Lemma, we identify $\RR^{k+l}$ with $\RR^k \times \RR^l$ in the standard way.

\begin{lem}\label{lem:mapping-torus}
Let $\Lambda$ be a $k$-graph and $\alpha$ an action of $\ZZ^l$ on $\Lambda$. Let $\wilde{\alpha}$
be the induced action of $\ZZ^l$ on $X_\Lambda$ obtained from functoriality of topological
realization. Then there is a homeomorphism $\varphi : M(\wilde{\alpha}) \cong X_{\Lambda
\times_\alpha \ZZ^l}$ determined by
\begin{equation}\label{eq:phi formula}
\varphi\big(\big[[\lambda,s], t\big]_{M(\wilde{\alpha})}\big) = [(\lambda, \ceil[t]), (s,t)]
\end{equation}
whenever $t \ge 0$.
\end{lem}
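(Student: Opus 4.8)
The plan is to produce an explicit inverse and check continuity of both maps. Since here $\pi=\id$, \propref{prp:functorial} gives $\wilde\alpha_m([\lambda,s])=[\alpha_m(\lambda),s]$, so the $\ZZ^l$-action on $M(\wilde\alpha)$ is $(\wilde\alpha\times\lt)_m([\lambda,s],t)=([\alpha_m(\lambda),s],\,m+t)$. Applying $(\wilde\alpha\times\lt)_{-\floor[t]}$ to any $([\lambda,s],t)$ yields the equivalent representative $([\alpha_{-\floor[t]}(\lambda),s],\,t-\floor[t])$ whose second coordinate lies in $[0,\One{l})$; hence every class in $M(\wilde\alpha)$ has a representative with $t\ge 0$, and the given formula at least determines a function once well-definedness is checked. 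The computational backbone of the argument is the following consequence of two applications of the factorization property together with the fact that each $\alpha_n$ is an automorphism: for $(\lambda,m)\in\Lambda\times_\alpha\ZZ^l$ and $p=(p',p'')\le q=(q',q'')\le(d(\lambda),m)$ with $p',q'\in\NN^k$ and $p'',q''\in\NN^l$,
\[
(\lambda,m)(p,q)=\big(\alpha_{-p''}(\lambda(p',q')),\,q''-p''\big).
\]
Taking $p=\floor[{(u,v)}]$ and $q=\ceil[{(u,v)}]$ then computes the canonical representative of any class $[(\lambda,m),(u,v)]$ in $X_{\Lambda\times_\alpha\ZZ^l}$.

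Next I would introduce the candidate inverse $\psi:X_{\Lambda\times_\alpha\ZZ^l}\to M(\wilde\alpha)$ by the simple formula $\psi([(\lambda,m),(u,v)])=[[\lambda,u],v]_{M(\wilde\alpha)}$. To see that $\psi$ is well defined, suppose $[(\lambda,m),(u,v)]=[(\lambda',m'),(u',v')]$; unwinding the definition of $\sim$ and applying the displayed identity reduces this to $u-\floor[u]=u'-\floor[u']$, $v-\floor[v]=v'-\floor[v']$, and $\alpha_{-\floor[v]}(\lambda(\floor[u],\ceil[u]))=\alpha_{-\floor[v']}(\lambda'(\floor[u'],\ceil[u']))$, which are precisely the conditions needed for $[[\lambda,u],v]_{M(\wilde\alpha)}=[[\lambda',u'],v']_{M(\wilde\alpha)}$ (pass through the reduced representative $[[\alpha_{-\floor[v]}(\lambda),u],\,v-\floor[v]]_{M(\wilde\alpha)}$). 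Continuity of $\psi$ is the easy direction: on each piece $\{(\lambda,m)\}\times[0,(d(\lambda),m)]$ of the defining disjoint union, the composite of $\psi$ with the quotient map is $(u,v)\mapsto[[\lambda,u],v]_{M(\wilde\alpha)}$, which factors as the continuous map $(u,v)\mapsto([\lambda,u],v)$ into $X_\Lambda\times\RR^l$ followed by the quotient onto $M(\wilde\alpha)$. Since the map onto $X_{\Lambda\times_\alpha\ZZ^l}$ is a quotient map, $\psi$ is continuous.

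Finally I would verify that $\varphi$ and $\psi$ are mutually inverse and that $\varphi$ is continuous. Using the displayed identity one checks that the extended formula $\varphi([[\lambda,s],t]_{M(\wilde\alpha)})=[(\alpha_{-\floor[t]}(\lambda),\ceil[t]-\floor[t]),(s,t-\floor[t])]$ agrees with the stated one when $t\ge 0$, is invariant under $(\wilde\alpha\times\lt)_m$ (the shift by $m\in\ZZ^l$ cancels), and is therefore well defined on $M(\wilde\alpha)$; direct substitution via the identity then gives $\psi\circ\varphi=\id$ and $\varphi\circ\psi=\id$. The one genuinely delicate point, which I expect to be the main obstacle, is continuity of $\varphi$. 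Pulling back along the composite quotient map $Y_\Lambda\times\RR^l\to X_\Lambda\times\RR^l\to M(\wilde\alpha)$ (here $q_\Lambda\times\id_{\RR^l}$ is a quotient map because $\RR^l$ is locally compact Hausdorff), the map becomes $(s,t)\mapsto[(\alpha_{-\floor[t]}(\lambda),\ceil[t]-\floor[t]),(s,t-\floor[t])]$ on each component; this is continuous wherever $t$ has no integer coordinate, since there $\floor[t]$ is locally constant. The difficulty is continuity across the hyperplanes where some $t_j\in\ZZ$: there the representing cube of $\Lambda\times_\alpha\ZZ^l$ changes and the coordinate $t_j-\floor[t]_j$ jumps between $1$ and $0$, but the two one-sided limits are identified in $X_{\Lambda\times_\alpha\ZZ^l}$ exactly by the equivalence relation $\sim$. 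I would dispatch this as in \lemref{lem:CW structure} and \propref{prp:functorial}: given an open $U$ and a point of its preimage lying over such a boundary, exhibit an explicit open ``tube'' in $Y_\Lambda\times\RR^l$ (the analogue of the set $W$ constructed in the proof of \lemref{lem:CW structure}) whose image lies in $U$, thereby showing that the preimage of $U$ is open. Once $\varphi$ is continuous, it and $\psi$ are mutually inverse continuous bijections, so $\varphi$ is the desired homeomorphism.
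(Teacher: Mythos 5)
Your proposal is correct, and it reorganizes the argument rather than reproducing the paper's. The paper proves well-definedness of $\varphi$ by an inline computation with composition in $\Lambda\times_\alpha\ZZ^l$, obtains injectivity by ``reversing'' that computation and surjectivity by inspection, and then proves continuity of $\varphi$ via preimages of closed cubes and continuity of $\varphi\inv$ by restricting to closed cubes and citing the weak topology of the CW-structure. You instead isolate the composition identity $(\lambda,m)(p,q)=\big(\alpha_{-p''}(\lambda(p',q')),\,q''-p''\big)$ --- which is correct, and is exactly the computational content of the paper's well-definedness paragraph --- and use it to produce an explicit two-sided inverse $\psi([(\lambda,m),(u,v)])=[[\lambda,u],v]_{M(\wilde\alpha)}$. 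That buys you bijectivity for free (the paper's injectivity and surjectivity claims are handled by the mutual-inverse check), and your proof that $\psi$ is continuous, factoring through the defining quotient maps piece by piece, is tighter than the paper's corresponding weak-topology step for $\varphi\inv$. The cost is that continuity of $\varphi$ itself becomes the hard step: your reduction via Whitehead's theorem (that $q_\Lambda\times\id_{\RR^l}$ is a quotient map because $\RR^l$ is locally compact Hausdorff) is the right move, but the behaviour of the lifted formula across the hyperplanes where some $t_j\in\ZZ$ is only sketched, with a promised ``tube'' construction in the style of \lemref{lem:CW structure}.

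That sketch can be carried out, but your own identity gives a cleaner finish that needs no tubes: it shows that the class $[(\mu,p),(u,v)]$ is independent of $p$ whenever $0\le v\le p$, since any two choices of $p$ yield the same canonical representative $\big(\alpha_{-\floor[v]}(\mu(\floor[u],\ceil[u])),\,\ceil[v]-\floor[v]\big)$ and the same fractional part. Hence for any $N\in\NN$, on $\{\lambda\}\times[0,d(\lambda)]\times(-N,N)^l$ the lifted map equals $(s,t)\mapsto\big[(\alpha_{N\One{l}}(\lambda),\,2N\One{l}),\,(s,\,t+N\One{l})\big]$, a formula in which no floor or ceiling functions appear; it is manifestly continuous, being the affine map $(s,t)\mapsto(s,t+N\One{l})$ followed by inclusion into the $(\alpha_{N\One{l}}(\lambda),2N\One{l})$ component of the disjoint union and the quotient map. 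Since these open sets cover $Y_\Lambda\times\RR^l$, the lifted map is continuous, so $\varphi$ is continuous and your argument is complete.
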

\begin{proof}
For any $\big[[\lambda,s],t\big]_{M(\wilde{\alpha})} \in M(\wilde{\alpha})$ and $p \in \NN^l$ such
that $p + t \ge 0$, we have
\[
\big[[\lambda,s], t\big]_{M(\wilde{\alpha})}
    = \big[(\wilde{\alpha} \times \lt)_p([\lambda,s], t)\big]_{M(\wilde{\alpha})}
    = \big[[\alpha(\lambda),s], t + p\big]_{M(\wilde{\alpha})},
\]
so each point in $X_\Lambda$ has the form $\big[[\lambda,s], t\big]_{M(\wilde{\alpha})}$ where $t >
0$. To see that $\varphi$ is well-defined, suppose $[[\lambda,s],t]_{M(\wilde{\alpha})} =
[[\lambda',s'],t']_{M(\wilde{\alpha})}$ with $t, t' \ge 0$. Then $t - \floor[t] = t' - \floor[t']$
and
\[
[\alpha_{\floor[t]}(\lambda), s]
    = (\wilde{\alpha} \times \lt)_{\floor[t]}(\lambda,s)
    = (\wilde{\alpha} \times \lt)_{\floor[t']}(\lambda',s')
    = [\alpha_{\floor[t']}(\lambda'), s'].
\]
Hence $s - \floor[s] = s' - \floor[s']$, and
\begin{equation}\label{eq:product action image}
\big(\alpha_{\floor[t]}(\lambda)\big)(\floor[s], \ceil[s]) = \big(\alpha_{\floor[t']}(\lambda')\big)(\floor[s'], \ceil[s']).
\end{equation}
We have $\big(\alpha_{\floor[t]}(\lambda), 0\big)((\floor[s],0), (\ceil[s],0)) =
(\lambda,t)((\floor[s],\floor[t]), (\ceil[s],\floor[t]))$ by definition of composition in $\Lambda
\times_\alpha \ZZ^l$. Substituting this and the symmetric equality for $\lambda'$
into~\eqref{eq:product action image} gives
\[
(\lambda,t)((\floor[s],\floor[t]), (\ceil[s],\floor[t])) = (\lambda',t')((\floor[s'],\floor[t']), (\ceil[s'],\floor[t'])).
\]
Multiplying both sides on the right by $(s(\lambda), \ceil[t] - \floor[t]) =
(s(\lambda'), \ceil[t'] - \floor[t'])$, we obtain
\[
(\lambda,t)((\floor[s],\floor[t]), (\ceil[s],\ceil[t])) = (\lambda',t')((\floor[s'],\floor[t']), (\ceil[s'],\ceil[t'])).
\]
Since $s - \floor[s] = s' - \floor[s']$ and $t - \floor[t] = t' - \floor[t']$, we have $(s,t) -
\floor[(s,t)] = (s',t') - \floor[(s',t')]$, whence $[(\lambda, \ceil[t]), (s,t)] = [(\lambda',
\ceil[t']), (s',t')]$ as required. In particular, given any
$\big[[\lambda,s],t\big]_{M(\wilde{\alpha})}$, any two representatives of this element with
positive $t$-value have the same image under the formula~\eqref{eq:phi formula}. So there is a
well-defined map $\varphi : M(\wilde{\alpha}) \to X_{\Lambda \times_\alpha \ZZ^l}$
satisfying~\eqref{eq:phi formula}. The map $\varphi$ is clearly surjective. To see that it is
injective, just reverse the reasoning of the preceding paragraph: if $\varphi\big(\big[[\lambda,s],
t\big]_{M(\wilde{\alpha})}\big) = \varphi\big(\big[[\lambda',s'],
t'\big]_{M(\wilde{\alpha})}\big)$, then
\begin{align*}
(s,t) - \floor[(s,t)]
    &= (s',t') - \floor[(s',t')],\quad\text{ and } \\
\big(\alpha_{\floor[t]}(\lambda)\big)(\floor[s], \ceil[s])
    &= \big(\alpha_{\floor[t']}(\lambda')\big)(\floor[s'], \ceil[s']),
\end{align*}
whence $\big[[\lambda,s], t\big]_{M(\wilde{\alpha})} = \big[[\lambda',s'],
t'\big]_{M(\wilde{\alpha})}$.

To see that $\varphi$ is continuous, observe that if $d(\lambda) \le \One{k}$ and $n \le \One{l}$,
then the inverse image of the closed cube $\overline{Q_{(\lambda,n)}}$ under $\varphi$ is
\[
\big\{\big[[\lambda,s], t\big]_{M(\wilde{\alpha})} : 0 \le s \le \One{k}, 0 \le t \le \One{l}\big\}
    = (\overline{Q_\lambda} \times [0, \ceil[t]])/(\wilde{\alpha} \times \lt),
\]
which is closed.

Finally, to prove that $\varphi$ is a homeomorphism, it remains to verify that the inverse
$\varphi\inv: X_{\Lambda\times_\alpha \ZZ^l} \to M(\wilde{\alpha})$ is also continuous. Fix
$\lambda \in \Lambda$ with $d(\lambda) \le \One{k}$ and fix $n \le \One{l}$. Then the restriction
of $\varphi\inv$ to $\overline{Q_{(\lambda,n)}}$ is a homeomorphism onto $\big[[\lambda',s'],
t'\big]_{M(\wilde{\alpha})}$ and is therefore continuous.  Since $X_{\Lambda\times_\alpha \ZZ^l}$
is endowed with the weak topology determined by closed cubes, this proves that
$\varphi\inv$ is continuous as required.
\end{proof}

Recall that a \emph{deck transformation} of a covering map $p : X \to Y$ is a homeomorphism $g$ of $X$
such that $p \circ g = p$. The deck transformations of $p$ form a group $D(p)$.

\begin{prop}\label{prp:extension}
Let $X$ be a connected $CW$-complex and let $\sigma$ be an action of $\ZZ^l$ on $X$ by
homeomorphisms. Fix $x_0 \in X$. Let $i_X : X \to M(\sigma)$ denote the embedding given by $i_X(x)
= [x,0]$. Then $i_X$ induces an injection $(i_X)_*:  \pi_1(X,x_0) \to \pi_1(M(\sigma),[x_0,0])$
such that $(i_X)_*(\pi_1(X,x_0))$ is a normal subgroup of $\pi_1(M(\sigma),[x_0,0])$. Moreover,
\[
\pi_1(M(\sigma),[x_0,0])/(i_X)_*( \pi_1(X,x_0)) \cong \ZZ^l.
\]
\end{prop}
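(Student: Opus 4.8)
The plan is to realize $M(\sigma)$ as the base of a regular covering with total space $X \times \RR^l$ and deck group $\ZZ^l$, and then to read off all three assertions from the standard short exact sequence attached to such a covering. Since $\RR^l$ is contractible, $\pi_1(X \times \RR^l)$ is canonically identified with $\pi_1(X)$, and $i_X$ factors through this covering, so injectivity, normality, and the computation of the quotient all fall out simultaneously.

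First I would verify that the $\ZZ^l$-action $\sigma \times \lt$ on $X \times \RR^l$ is a covering space action. Given $(x,t)$, the open set $V = X \times B(t;1/2)$ satisfies $(\sigma \times \lt)_m(V) = X \times B(m+t;1/2)$, and for $m \neq 0$ the integer vector $m$ has norm at least $1$, so $B(m+t;1/2) \cap B(t;1/2) = \emptyset$; hence $(\sigma\times\lt)_m(V) \cap V = \emptyset$. Thus the quotient map $q : X \times \RR^l \to M(\sigma)$ is a covering map, and by \cite[Proposition~1.40]{Hatcher} it is a regular covering whose group of deck transformations is exactly $\ZZ^l$, with
\[
\pi_1(M(\sigma),[x_0,0])\big/ q_*\big(\pi_1(X \times \RR^l, (x_0,0))\big) \cong \ZZ^l.
\]
Here I use that $X$, being a CW-complex, is path-connected and locally path-connected, so the same holds for $X \times \RR^l$, as required to apply the proposition.

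Next I would identify $(i_X)_*$ with $q_*$ up to the canonical isomorphism $\pi_1(X) \cong \pi_1(X \times \RR^l)$. Writing $j : X \to X \times \RR^l$ for $j(x) = (x,0)$, we have $i_X = q \circ j$, and since $\RR^l$ is contractible the induced map $j_* : \pi_1(X,x_0) \to \pi_1(X \times \RR^l,(x_0,0))$ is an isomorphism. Because the total space is path-connected, $q_*$ is injective, so $(i_X)_* = q_* \circ j_*$ is injective. Its image equals $q_*(\pi_1(X \times \RR^l,(x_0,0)))$, which is the kernel of the quotient map onto the deck group $\ZZ^l$ and is therefore normal in $\pi_1(M(\sigma),[x_0,0])$; and the quotient by this image is $\ZZ^l$, as claimed.

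The step requiring the most care is confirming that $\sigma \times \lt$ really does act as a covering space action and that the full deck group is \emph{exactly} $\ZZ^l$ with no additional deck transformations; this is precisely where path-connectedness of $X \times \RR^l$ enters, via \cite[Proposition~1.40]{Hatcher}. Pinning down the factorization $i_X = q \circ j$ with $j_*$ an isomorphism is the only other point to check. Once these are in place, the conclusion is just the standard short exact sequence $1 \to \pi_1(X) \to \pi_1(M(\sigma)) \to \ZZ^l \to 1$ of a regular covering.
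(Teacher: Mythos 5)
Your proposal is correct and follows essentially the same route as the paper's own proof: verify that $\sigma \times \lt$ satisfies the covering-space-action condition on $X \times \RR^l$, invoke \cite[Proposition~1.40]{Hatcher} to get the regular covering $q$ with deck group $\ZZ^l$ together with normality and the quotient computation, and then identify $(i_X)_* = q_* \circ j_*$ using contractibility of $\RR^l$ and injectivity of $q_*$. The only cosmetic differences are your choice of the saturated neighborhood $X \times B(t;1/2)$ in place of the paper's $U \times B(t;\tfrac13)$, and your citing standard covering-space facts directly where the paper quotes Massey.
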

\begin{proof}
Functoriality of $\pi_1$ yields a homomorphism $(i_X)_* : \pi_1(X,x_0) \to
\pi_1(M(\sigma),[x_0,0])$.

Consider the space $X \times \RR^l$. Let $\sigma \times \lt$ be the action of $\ZZ^l$ determined by
$(\sigma \times \lt)_n(x,t) = (\sigma^n(x), t + n)$. Then $M(\sigma)$ is by definition the orbit
space of this action. For $(x,t) \in X \times \RR^l$, any open neighborhood $N$ of the form $U
\times B(t; \frac{1}{3})$ of $(x,t)$ has the property that $(\sigma \times \lt)_m(N) \cap (\sigma
\times \lt)_n(N) = \emptyset$ for distinct $m,n \in \ZZ^l$. So $\sigma \times \lt$ satisfies
condition~(*) of \cite[Page~72]{Hatcher}. Hence \cite[Proposition~1.40]{Hatcher} implies first that
the quotient map $q : X \times \RR^l \to M(\sigma)$ is a regular covering whose deck-transformation
group $D(q)$ is isomorphic to $\ZZ^l$, second that $q_*(\pi_1(X \times \RR^l, (x_0, 0)))$ is a
normal subgroup of $\pi_1(M(\sigma))$, and third that the quotient is isomorphic to $D(q)$. So we
just need to see that $(i_X)_*$ is an injection and its image coincides with $q_*(\pi_1(X \times
\RR^l, (x_0, 0)))$. For this, observe that since $\pi_1(\RR^l)$ is trivial,
\cite[Theorem~II.7.1]{massey} implies that $j_X : x \mapsto (x,0)$ from $X$ to $X\times\RR^l$
induces an isomorphism $(j_X)_* : \pi_1(X, x_0) \to \pi_1(X \times \RR^l, (x_0, 0))$. We have
$(i_X)_* = (q \circ j_X)_* = q_* \circ (j_X)_*$. Since \cite[Theorem~V.4.1]{massey} implies that
$q_*$ is injective, it follows that $(i_X)_*$ is injective with the same image as $q_*$, as
required.
\end{proof}

The following Corollary is an immediate consequence of \propref{prp:extension}, the functoriality
of the fundamental group and \corref{isomorphism}.

\begin{cor}
Let $\Lambda$ be a connected $k$-graph, let $u \in \Lambda^0$
and let $\alpha$ be an action of $\ZZ^l$ on $\Lambda$.  Then there is an extension
\[
1 \to \pi_1(\Lambda, u) \xrightarrow{(i_\Lambda)_*}
\pi_1(\Lambda \times_\alpha \ZZ^l, (u,0)) \to \ZZ^l \to 0,
\]
where $i_\Lambda: \Lambda \to \Lambda \times_\alpha \ZZ^l$ is the canonical embedding.
\end{cor}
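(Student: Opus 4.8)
The plan is to transport the purely topological extension furnished by \propref{prp:extension} across the identifications established earlier in the paper. First I would apply \propref{prp:extension} with $X = X_\Lambda$, which is a connected CW-complex since $\Lambda$ is connected, with basepoint $x_0 = u = [u,0]$, and with $\sigma = \wilde\alpha$ the action of $\ZZ^l$ by homeomorphisms induced from $\alpha$ via functoriality (\propref{prp:functorial}). This yields a short exact sequence
\[
1 \to \pi_1(X_\Lambda, u) \xrightarrow{(i_{X_\Lambda})_*} \pi_1(M(\wilde\alpha), [u,0]) \to \ZZ^l \to 0,
\]
in which $i_{X_\Lambda}(x) = [x,0]$ embeds $X_\Lambda$ into the mapping torus.

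Next I would invoke \lemref{lem:mapping-torus}, which provides a homeomorphism $\varphi : M(\wilde\alpha) \to X_{\Lambda\times_\alpha\ZZ^l}$. A direct check from \eqref{eq:phi formula} shows that $\varphi$ sends the basepoint $[u,0]$ of $M(\wilde\alpha)$ to the base vertex $(u,0)$ of $X_{\Lambda\times_\alpha\ZZ^l}$, so that $\varphi_*$ is an isomorphism $\pi_1(M(\wilde\alpha),[u,0]) \cong \pi_1(X_{\Lambda\times_\alpha\ZZ^l}, (u,0))$. Moreover the canonical embedding $i_\Lambda : \lambda \mapsto (\lambda,0)$ is a $\pi$-quasimorphism for the coordinate inclusion $\pi : \NN^k \to \NN^{k+l}$, and comparing \eqref{eq:phi formula} with the defining formula of \propref{prp:functorial} gives $\varphi \circ i_{X_\Lambda} = \wilde{i_\Lambda}$ (both send $[\lambda,s]$ to $[(\lambda,0),(s,0)]$). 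Hence $\varphi_*$ carries $(i_{X_\Lambda})_*$ to $(\wilde{i_\Lambda})_*$, and the displayed sequence becomes
\[
1 \to \pi_1(X_\Lambda, u) \xrightarrow{(\wilde{i_\Lambda})_*} \pi_1(X_{\Lambda\times_\alpha\ZZ^l}, (u,0)) \to \ZZ^l \to 0.
\]

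Finally I would pass to the algebraic side. \corref{isomorphism} supplies isomorphisms $\pi_1(\Lambda,u) \cong \pi_1(X_\Lambda,u)$ and $\pi_1(\Lambda\times_\alpha\ZZ^l,(u,0)) \cong \pi_1(X_{\Lambda\times_\alpha\ZZ^l},(u,0))$. Since $i_\Lambda$ carries edges to edges, it restricts to a genuine morphism of $1$-skeletons, so the argument of \lemref{functor group} applies verbatim to show that these isomorphisms intertwine $(i_\Lambda)_*$ with $(\wilde{i_\Lambda})_*$; transporting the previous sequence along them produces exactly the asserted extension. The step requiring the most care is this last naturality claim: one must confirm that the embedding $(i_\Lambda)_*$ of $k$-graph fundamental groups corresponds, under all the intervening isomorphisms together with the homeomorphism $\varphi$, to the topological inclusion $(i_{X_\Lambda})_*$ appearing in \propref{prp:extension}. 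Everything else is a direct application of functoriality already established in the paper.
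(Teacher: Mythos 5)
Your proposal is correct and takes essentially the same route as the paper, whose own proof simply observes that the corollary is an immediate consequence of \propref{prp:extension}, \lemref{lem:mapping-torus}, functoriality of $\pi_1$, and \corref{isomorphism}. You have merely made explicit the details the paper leaves implicit (the basepoint check, the identity $\varphi \circ i_{X_\Lambda} = \wilde{i_\Lambda}$, and the observation that the naturality argument of \lemref{functor group} extends to the quasimorphism $i_\Lambda$), all of which are correct.
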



\begin{thebibliography}{00}

\bibitem{DPR} K. Deicke, D. Pask and I. Raeburn, \emph{Coverings of directed graphs and crossed products of $C^*$-algebras by coactions of homogeneous spaces}, Internat. J. Math. \textbf{14} (2003), 773--789.

\bibitem{FarthingPaskEtAl:HJM09} C. Farthing, D. Pask and A. Sims, \emph{Crossed products of
    {$k$}-graph {$C^\ast$}-algebras by $\mathbb{Z}^l$}, Houston J. Math. \textbf{35} (2009),
    903--933.

\bibitem{FowlerSims:TAMS02} N.~J. Fowler  and A. Sims, \emph{Product systems over right-angled
    {A}rtin semigroups}, Trans. Amer. Math. Soc. \textbf{354} (2002), 1487--1509 (electronic).

\bibitem{Hatcher} A. Hatcher, Algebraic topology, Cambridge University Press, Cambridge,
    2002, xii+544.

\bibitem{HazelwoodRaeburnEtAl:xx11} R. Hazelwood, I. Raeburn, A. Sims and S.~B.~G. Webster,
    \emph{On some fundamental results about higher-rank graphs and their $C^*$-algebras}, Proc.
    Edinburgh Math. Soc., to appear.

\bibitem{KumjianPask:NYJM00} A. Kumjian  and D. Pask, \emph{Higher rank graph $C^*$-algebras}, New
    York J. Math. \textbf{6} (2000), 1--20.

\bibitem{KumjianPaskEtAl:JFA97} A. Kumjian, D. Pask, I. Raeburn and J. Renault, \emph{Graphs,
    groupoids, and {C}untz-{K}rieger algebras}, J. Funct. Anal. \textbf{144} (1997), 505--541.

\bibitem{KPS1} A. Kumjian, D. Pask and A. Sims, \emph{{$C^*$}-algebras associated
    to coverings of $k$-graphs}, Docmenta Math. \textbf{13} (2008), 161--205.

\bibitem{KPS2} A. Kumjian, D. Pask and A. Sims, \emph{Generalised morphisms of
    $k$-graphs: $k$-morphs}, Trans. Amer. Math. Soc. \textbf{363} (2011), 2599--2626.

\bibitem{KPS3} A. Kumjian, D. Pask and A. Sims, \emph{Homology for higher-rank
    graphs and twisted $C^*$-algebras}, preprint 2011 (arXiv:1110.1433v1 [math.OA]).

\bibitem{massey} W.~S. Massey, Algebraic topology: an introduction, Reprint of the 1967 edition,
    Graduate Texts in Mathematics, Vol. 56, Springer-Verlag, New York, 1977.

\bibitem{pqr:groupoid} D. Pask, J. Quigg and I. Raeburn, \emph{{Fundamental groupoids of
    $k$-graphs}}, New York J. Math. \textbf{10} (2004), 195--207.

\bibitem{pqr:cover} D. Pask, J. Quigg and I. Raeburn, \emph{{Coverings of $k$-graphs}}, J. Algebra
    \textbf{289} (2005), 161--191.

\bibitem{PQS} D. Pask, J. Quigg and A. Sims, \emph{Coverings of skew-products and
    crossed products by coactions}, J. Aust. Math. Soc. \textbf{86} (2009), 379--398.

\bibitem{RobertsonSteger:JRAM99} G. Robertson  and T. Steger, \emph{Affine buildings, tiling
    systems and higher rank {C}untz-{K}rieger algebras}, J. reine angew. Math. \textbf{513} (1999),
    115--144.

\bibitem{schubert} H. Schubert, {Categories}, Springer-Verlag, 1972.

\bibitem{spanier} E.~H. Spanier, Algebraic topology, McGraw-Hill Book Co., New York, 1966,
    xiv+528.

\bibitem{YeendTopGraph} T. Yeend, \emph{Groupoid models for the {$C\sp *$}-algebras of topological
    higher-rank graphs}, J. Operator Theory \textbf{57} (2007), 95--120.


\end{thebibliography}
\end{document}